\newcommand{\R}{{\mathbb R}}
\newcommand{\C}{{\mathbb C}}
\newcommand{\Z}{{\mathbb Z}}
\newcommand{\mC}{{\mathcal{C}}}
\newcommand{\vol}{\mathop{\textup{vol}}}
\newcommand{\hatbigoplus}{\mathop{\widehat{\bigoplus}}\limits}
\theoremstyle{plain}
\newtheorem{theorem}{Theorem}[section]
\newtheorem{lemma}[theorem]{Lemma}
\newtheorem{proposition}[theorem]{Proposition}
\newtheorem{corollary}[theorem]{Corollary}
\newtheorem{conjecture}[theorem]{Conjecture}
\theoremstyle{definition}
\newtheorem{exercise}[theorem]{Exercise}
\theoremstyle{remark}
\numberwithin{equation}{section}
\begin{document}

\frontmatter

\tableofcontents

\mainmatter

\LectureSeries{Packing, coding, and ground states \author{Henry Cohn}}
\address{Microsoft Research New England, One Memorial Drive, Cambridge, MA 02142}
\email{cohn@microsoft.com}

\section*{Preface}

In these lectures, we'll study simple models of materials from several
different perspectives: geometry (packing problems), information theory
(error-correcting codes), and physics (ground states of interacting particle
systems).  These perspectives each shed light on some of the same problems
and phenomena, while highlighting different techniques and connections.

One noteworthy phenomenon is the exceptional symmetry that is found in
certain special cases, and we'll examine when and why it occurs.  The
overall theme of the lectures is thus order vs.\ disorder.  How much
symmetry can we expect to see in optimal geometric structures?

The style of these lecture notes is deliberately brief and informal. See
Conway and Sloane's book \emph{Sphere packing, lattices and groups}
\cite{ConwaySloane1999} for far more information about many of the
mathematical objects we'll discuss, as well as the references cited in the
notes for omitted details.

I've included a dozen exercises for the reader, which cover things I think
it's most useful to do for oneself.  The exercises vary in difficulty, from
routine verifications to trickier computations.  There's no need to solve
them if you are willing to take a few things on faith, but I highly
recommend engaging actively with this material, and the exercises would be a
good way to get started.

These notes are based on my PCMI lectures from 2014 and were written before
Viazovska \cite{Viazovska2016} found a remarkable solution to the sphere
packing problem in $\R^8$ using linear programming bounds. The only updates
to reflect this development are a few footnotes.

\section*{Acknowledgments}

I am grateful to Matthew de Courcy-Ireland for serving as the teaching
assistant for this course and for providing feedback on the lecture notes.

\lecture{Sphere packing}

\section{Introduction}

The sphere packing problem asks for the densest packing of congruent spheres
in $\R^n$.  In other words, how can we cover the greatest fraction of space
using congruent balls that do not overlap (i.e., that have disjoint
interiors)? The \emph{density} is the fraction of space covered.  Finding
the densest sphere packing sounds simple, but it turns out to be a
surprisingly deep and subtle problem.

Before we dive into the sphere packing problem, it's worth thinking about
how to write down a rigorous definition. Although pathological packings may
not have well-defined densities, everything we could reasonably hope for is
true: we can define the optimal density by taking a suitable limit, and
there is a packing that achieves this density. Specifically, given a packing
$\mathcal{P}$, a point $x \in \R^n$, and a positive real number $r$, let
\[
\Delta_{r,x}(\mathcal{P}) = \frac{\vol (B_r(x) \cap \mathcal{P})}{\vol B_r(x)}
\]
be the fraction of the ball $B_r(x)$ of radius $r$ centered at $x$ that is
covered by $\mathcal{P}$.  If we define the \emph{optimal packing density}
$\Delta_n$ in $\R^n$ by
\[
\Delta_n = \limsup_{r \to \infty} \sup_{\mathcal{P}} \Delta_{r,0}(\mathcal{P}),
\]
then there exists a single packing $\mathcal{P}$ for which
\[
\lim_{r \to \infty} \Delta_{r,x}(\mathcal{P}) = \Delta_n
\]
uniformly for all $x \in \R^n$.  See \cite{Groemer1963} for a proof.

What are the optimal sphere packings in low dimensions?  In one dimension,
we have the interval packing problem on the line, which is trivial.  In two
dimensions, the answer is pretty clearly the hexagonal packing, with each
disk surrounded by six others:
\begin{center}
\begin{tikzpicture}
  \draw (0,0) circle (0.5);
  \draw (1,0) circle (0.5);
  \draw (-1,0) circle (0.5);
  \draw (0.5,0.866025) circle (0.5);
  \draw (0.5,-0.866025) circle (0.5);
  \draw (-0.5,0.866025) circle (0.5);
  \draw (-0.5,-0.866025) circle (0.5);
\end{tikzpicture}
\end{center}
However, proving optimality takes a genuine idea.  For example, one can show
that the Voronoi cells (the sets of points closer to each sphere center than
to the others) in the hexagonal packing are as small as possible in any
packing. See \cite{Thue1892} for the first proof of optimality,
\cite{Rogers1958,FejesToth1972} for subsequent proofs, and \cite{Hales2000}
for a particularly short proof.

In three dimensions, the sphere packing problem is much more difficult.
There is a natural guess for the solution, namely stacking hexagonal layers
as densely as possible, so that each is nestled into the gaps in the
neighboring layers.  Such a packing is known to be optimal, via an elaborate
proof \cite{Hales2005} that depends on computer calculations.  The original
proof was so long and complex that it was difficult to check carefully, but
it has recently been verified at the level of formal logic \cite{Flyspeck}.

In four or more dimensions, the optimal sphere packing density is not known,
although there are upper and lower bounds.\footnote{Viazovska
\cite{Viazovska2016} has recently solved the sphere packing problem in $\R^8$
using linear programming bounds.}

\begin{exercise}
How can hexagonal layers be stacked to form dense packings in $\R^3$?  Show
that there are an uncountable number of different ways to do so, even if you
consider two packings the same when they are related by a rigid motion of
space. Can you extend this analysis to $\R^4$?  Which packings can you get
by stacking optimal three-dimensional packings as densely as possible? How
many can you find? How dense are they? What about $\R^5$? $\R^6$?  How high
can you go?

Feel free to give up after four dimensions, but the further you go, the more
interesting phenomena you'll run into.  By $\R^{10}$, this iterated stacking
process will no longer produce the densest possible sphere packings, but
nobody knows whether it fails before that.  See \cite{ConwaySloane1995} for
more details on what happens in dimensions two through ten.
\end{exercise}

\section{Motivation}

There are several reasons why we should care about sphere packing.  One is
that it's a natural geometric problem: humanity ought to know the answer to
such a simple and natural question.

Another reason is that the problem has interesting solutions.  Sometimes
it's difficult to judge how interesting a problem is in the abstract, before
taking a look at the phenomena that occur.  Sphere packing is full of rich,
intricate structures that are themselves of intrinsic interest, and this
makes the problem far more appealing than it would have been if the answers
had been less exciting.

A third reason to care about sphere packing is that it is a toy model of
granular materials.  Of course no real material consists of identical perfect
spheres, and the sphere packing problem also neglects forces and dynamics.
However, sphere packing is at least a first step towards understanding the
density of an idealized material.  (See \cite{Loewen2000} for a statistical
physics perspective on packing.)

The most important practical reason to study sphere packing is also one of
the most surprising reasons: high-dimensional sphere packings are essential
for communication over noisy channels, as we'll spend the rest of this
section understanding.  This is really an assertion about information
theory, Claude Shannon's great discovery from his famous 1948 paper \emph{A
mathematical theory of communication} \cite{Shannon1948}.  Sphere packing
per se again deals with an idealized scenario, but it illustrates some of
the fundamental principles underlying information theory.

The setting works as follows.  Suppose we are sending messages over some
communication channel.  We will represent the signals by points in a bounded
subset of $\R^n$, say the ball of radius $R$ about the origin (the precise
subset is not so important).  In this model, each coordinate represents some
measurement used to describe the signal.  For example, for a radio signal we
could measure the amplitude at different frequencies.  There is no reason to
expect the number of measurements to be small, and realistic channels can
involve hundreds or even thousands of coordinates.  Thus, the signal space
for the channel will be high-dimensional.  Note that this dimensionality has
nothing to do with the physical space we are working in; instead, it simply
represents the number of independent measurements we make on the signals.

Each signal will be an individual transmission over the channel at a given
time, and we will send a stream of signals as time passes.  Of course, the
big difficulty with communication is noise: if we send a signal $s$, then
the received signal $r$ at the other end will generally not be exactly equal
to $s$.  Instead, it will have been perturbed by channel noise.  In a useful
channel, the noise level will be fairly low, and we can expect that $|r-s| <
\varepsilon$ for some fixed $\varepsilon$ (the noise level of the channel).

Thus, we can imagine an open \emph{error ball} of radius $\varepsilon$ about
each signal sent, which shows how it could be received after adding noise:
\begin{center}
\begin{tikzpicture}
  \draw (1,2.2) -- (1,2.3) -- (2,2.3) -- (2,2.2);
  \draw (1.5,2.3) node[below] {$\varepsilon$};
  \draw (1,1) circle (1);
  \fill[black] (1,1) circle (0.05);
  \fill[black] (1.8,1.2) circle (0.05);
  \draw (1,1) node[below] {$s$};
  \draw (1.8,1.2) node[below] {$r$};
\end{tikzpicture}
\end{center}
This is a simplistic model of noise, since we assume that the noise has no
directionality or structure, and that every perturbation up to radius
$\varepsilon$ could plausibly occur but nothing beyond that limit.  In
practice, engineers use more sophisticated noise models; for example, cell
phones have to take into account all sorts of other phenomena, such as
interference from reflected signals.  However, our basic noise model is a
good illustration of the essential principles.

How can we arrange our communications so as to remove the effects of noise?
We will build a vocabulary $S \subseteq \R^n$ of signals and only send
signals in $S$.  This is called an \emph{error-correcting code}. If two
distinct signals $s_1,s_2 \in S$ satisfy $|s_1-s_2| < 2\varepsilon$, then
the received signal could be ambiguous:
\begin{center}
\begin{tikzpicture}
  \draw (1,1) circle (1);
  \draw (2.5, 1) circle (1);
  \fill[black] (1,1) circle (0.05);
  \fill[black] (2.5,1) circle (0.05);
  \fill[black] (1.8,1.2) circle (0.05);
  \draw (1,1) node[below] {$s_1$};
  \draw (2.5,1) node[below] {$s_2$};
  \draw (1.8,1.2) node[below] {$r$};
\end{tikzpicture}
\end{center}
Therefore, we will keep all signals in $S$ at least $2\varepsilon$ apart, so
that the error balls are disjoint:
\begin{center}
\begin{tikzpicture}
  \draw (1,1) circle (1);
  \draw (3, 1) circle (1);
  \fill[black] (1,1) circle (0.05);
  \fill[black] (3,1) circle (0.05);
  \draw (1,1) node[below] {$s_1$};
  \draw (3,1) node[below] {$s_2$};
  \draw (1,2.2) -- (1,2.3) -- (3,2.3) -- (3,2.2);
  \draw (2,2.3) node[below] {$2\varepsilon$};
\end{tikzpicture}
\end{center}

This is exactly the sphere packing problem.  We want the signal set $S$ to
be as large as possible, since having more signals available increases the
rate at which we can transmit information, but the $\varepsilon$-balls about
the signals in $S$ are not allowed to overlap.  How large can we make $S$
subject to this constraint?  Recall that the only available subset of $\R^n$
in our model is the ball of radius $R$.  Thus, the question becomes how many
$\varepsilon$-balls we can pack into a ball of radius $R+\varepsilon$.  (The
radius is $R+\varepsilon$, rather than $R$, because the error balls can
stick out over the edge.)  That's a finite version of sphere packing, and we
recover the usual version in all of $\R^n$ in the limit when $R$ is much
larger than $\varepsilon$.  That limit is exactly the situation we expect,
since the channel is not very useful if $\varepsilon$ is on the same scale
as $R$.

It is remarkable that although high-dimensional packing sounds utterly
abstract and impractical, it turns out to be particularly important for
applications.  In these lectures we will focus on the theory behind sphere
packing, rather than the applications, but it is helpful to keep in mind
that a high-dimensional packing is a tool for communicating over a noisy
channel.

\section{Phenomena}

Relatively little is understood about the sphere packing problem.  One might
hope for a systematic solution that works in every dimension, but that just
doesn't seem possible.  Instead, each dimension has its own idiosyncrasies.
Getting a feeling for how $\R^8$ differs from $\R^7$ or $\R^9$ is part of
the charm of the subject, but these differences mean the packing problem is
much more subtle than it sounds.  In two or three dimensions, we can rely on
our spatial intuition and summarize the procedure as ``just do the obvious
thing,'' but there is no obvious thing to do in $\R^n$.

Good constructions are known in low dimensions, and there is little doubt
that humanity has found the optimal density through at least the first eight
dimensions.  However, we have absolutely no idea what the best
high-dimensional packings look like.  For example, we do not know whether to
expect them to be ordered and crystalline, or disordered and pseudorandom.
Many researchers expect disorder, perhaps on the grounds that this is the
default when there is no reason to expect order.  However, we lack the
theoretical tools to analyze this question.

All we know in general are upper and lower bounds for the optimal density,
and these bounds are distressingly far apart.  For example, in $\R^{36}$
they differ by a multiplicative factor of $58$: if you take the densest
known packing in $\R^{36}$, then the best we can say is that you couldn't
fit in any more than $58$ times as many spheres if you rearranged them.  The
ratio of the upper and lower bounds in $\R^n$ grows exponentially as $n \to
\infty$.

At first, this gap sounds absurd.  How could our bounds possibly be off by
an exponential factor?  One way to think about it is that volume scales
exponentially in high dimensions, because the volume of a hypercube of side
length $\ell$ in $\R^n$ is $\ell^n$, which is exponential in $n$. If you
take a packing in $\R^n$ and move the sphere centers 1\% further apart, then
you lower the density by a factor of $1.01^n$.  In low dimensions this
factor is insignificant, but in high dimensions it is enormous.  Thus, even
a little bit of uncertainty in the sphere locations translates to an
exponential uncertainty in the density.

On a scale from one to infinity, a million is small, but we know almost
nothing about sphere packing in a million dimensions.  The best we can say
is that the optimal density is at least a little larger than $2^{-1000000}$.
More generally, the following greedy argument gives a surprisingly easy
lower bound of $2^{-n}$ in $\R^n$.

Consider a \emph{saturated} packing in $\R^n$, i.e., a packing such that no
further spheres can be added without overlap.  Such packings certainly
exist, because one can obtain a saturated packing by iteratively adding
spheres as close to the origin as possible. Alternatively, there are
saturated packings on flat tori because there is room for only finitely many
spheres, and unrolling such a packing yields a saturated periodic packing in
Euclidean space.

\begin{proposition} \label{prop:greedy}
Every saturated sphere packing in $\R^n$ has density at least $2^{-n}$.
\end{proposition}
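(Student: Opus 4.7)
The plan is to exploit saturation via a doubling argument. Let $r$ be the common radius of the spheres in the packing $\mathcal{P}$, with centers forming a set $C \subseteq \R^n$. Saturation means that no open ball of radius $r$ disjoint from $\mathcal{P}$ can be inserted, which I would unpack as the statement that every point $x \in \R^n$ lies within distance $2r$ of some center $c \in C$: otherwise $B_r(x)$ would be a legal additional sphere. Equivalently, the balls of radius $2r$ around the centers cover all of $\R^n$.

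Next I would compare densities. Let $\mathcal{P}'$ denote the (overlapping) collection of balls of radius $2r$ about the same centers $C$. Since $\mathcal{P}'$ covers $\R^n$, for every $x$ and every $\rho$ we have $\vol(B_\rho(x) \cap \mathcal{P}') = \vol B_\rho(x)$, whereas on the packing side each ball of radius $2r$ has volume $2^n$ times that of the corresponding ball of radius $r$. The main quantitative step is to turn this volume ratio into a density inequality by using the definition of $\Delta_{\rho,0}(\mathcal{P})$ from the introduction.

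Concretely, for large $\rho$ I would bound
\[
\vol B_\rho(0) \le \sum_{c \in C,\, |c|\le \rho+2r} \vol B_{2r}(c) = 2^n \sum_{c \in C,\, |c|\le \rho+2r} \vol B_r(c),
\]
while
\[
\sum_{c \in C,\, |c|\le \rho+2r} \vol B_r(c) \le \vol\bigl(B_{\rho+3r}(0) \cap \mathcal{P}\bigr),
\]
since the radius-$r$ balls are disjoint and all fit inside $B_{\rho+3r}(0)$. Combining these gives
\[
\Delta_{\rho+3r,0}(\mathcal{P}) \ge 2^{-n} \cdot \frac{\vol B_\rho(0)}{\vol B_{\rho+3r}(0)}.
\]
Taking $\rho \to \infty$ the ratio on the right tends to $1$, yielding density at least $2^{-n}$.

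The only real obstacle is bookkeeping around the boundary: making sure the covering/packing estimates use matching radii so that the $2^n$ factor is exposed cleanly, and that the $\limsup$ definition of density actually captures what the finite estimates above produce. Everything else is a straightforward application of saturation plus volume comparison.
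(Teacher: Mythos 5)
Your proposal is correct and is essentially the paper's own argument: saturation forces the double-radius balls to cover $\R^n$, and the factor $2^n$ in volume under doubling yields the bound. The paper states this more informally, while you have simply made the boundary bookkeeping and the limit in the density definition explicit; no new idea is involved.
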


\begin{proof}
No point in $\R^n$ can have distance at least $2$ from all the sphere
centers in a saturated packing with unit spheres, because we could center a
new sphere at such a point without creating any overlap (see
Figure~\ref{figure:greedy}). In other words, doubling the radius of the
spheres in a saturated packing would cover space completely.  Doubling the
radius increases the volume by a factor of $2^n$, and so the original
spheres must occupy at least a $2^{-n}$ fraction of $\R^n$.  Thus, every
saturated packing has density at least $2^{-n}$.
\end{proof}

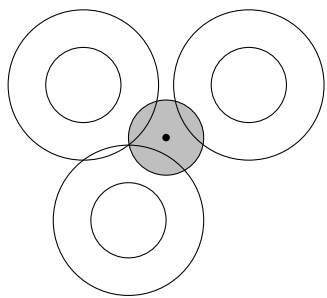
\begin{figure}
\begin{center}
\begin{tikzpicture}
\fill[black!25] (3.1,1.3) circle (0.5); \draw (3.1,1.3) circle (0.5);
\draw (2,2) circle (0.5); \draw (2,2) circle (1);
\draw (4.2,2) circle (0.5); \draw (4.2,2) circle (1);
\draw (2.6,0.2) circle (0.5); \draw (2.6,0.2) circle (1);
\fill[black] (3.1,1.3) circle (0.05);
\end{tikzpicture}
\end{center}
\caption{Any point not covered by the double-radius spheres could be used as
the center of a new sphere (shaded above).} \label{figure:greedy}
\end{figure}

In $\R^1$ there are saturated packings with density arbitrarily close to
$1/2$, but that is the only case in which Proposition~\ref{prop:greedy} is
sharp, because the bound is sharp exactly when the double-radius balls can
tile $\R^n$. One way to improve it is to prove a lower bound for how
inefficient a sphere covering in $\R^n$ must be.  For example, using the
Coxeter-Few-Rogers theorem on sphere covering \cite{CoxeterFewRogers1959}
improves the bound to $e^{-3/2} n \cdot 2^{-n}$ asymptotically.

At first $2^{-n}$ sounds like a rather weak bound, which must be far from
the truth.  However, nobody has been able to obtain any exponential
improvement to it, and perhaps it is closer to the truth than one would
guess.  In any case, it is nearly all we know regarding density lower bounds
in high dimensions.  A long sequence of improvements ground to a halt with
Ball's bound of $2(n-1) \cdot 2^{-n}$ in 1992 \cite{Ball1992}, before
progress began again nearly twenty years later.  Vance proved a lower bound
asymptotic to $6n/e \cdot 2^{-n}$ in 2011 \cite{Vance2011}, which improves
on Ball's bound because $e<3$, and Venkatesh followed that with a much
larger constant-factor improvement as well as a bound proportional to $n
\log \log n \cdot 2^{-n}$ for a certain sparse sequence of dimensions
\cite{Venkatesh2013}.  This last bound is particularly exciting because it
is the first superlinear improvement on $2^{-n}$, but on an exponential
scale all of these improvements are small.  For comparison, the best upper
bound known is $2^{-(0.5990\ldots+o(1))n}$, due to Kabatiansky and
Levenshtein \cite{KabatianskyLevenshtein1978} in 1978, with a
constant-factor improvement by Cohn and Zhao \cite{CohnZhao2014} in 2014.

Note that the greedy argument is nonconstructive, and the same is true of the
improvements mentioned above.  For large $n$, all known bounds anywhere near
$2^{-n}$ are nonconstructive, while every packing anyone has described
explicitly is terrible in high dimensions.  For example, one natural attempt
is to center spheres of radius $1/2$ at the integer lattice points $\Z^n$.
That yields a packing of density
\[
\frac{\pi^{n/2}}{(n/2)! 2^n},
\]
and the factorial in the denominator ruins the density. (Note that when $n$
is odd, $(n/2)!$ means $\Gamma(n/2+1)$.)

There are some patterns in low dimensions, but they quickly stop working.
For example, natural generalizations of the face-centered cubic packing from
$\R^3$ work well in $\R^4$ and $\R^5$, but not in higher dimensions, as we
will see in the next section. In $\R^{10}$, the best packing known is based
on a periodic arrangement with $40$ spheres in each fundamental cell
\cite[p.~140]{ConwaySloane1999}.

Crystalline packings work beautifully in low dimensions, but they become
increasingly difficult to find in high dimensions.  Perhaps they just aren't
optimal?  It's natural to speculate about amorphous packings, but nobody
really knows.  In high dimensions, we can analyze only random or typical
packings, and we simply do not know how close they are to the very best.

One philosophical quandary is that too much structure seems to make
high-dimensional packings bad, but the known lower bounds all rely on some
sort of heavy structure. Vance's and Venkatesh's techniques give the best
density, but they involve the most structure, namely lattices with
nontrivial symmetry groups acting on them.  The trade-off is that structure
seemingly hurts density but helps in analyzing packings.

The most remarkable packings are the $E_8$ root lattice in $\R^8$ and the
Leech lattice $\Lambda_{24}$ in $\R^{24}$. They are incredibly symmetrical
and dense packings of spheres, and they must be optimal, although this has
not yet been proved.\footnote{Until very recently in \cite{Viazovska2016} for
$n=8$.} What makes them exciting is that they turn out to be connected with
many areas in mathematics and physics, such as string theory, hyperbolic
geometry, and finite simple groups.  See \cite{ConwaySloane1999} and
\cite{Ebeling2013} for more information about these wonderful objects, as
well as the next section for a construction of $E_8$.

\section{Constructions}

How can we form a sphere packing?  The simplest structure we could use is a
\emph{lattice}, the integer span of $n$ linearly independent vectors in
$\R^n$.  In other words, given a basis $v_1,\dots,v_n$, we center the
spheres at the points
\[
\{a_1v_1 + a_2v_2 + \dots +a_nv_n \ |\  a_1,\dots,a_n \in
\Z\}.
\]
The \emph{packing radius} of a lattice is half the shortest nonzero vector
length, since that is the largest radius for which the spheres do not
overlap.  Given a lattice basis $v_1,\dots,v_n$, the corresponding
\emph{fundamental cell} is the parallelotope
\[
\{x_1v_1 + x_2v_2 + \dots +x_nv_n \ |\  x_1,\dots,x_n \in
[0,1)\}.
\]
The translates of the fundamental cell by lattice vectors tile space.

In a lattice packing, there is one sphere per translate of the fundamental
cell, and the density is the volume ratio of the sphere and cell. More
generally, we could form a \emph{periodic packing}, which is the union of
finitely many translates of a lattice. Equivalently, there can be several
spheres per cell, which are then translated throughout space by the lattice
vectors. There is no reason to believe that one sphere per cell is the best
choice, and indeed periodic packings offer considerably more flexibility.

One confusing issue is that physicists use the term ``lattice'' to mean
periodic packing, while they call lattices ``Bravais lattices.''  We will
stick with the standard mathematical terminology.

There is no reason to believe that periodic packings achieve the greatest
possible density.  This is an open question above three dimensions, and it
is plausibly false in high dimensions.  However, periodic packings always
come arbitrarily close to the optimal density.  To see why, consider an
optimal packing, and imagine intersecting it with a large box. If we try to
repeat the part in the box periodically through space, then the only place
overlap could occur is along the boundary of the box.  We can fix any
problems by removing the spheres next to the boundary.  Shaving the packing
in this way produces a periodic packing without overlap, at the cost of
slightly lowering the density.  The decrease in density becomes arbitrarily
small if we use a sufficiently large box, and thus periodic packings come
arbitrarily close to the optimal packing density.

By contrast, lattices probably do not approach the optimal density in high
dimensions.  The problem is that unlike periodic packings, lattices have
limited flexibility.  A lattice is completely determined by a basis, and
thus a lattice in $\R^n$ can be specified by $n^2$ parameters (in fact,
fewer if we take the quotient by rigid motions).  Quadratically many
parameters just don't give enough flexibility to fill all the gaps in an
exponential amount of space.  It's natural to guess that when $n$ is large
enough, no lattice packing in $\R^n$ is ever saturated, but this conjecture
remains out of reach.

The best sphere packings currently known are not always lattice packings
($\R^{10}$ is the first case in which lattices seem to be suboptimal), but
many good packings are.  The simplest lattice is $\Z^n$, but it is a lousy
packing when $n>1$, as discussed above.  Instead, the ``checkerboard''
packing
\[
D_n = \{(x_1,\dots,x_n) \in \Z^n \ |\  x_1+\dots+x_n \textup{ is even}\}.
\]
is better for $n \ge 3$.  In fact, $D_3$, $D_4$, and $D_5$ are the best
packings known in their dimensions, and provably the best lattice packings
(see \cite{ConwaySloane1999} for more information). However, they are
suboptimal for $n \ge 6$.

What goes wrong for $n \ge 6$ is that the holes in $D_n$ grow larger and
larger. A \emph{hole} in a lattice $\Lambda$ in $\R^n$ is a point in $\R^n$
that is a local maximum for distance from the nearest point in $\Lambda$.
There are two classes of holes in $D_n$ for $n\ge3$, represented by
$(1,0,\dots,0)$, which is at distance $1$ from $D_n$, and
$(1/2,1/2,\dots,1/2)$, which is at distance
\[
\sqrt{\left(\frac12\right)^2 + \dots +\left(\frac12\right)^2} = \sqrt{\frac n4}.
\]
More generally, the translates of these points by $D_n$ are also holes, as
are the translates of $(1/2,1/2,\dots,1/2,-1/2)$.

When $n > 4$ we call $(1,0,\dots,0)$ a \emph{shallow hole} in $D_n$ and
$(1/2,\dots,\pm 1/2)$ a \emph{deep hole}, because $\sqrt{n/4} > 1$. When $n$
is large, the depth $\sqrt{n/4}$ of a deep hole is enormous.  For
comparison, note that the spheres in the $D_n$ packing have radius
$\sqrt{2}/2$, because the nearest lattice points are
\[(0,0,\dots,0) \qquad
\textup{and} \qquad (1,1,0,\dots,0),\] at distance $\sqrt{2}$. When $n$ is
large, the holes are much larger than the spheres in the packing, and $D_n$
is not even saturated, let alone an optimal packing.

This transition occurs at dimension eight, and something wonderful happens
right at the transition point.  When $n=8$, the radius $\sqrt{n/4}$ of a
deep hole equals the distance $\sqrt{2}$ between adjacent lattice points.
Thus, we can slip another copy of $D_8$ into the holes, which doubles the
packing density, and the new spheres fit perfectly into place.  The
resulting packing is called the \emph{$E_8$ root lattice}.

This construction of $E_8$ appears asymmetric, with two different types of
spheres, namely the original spheres and the ones that were added.  However,
they are indistinguishable, because $E_8$ is a lattice and thus all the
spheres are equivalent under translation.

\begin{exercise}
Check that $E_8$ is in fact a lattice.
\end{exercise}

The $E_6$ and $E_7$ lattices are certain cross sections of $E_8$.  The
$E_6$, $E_7$, and $E_8$ lattices are the densest lattice packings in $\R^6$
through $\R^8$, and they are almost certainly the densest sphere packings.

The Leech lattice $\Lambda_{24}$ in $\R^{24}$ is similar in spirit, but with
a more elaborate construction.  See \cite{Ebeling2013} for an elegant
treatment of the Leech lattice, as well as the theory of root lattices.

The \emph{kissing number} in $\R^n$ is the greatest number of spheres that
can touch a central sphere, if they all have the same size and cannot
overlap except tangentially.  It is known to be $6$ in $\R^2$, $12$ in
$\R^3$, $24$ in $\R^4$, $240$ in $\R^8$, and $196560$ in $\R^{24}$, but is
not known in any other dimensions.  The case of $\R^2$ is easy, but $\R^3$
is not \cite{SvdW1953}, and $\R^4$ is yet more difficult \cite{Musin2008}.
Surprisingly, $\R^8$ and $\R^{24}$ are quite a bit simpler than $\R^3$ or
$\R^4$ are \cite{OdlyzkoSloane1979,Levenshtein1979}, and we will settle them
in the fourth lecture.

\begin{exercise}
What are the shortest nonzero vectors in $D_n$?  In $E_8$? This will give
optimal kissing configurations in $\R^3$, $\R^4$, and $\R^8$.
\end{exercise}

\begin{exercise}
The vertices of a \emph{cross polytope} centered at the origin in $\R^n$
consist of $n$ pairs of orthogonal vectors of the same length (it's a
generalized octahedron).  Show how to decompose the vertices of a hypercube
in $\R^4$ into two cross polytopes. Find a symmetry of the hypercube that
interchanges them.
\end{exercise}

\begin{exercise}
Show how to decompose the minimal vectors in $D_4$ into three disjoint cross
polytopes, and find a symmetry of $D_4$ that cyclically permutes these cross
polytopes.
\end{exercise}

This symmetry is called \emph{triality}, and it makes $D_4$ more symmetrical
than any of its siblings.  When $n \ne 4$, the symmetries of $D_n$ are
simply permutations and sign changes of the coordinates, while $D_4$ has all
those plus triality.

\section{Difficulty of sphere packing}

Why is the sphere packing problem hard?  There are several reasons for this.
One is that there are many local optima.  For example, among lattices in
$\R^8$, there are $2408$ local maxima for density \cite{DSV2007}.  This
number seems to grow rapidly in high dimensions, and it means the structure
of the space of packings is complicated.  There is lots of space to move in,
with complicated geometrical configurations, and it is difficult to rule out
implausible configurations rigorously.

To get a feeling for the difficulties, it is useful to think about the
geometry of high dimensions. Let's start by looking at the $n$-dimensional
cube
\[
\{ (x_1,\dots,x_n) \ |\  |x_i| \le 1 \textup{ for all } i\}
\]
of side length $2$.  It has $2^n$ vertices $(\pm1, \dots, \pm 1)$, each at
distance $\sqrt{1^2 + \dots +1^2} = \sqrt{n}$ from the center.  When
$n=10^6$, the number of vertices is absurdly large, and they are each $1000$
units from the center, despite the fact that the side length is only $2$.
These facts are amazingly different from our intuition in low dimensions. I
like to imagine the vertices as vast numbers of tiny fingers stretching out
from the center of the cube. I find it difficult to imagine that the result
is convex, but somehow a million dimensions has enough space to accommodate
such a convex body.  The reason why cubes pack much better than spheres is
that the vertices stick out far enough to fill in all the gaps.

One of the most important insights in high-dimensional geometry is the
following principle: almost all the volume of a high-dimensional body is
concentrated near its boundary.  To see why, imagine shrinking such a body
by $1\%$, leaving just a thin fringe near the boundary. The volume of the
shrunken copy is lower by a factor of $(99/100)^n$, which tends
exponentially to zero as $n \rightarrow \infty$.  Thus, virtually all of the
volume lies in that boundary fringe. There is of course nothing special
about $1\%$. The appropriate shrinkage scale in $\R^n$ to capture a constant
fraction of the volume is on the order of $1/n$, because $(1-c/n)^n$
converges to $e^{-c}$.

Boundaries, where all the interaction takes place, become increasingly
important as dimension rises.  This helps explain the difficulty of sphere
packing, because avoiding overlap is all about interaction along boundaries.

This principle reverses our intuition from low dimensions.  We typically
think of boundaries as small and exceptional, but in high dimensions there's
practically nothing but boundary, and this changes everything.  For example,
suppose we are analyzing a numerical algorithm that uses many variables and
thus operates in a high-dimensional space.  If it works efficiently
throughout a certain region except near the boundary, then that sounds good
until we realize that almost all the region is near the boundary.

\section{Finding dense packings}

In this section we'll examine how record-setting sphere packings can be
found. The high and low-dimensional cases are handled very differently in
practice. First, we'll look at the averaging techniques used in high
dimensions, and then we'll briefly discuss how computer searches can be used
in low dimensions.

The key technique used in the most recent papers in high dimensions
\cite{Vance2011,Venkatesh2013} is the Siegel mean value theorem
\cite{Siegel1945}, which lets us average suitable functions over the space
of lattices.  To carry out such an averaging we need a probability measure,
and indeed there is a canonical probability measure on lattices with fixed
determinant (i.e., fundamental cell volume). Specifically, it's the unique
$SL_n(\R)$-invariant probability measure on this space.  The existence of an
$SL_n(\R)$-invariant measure follows from general results on Haar measure
\cite{Nachbin1965}, but it takes a calculation to show that it has finite
volume and can thus be normalized to yield a probability measure.

Once we have this probability measure on lattices, we can ask various
statistical questions.  For example, what does the average pair correlation
function look like?  In other words, what can we say about the average
number of neighbors at each distance in a random lattice?  The Siegel mean
value theorem says that these pair correlations are exactly the same as for
a Poisson distribution (i.e., uniformly scattered points).  More precisely,
it says that for a sufficiently well-behaved function $f \colon \R^n \to \R$
with $n>1$, the average of
\[
\sum_{x \in \Lambda \setminus \{0\}} f(x)
\]
over all lattices $\Lambda$ of determinant $1$ equals
\[
\int_{\R^n} f(x) \, dx.
\]
Intuitively, averaging over a random lattice blurs the sum into an integral.

The reason why the Siegel mean value theorem holds is that there is enough
symmetry to rule out any other possible answer. Specifically, by linearity
the answer must be $\int f \, d\mu$ for some measure $\mu$ on $\R^n
\setminus \{0\}$ that is invariant under $SL_n(\R)$. There is only one such
measure up to scaling when $n>1$ (given a few mild hypotheses), and some
consistency checks determine the constant of proportionality.

The meta principle here is that averaging over all possible structures is
the same as having no structure at all.  Of course this is not always true.
It generally depends on invariance under the action of a large enough group,
and $SL_n(\R)$ is more than large enough.

It is not hard to deduce lower bounds for sphere packing density from the
Siegel mean value theorem.  The following proposition is far from the state
of the art, but it illustrates the basic technique.

\begin{proposition}
The sphere packing density in $\R^n$ is at least $2 \cdot 2^{-n}$.
\end{proposition}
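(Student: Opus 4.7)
The plan is to apply the Siegel mean value theorem to the indicator function of a ball centered at the origin, exploit the parity of lattice point counts in symmetric regions to gain a factor of $2$ over the naive Minkowski-type bound, and read off a lattice packing of density approaching $2 \cdot 2^{-n}$.

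Concretely, I would fix a radius $r > 0$ and let $f = \mathbf{1}_{B_r(0)}$ be the indicator of the open ball of radius $r$ about the origin. The Siegel mean value theorem, applied to the space of lattices of determinant $1$, asserts that
\[
\mathbb{E}_\Lambda \Bigl[ \,\sum_{x \in \Lambda \setminus \{0\}} f(x)\, \Bigr] \;=\; \int_{\R^n} f(x)\,dx \;=\; \vol B_r(0),
\]
so the average number of nonzero lattice points of $\Lambda$ lying in $B_r(0)$ equals $\vol B_r(0)$. The key observation is that any lattice satisfies $\Lambda = -\Lambda$, so nonzero lattice points in the symmetric region $B_r(0)$ come in pairs $\{x, -x\}$, and the count is therefore always an even nonnegative integer.

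Now choose $r$ so that $\vol B_r(0) < 2$. The average is then strictly less than $2$, and since the summand is always an even nonnegative integer, there must exist some lattice $\Lambda_0$ of determinant $1$ for which the sum is $0$. In other words, $\Lambda_0$ has no nonzero vector of length less than $r$, so its packing radius is at least $r/2$. Placing balls of radius $r/2$ at the points of $\Lambda_0$ gives a lattice packing whose density equals $\vol B_{r/2}(0) = 2^{-n}\vol B_r(0)$, since the fundamental cell has volume $1$.

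Letting $r$ increase toward the value $r_0$ for which $\vol B_{r_0}(0) = 2$, we obtain lattice packings of density arbitrarily close to $2 \cdot 2^{-n}$, which proves $\Delta_n \ge 2 \cdot 2^{-n}$. I expect the only nontrivial step to be justifying the application of the Siegel mean value theorem to the (bounded, compactly supported, but discontinuous) indicator function; this is standard and can be handled either by invoking the version of Siegel's theorem valid for Riemann-integrable functions of compact support, or by approximating $\mathbf{1}_{B_r(0)}$ from below by continuous functions and passing to the limit, with the parity/strict inequality argument applied to the approximation.
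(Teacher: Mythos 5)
Your proof is correct and follows essentially the same route as the paper: apply the Siegel mean value theorem to the indicator of a ball, use the pairing $x \leftrightarrow -x$ to see the count is always even, and conclude some lattice has no nonzero points in the ball. Your only deviation is taking $\vol B_r(0)$ strictly less than $2$ and passing to the limit, which neatly sidesteps the paper's (unjustified but true) remark that ``some lattices have many'' points when the volume is exactly $2$, at the harmless cost of only exhibiting packings of density arbitrarily close to $2\cdot 2^{-n}$.
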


\begin{proof}
Let $B$ be a ball of volume $2$ centered at the origin. For a random lattice
of determinant $1$, the expected number of nonzero lattice points in $B$ is
$\vol(B) = 2$, by applying the Siegel mean value theorem to the
characteristic function of $B$.  These lattice points come in pairs
(negatives of each other), so the number is always even. Since the average
number is $2$ and some lattices have many, other lattices must have none.
Such a lattice gives a packing with one copy of $B/2$ per unit volume and
density
\[
\mathop{\mathrm{vol}}(B/2) = \frac{\mathop{\mathrm{vol}}(B)}{2^n} = 2 \cdot 2^{-n},
\]
as desired.
\end{proof}

Vance's key idea \cite{Vance2011} builds on the extra factor of $2$ that
arises because lattice vectors occur in pairs of the same length. What if we
impose additional symmetry? The intuition is that the average number of
neighbors remains the same, but now they occur in bigger clumps, and so the
chances of no nearby neighbors go up.  Vance used lattices with quaternion
algebras acting on them, and Venkatesh \cite{Venkatesh2013} obtained even
stronger results by using cyclotomic fields.

Is this the best we can do?  Only certain symmetry groups work here: we need
a big centralizer to get enough invariance for the Siegel mean value theorem
proof, and only division algebras will do.  Cyclotomic fields are the best
division algebras for this purpose \cite{Minton2011}.  Other sorts of groups
will distort the pair correlation function away from Poisson statistics, but
that could be good or bad.  The area is wide open, and it is unclear which
sorts of constructions might help.

In low dimensions, one can obtain much better results through numerical
searches by computer.  Several recent papers
\cite{KallusElserGravel2010,MarcotteTorquato2013,Kallus2013} have taken this
approach and recovered the densest lattices known in up to $20$ dimensions.
So far the computer searches have not yielded anything new, but they seem to
be on the threshold of doing so. Can we push the calculations further, to
unknown territory? What about periodic packings?

\section{Computational problems}

Lattices may sound down to earth, but they are full of computational
difficulties.  For example, given a lattice basis it is hard to tell how
dense the corresponding sphere packing is.  The difficulty is that to
compute the density, we need to know both the volume of a fundamental cell
and the packing radius of the lattice.  The former is just the absolute
value of the determinant of a basis matrix, which is easy to compute, but
computing the packing radius is not easy.  We can see why as follows.

Recall that the packing radius is half the shortest nonzero vector length in
the lattice.  The problem is that the basis vectors may not be the shortest
vectors in the lattice, because some linear combination of them could be
much shorter.  There are exponentially many linear combinations that could
work, and there is no obvious way to search efficiently.  In fact, computing
the shortest vector length is NP-hard \cite{Ajtai1998}. In other words, many
other search problems can be reduced to it. No proof is known that it cannot
be solved efficiently (this is the famous problem of whether $\textup{P} =
\textup{NP}$), but that is almost certainly the case.

There are good algorithms for ``lattice basis reduction,'' such as the LLL
algorithm \cite{LLL1982,NguyenVallee2010}, and they produce pretty short
vectors. These vectors are generally far from optimal, but they are short
enough for some applications, particularly in relatively low dimensions.

Shortest vector problems and their relatives come up in a surprising range
of topics. One beautiful application is cryptography.  We'll briefly discuss
the Goldreich-Goldwasser-Halevi cryptosystem \cite{GGH1997}, which turns out
to have weaknesses \cite{Nguyen1999} but is a good illustration of how
lattice problems can be used to build cryptosystems.  It's a public key
cryptosystem, in which the public key is a basis for a high-dimensional
lattice, while the private key is a secret nearly orthogonal basis for the
same lattice, which makes it easy to find the nearest lattice point to any
given point in space (while this problem should be hard for anyone who does
not know the secret basis). We encode messages as lattice points.  Anyone
can encrypt a message by adding a small random perturbation, thereby moving
it off the lattice.  Decryption requires finding the nearest lattice point,
which has no obvious solution without the private key. As mentioned above,
this system is not as secure as it was intended to be \cite{Nguyen1999}, but
there are other, stronger lattice-based systems. See \cite{Peikert2015} for
a survey of recent work in this area.

Recognizing algebraic numbers is a rather different sort of application. The
number
\[
\alpha = -7.82646099323767402929927644895
\]
is a 30-digit approximation to a root of a fifth-degree polynomial equation.
Which equation is it?  Of course there are infinitely many answers, but
Occam's razor suggests we should seek the simplest one.  One interpretation
of ``simplest'' is that the coefficients should be small.

For comparison,
\[
0.1345345345345345345345345345345
\]
is clearly an approximation to $1/10 + 345/9990$, and no other answer is
nearly as satisfying.

To identify the number $\alpha$ given above, let $C = 10^{20}$ (chosen based
on the precision of $\alpha$), and look at the lattice generated by the
vectors
\begin{align*}
v_0 &= (1,0,0,0,0,0,C),\\
v_1 &= (0,1,0,0,0,0,C\alpha),\\
v_2 &= (0,0,1,0,0,0,C\alpha^2),\\
v_3 &= (0,0,0,1,0,0,C\alpha^3),\\
v_4 &= (0,0,0,0,1,0,C\alpha^4),\\
v_5 &= (0,0,0,0,0,1,C\alpha^5).
\end{align*}
The lattice vectors are given by
\[
a_0v_0 + \dots + a_5v_5 = \left(a_0,a_1,a_2,a_3,a_4,a_5,C\left(\sum_{i=0}^5 a_i \alpha^i\right)\right)
\]
with $a_0,\dots,a_5 \in \Z$.   Such a vector is small when the coefficients
$a_i$ are small and the sum $\sum_{i=0}^5 a_i \alpha^i$ is tiny, since $C$
is huge.  Thus, finding a short vector amounts to finding a polynomial
$\sum_{i=0}^5 a_i x^i$ with small coefficients such that $\alpha$ is nearly
a root.

If we search for a short vector using the LLL algorithm, we find
\[
(71, \ -5, \ 12, \ -19, \ 13, \ 2, \ 0.000004135\dots).
\]
This tells us that
\[
71 - 5\alpha + 12\alpha^2 - 19\alpha^3 + 13\alpha^4 + 2\alpha^5\approx 0.
\]
(More precisely, it is about $0.000004135/C \approx 4\cdot 10^{-26}$.) In
fact, this is the equation I used to generate $\alpha$.

More generally, we can use lattices to find integral linear relations
between any real numbers, not just powers of $\alpha$.  I find it really
remarkable that the same sort of mathematics arises in this problem as in
communication over a noisy channel.

\lecture{Symmetry and ground states}

\section{Introduction}

One of the beautiful phenomena in sphere packing is the occurrence of
spontaneous order.  There seems to be no reason to expect that an optimal
sphere packing should be highly structured, but this happens time and again,
with the precise structure being difficult to predict a priori.

These questions of order vs.\ disorder fit into a broader context.  Where do
symmetry and structure come from? L\'aszl\'o Fejes T\'oth played an
important role in formulating and attracting attention to this question.  He
drew a distinction between the \emph{systematology of the regular figures},
which amounts to classifying the possible symmetries that could occur, and
the \emph{genetics of the regular figures}, which studies when and why they
do occur.  He sought to explain the genetics of the regular figures via
optimization principles, and he made considerable progress towards this
goal.  In his vision \cite[p.~x]{FejesToth1964}, ``regular arrangements are
generated from unarranged, chaotic sets by the ordering effect of an economy
principle, in the widest sense of the word.''

Typically the optimization problem has certain symmetries, but it is far
from obvious when its solutions will inherit these symmetries. Steiner trees
are an attractive illustration of this issue. What is the minimal-length
path connecting the vertices of a square?  One obvious guess is an X, which
inherits all the symmetries of the square:
\begin{center}
\begin{tikzpicture}[x=2cm,y=2cm]
  \fill[black] (0,0) circle (0.02);
  \fill[black] (0,1) circle (0.02);
  \fill[black] (1,0) circle (0.02);
  \fill[black] (1,1) circle (0.02);
  \draw (0,0) -- (1,1);
  \draw (0,1) -- (1,0);
\end{tikzpicture}
\end{center}
However, the optimal solution turns out to look like this, or its rotation
by $90^\circ$:
\begin{center}
\begin{tikzpicture}[x=2cm,y=2cm]
  \fill[black] (0,0) circle (0.02);
  \fill[black] (0,1) circle (0.02);
  \fill[black] (1,0) circle (0.02);
  \fill[black] (1,1) circle (0.02);
  \draw (0.5,0.288675) -- (0.5,0.711325);
  \draw (0,0) -- (0.5,0.288675);
  \draw (1,0) -- (0.5,0.288675);
  \draw (0,1) -- (0.5,0.711325);
  \draw (1,1) -- (0.5,0.711325);
\end{tikzpicture}
\end{center}
There is partial symmetry breaking, in that the set of all solutions is of
course invariant under the full symmetry group of the square, but each
individual solution is invariant under just a subgroup.

This behavior occurs generically for optimization problems.  For example, in
the sphere packing problem the full symmetry group of the optimization
problem consists of all rigid motions of Euclidean space, while each optimal
sphere packing will be invariant under a much smaller subgroup, consisting
of just a discrete set of motions. The difficulty lies in predicting what
that subgroup will be. Which materials crystallize beautifully, and which
remain amorphous?

From this perspective, we would like to understand which optimization
problems admit highly symmetrical solutions, such as lattices or regular
polytopes.  Can we explain why $E_8$ and the Leech lattice are so much more
symmetrical than the best packing known in $\R^{10}$?

\section{Potential energy minimization}

There's no hope of developing a comprehensive theory of symmetry in
optimization problems, because optimization is just too broad a topic.  If
you choose an arbitrary function to optimize, then you can make literally
anything happen to the optima.  To make progress, we must restrict the class
of functions under consideration.  In this lecture we will take a look at
point particles with pairwise forces acting on them.

Given a collection of particles interacting according to some potential
function, what do they do?  For example, the Thomson problem deals with
charged particles on the surface of the unit sphere $S^2$ in $\R^3$.  Each
pair of particles at Euclidean distance $r$ has potential energy $1/r$, and
the total potential energy is the sum over all the pairs.

The simplest question is what the \emph{ground states} are.  In other words,
what are the minimal-energy configurations?  They describe the behavior of
the system at zero temperature.  This is a simple question, but the ground
states in the Thomson problem are far from obvious, and in fact not fully
known in general.

More generally, we can ask about dynamics or the behavior at positive
temperature.  These questions are more subtle, and we will generally restrict
our attention to ground states.  After all, if we can't even understand the
ground states, then there is little hope of analyzing anything more involved
than that.

Before we restrict our attention to ground states, though, it's worth
putting everything in the context of \emph{Gibbs measures}.  They are a
canonical way of putting a probability measure on the states of a system
based on nothing except their energies and the system's temperature. Of
course one can't possibly capture the behavior of every system based on so
little information, but Gibbs measures do a good job of describing a system
that is in equilibrium with a heat bath (a neighboring system that is so
much larger that its temperature is unaffected by the smaller system).

For simplicity, imagine that our system has only $n$ states, labeled $1$
through $n$, where state $i$ has energy $E_i$.  (To handle continuous
systems we can simply replace sums over states with integrals.)  If we are
given the average energy $\overline{E}$ of the system, we determine the
corresponding probability distribution on the states by finding
probabilities $p_1,\dots,p_n$ so that $\sum_i p_i E_i = \overline{E}$ and
the \emph{entropy} $\sum_i - p_i \log p_i$ is maximized, where we interpret
$0 \log 0$ as $0$.  In other words, the system is as disordered as possible,
subject to having a certain average energy.

It is not difficult to solve this optimization problem via Lagrange
multipliers, and the result is that
\[
\log(1/p_i) = \alpha + \beta E_i
\]
for some constants $\alpha$ and $\beta$. Thus, we can write
\[
p_i = \frac{e^{-\beta E_i}}{Z},
\]
where the \emph{partition function} $Z = \sum_i e^{-\beta E_i}$ ensures that
$\sum_i p_i = 1$ (it is also $e^\alpha$).  Such a probability distribution
is called a \emph{Gibbs distribution}.

In physics terms, $\beta$ turns out to be proportional to the reciprocal of
temperature.  As the temperature tends to zero, $\beta$ tends to infinity
and the Gibbs distribution becomes concentrated on the ground states.  As
the temperature tends to infinity, $\beta$ tends to zero and the Gibbs
distribution becomes equidistributed among all the states.

One question we have not yet addressed is why $\sum_i -p_i \log p_i$
deserves the name entropy.  In fact, it is an excellent measure of disorder,
essentially because it measures how surprising the probability distribution
is on average. Consider how surprised we should be by an event of
probability $p$. Call this surprise function $S(p)$, and think of it as a
measure of how much you learn from seeing this event happen. (Information
theory makes this intuition precise.)

Clearly $S$ should be a decreasing function: the higher the probability is,
the less surprising it is and the less you learn from seeing it happen.
Furthermore, we should have
\[
S(pq) = S(p) + S(q).
\]
In other words, the amount you learn from independent events is additive.
This makes good sense intuitively: if you learn one bit of information from
a coin flip, then you learn two bits from two independent coin flips.

These conditions uniquely determine the function $S$ up to a constant
factor, as $S(p) = -\log p$.  Now the entropy is $\sum_i p_i S(p_i)$, and
this quantity measures disorder by telling us how surprised we'll be on
average by the outcome.

Part of the beauty of mathematics is that concepts are connected in ways one
would never guess.  Gibbs measures are not just a construction from
statistical physics, but rather occur throughout mathematics.  For example,
Dyson recognized that they describe eigenvalues of random matrices
\cite{Dyson1962}, as follows.

Haar measure gives a canonical probability measure on the unitary group
$U(n)$.  What does a random $n \times n$ unitary matrix chosen from this
distribution look like? It has $n$ eigenvalues $z_1,\dots,z_n$ on the unit
circle, and the Weyl integral formula tells us that the probability density
function for these eigenvalues is proportional to
\[
\prod_{i < j} |z_i-z_j|^2.
\]
If we call the constant of proportionality $1/Z$, then we can rewrite this
formula as
\[
\frac{1}{Z} e^{- 2 \sum_{i<j}\log \frac{1}{|z_i-z_j|}}.
\]
In other words, the eigenvalue distribution of a random unitary matrix is a
Gibbs distribution for a certain potential function between the eigenvalues.
Specifically, they repel each other according to the potential function $x
\mapsto \log (1/|x|)$.  This function is harmonic on $\R^2 \setminus
\{(0,0)\}$, just as the Coulomb potential $x \mapsto 1/|x|$ is harmonic on
$\R^3 \setminus \{(0,0,0)\}$. Thus, the eigenvalues of a random unitary
matrix literally repel each other via electrostatic interactions in two
dimensions, with the $2$ in the exponent specifying the temperature of this
system.

\section{Families and universal optimality}

Given that we are going to study particles interacting via pairwise
potential functions, what do we hope to learn from it?  There are many
possibilities:
\begin{enumerate}
\item We may care about the ground states for their own sake, as part of
    pure mathematics or physics (see \cite{BowickGiomi2009} for many
    examples in physics).

\item We may seek a highly uniform point distribution so that we can
    discretize the ambient space.

\item We may wish to construct error-correcting codes by letting the
    codewords repel each other, so that they become well separated.

\item We may seek well-distributed sample points for numerical
    integration.
\end{enumerate}
To account for these and other goals, we will have to look at a broad range
of potential functions.

There are also many spaces we could work in, such as spheres, projective
spaces, Grassmannians, Euclidean spaces, hyperbolic spaces, and even
discrete spaces such as the Hamming cube $\{0,1\}^n$.  All of these
possibilities are interesting, but in this lecture we will focus on spheres.
(For comparison, \cite{CohnZhao2014b} and \cite{CohnKumarMinton} examine
spaces that are rather different from spheres.)

Thus, we will focus on the question of what energy minima on spheres look
like for a variety of potential functions.  As we vary the potential
function, how do the optimal configurations change?  They vary in some
family, and we would like to understand these families.  Note that our
perspective here is broader than is typical for physics, where the potential
function is usually fixed in advance.

The simplest case is that the optimal configurations never vary, at least
for reasonable potential functions, such as inverse power laws.\footnote{Of
course it is impossible for a configuration of more than one point to be a
ground state for literally every potential function, since minimizing $f$ is
the same as maximizing $-f$.  We must restrict the class of potential
functions at least somewhat.}  For example, $4$ points on $S^2$ always form
a regular tetrahedron. Abhinav Kumar and I named this property
\emph{universal optimality} \cite{CohnKumar2007}.

More generally, we can ask for a \emph{parameter count} for the family,
which is $0$ for universal optima.  As we vary the potential function (say,
among all smooth functions), what is the dimension of the space of
configurations attained as ground states?  There is little hope of proving
much about this quantity in general.  However, we can try to estimate it
from numerical data \cite{BBCGKS2009}.  These parameter counts can be
difficult to predict, because they take into account how well the number of
points accommodates different sorts of symmetry. For example, $44$ points on
$S^2$ vary in a one-parameter family near the putative Coulomb minimizer
when we perturb the potential function, while $43$ points vary in a
$21$-parameter family. See Figure~\ref{fig:43vs44} for an illustration. What
this means is that the $44$-point configuration is nearly determined by
symmetry, with just one degree of freedom remaining to be specified by the
choice of potential function, while the $43$-point configuration is far more
complex.

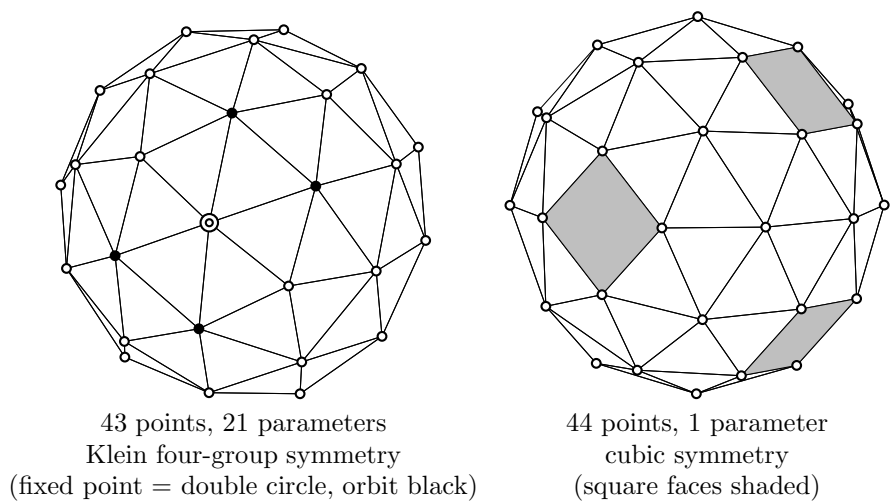
\begin{figure}
\begin{center}
\begin{tabular}{cc}
\begin{tikzpicture}[scale=2.2,x={(-0.8cm,-0.4cm)},y={(0.8cm,-0.4cm)},z={(0cm,1cm)}]
\coordinate (1) at (0.989071,0.0234765,0.145558);
\coordinate (2) at (-0.00739708,0.998991,0.0442901);
\coordinate (3) at (-0.723759,0.533184,-0.438050);
\coordinate (4) at (-0.760871,-0.567277,0.315074);
\coordinate (5) at (0.287349,-0.792474,0.537974);
\coordinate (6) at (0.236588,0.786425,0.570580);
\coordinate (7) at (0.528138,-0.848221,0.0398942);
\coordinate (8) at (-0.125044,-0.552897,0.823814);
\coordinate (9) at (-0.617477,0.276308,0.736462);
\coordinate (10) at (-0.931914,-0.0523755,0.358879);
\coordinate (11) at (0.114867,0.546171,-0.829760);
\coordinate (12) at (0.383368,-0.319466,0.866586);
\coordinate (13) at (0.0180541,-0.999506,0.0257146);
\coordinate (14) at (-0.817765,0.506753,0.272876);
\coordinate (15) at (0.381262,0.298286,0.875022);
\coordinate (16) at (-0.391710,0.383054,-0.836560);
\coordinate (17) at (-0.585394,-0.277278,0.761860);
\coordinate (18) at (-0.199586,0.850467,-0.486695);
\coordinate (19) at (-0.897891,-0.391573,-0.201154);
\coordinate (20) at (0.379951,0.823859,-0.420587);
\coordinate (21) at (0.770716,-0.494599,0.401708);
\coordinate (22) at (0.612314,-0.279720,-0.739478);
\coordinate (23) at (0.621456,0.364954,-0.693254);
\coordinate (24) at (0.221543,0.0368488,-0.974454);
\coordinate (25) at (-0.773294,-0.0290597,-0.633381);
\coordinate (26) at (-0.137758,0.494432,0.858230);
\coordinate (27) at (-0.313995,-0.849376,0.424225);
\coordinate (28) at (-0.534412,-0.835058,-0.130696);
\coordinate (29) at (0.914358,0.0187369,-0.404472);
\coordinate (30) at (-0.976923,0.161689,-0.139568);
\coordinate (31) at (-0.535343,-0.555492,-0.636268);
\coordinate (32) at (0.766147,-0.0111388,0.642569);
\coordinate (33) at (0.453914,-0.747182,-0.485470);
\coordinate (34) at (0.509680,0.854191,0.102881);
\coordinate (35) at (-0.0935043,-0.869015,-0.485870);
\coordinate (36) at (0.867555,-0.465392,-0.175383);
\coordinate (37) at (0.0805982,-0.510164,-0.856292);
\coordinate (38) at (0.844396,0.511616,-0.158885);
\coordinate (39) at (-0.352880,0.805483,0.476101);
\coordinate (40) at (-0.106782,-0.0248633,0.993972);
\coordinate (41) at (-0.309556,-0.140049,-0.940511);
\coordinate (42) at (0.749946,0.495556,0.438184);
\coordinate (43) at (-0.538315,0.841496,-0.0458407);
\draw (1)--(21);
\draw (1)--(32);
\draw (32)--(21);
\draw (1)--(21);
\draw (1)--(36);
\draw (36)--(21);
\draw (1)--(29);
\draw (1)--(36);
\draw (36)--(29);
\draw (1)--(29);
\draw (1)--(38);
\draw (38)--(29);
\draw (1)--(32);
\draw (1)--(42);
\draw (42)--(32);
\draw (1)--(38);
\draw (1)--(42);
\draw (42)--(38);
\draw (2)--(6);
\draw (2)--(34);
\draw (34)--(6);
\draw (2)--(6);
\draw (2)--(39);
\draw (39)--(6);
\draw (2)--(18);
\draw (2)--(20);
\draw (20)--(18);
\draw (2)--(18);
\draw (2)--(43);
\draw (43)--(18);
\draw (2)--(20);
\draw (2)--(34);
\draw (34)--(20);
\draw (2)--(39);
\draw (2)--(43);
\draw (43)--(39);
\draw (43)--(14);
\draw (43)--(18);
\draw (5)--(7);
\draw (5)--(7);
\draw (5)--(21);
\draw (21)--(7);
\draw (5)--(8);
\draw (5)--(12);
\draw (12)--(8);
\draw (5)--(8);
\draw (5)--(12);
\draw (5)--(21);
\draw (21)--(12);
\draw (6)--(15);
\draw (6)--(26);
\draw (26)--(15);
\draw (6)--(15);
\draw (6)--(42);
\draw (42)--(15);
\draw (6)--(26);
\draw (6)--(39);
\draw (39)--(26);
\draw (6)--(34);
\draw (6)--(42);
\draw (42)--(34);
\draw (7)--(21);
\draw (7)--(36);
\draw (36)--(21);
\draw (7)--(36);
\draw (8)--(12);
\draw (8)--(40);
\draw (40)--(12);
\draw (8)--(17);
\draw (8)--(17);
\draw (8)--(40);
\draw (40)--(17);
\draw (9)--(14);
\draw (9)--(17);
\draw (9)--(14);
\draw (9)--(39);
\draw (39)--(14);
\draw (9)--(17);
\draw (9)--(40);
\draw (40)--(17);
\draw (9)--(26);
\draw (9)--(39);
\draw (39)--(26);
\draw (9)--(26);
\draw (9)--(40);
\draw (40)--(26);
\draw (11)--(18);
\draw (11)--(18);
\draw (11)--(20);
\draw (20)--(18);
\draw (11)--(20);
\draw (11)--(23);
\draw (23)--(20);
\draw (11)--(23);
\draw (12)--(15);
\draw (12)--(32);
\draw (32)--(15);
\draw (12)--(15);
\draw (12)--(40);
\draw (40)--(15);
\draw (12)--(21);
\draw (12)--(32);
\draw (32)--(21);
\draw (14)--(39);
\draw (14)--(43);
\draw (43)--(39);
\draw (15)--(26);
\draw (15)--(40);
\draw (40)--(26);
\draw (15)--(32);
\draw (15)--(42);
\draw (42)--(32);
\draw (20)--(23);
\draw (20)--(38);
\draw (38)--(23);
\draw (20)--(34);
\draw (20)--(38);
\draw (38)--(34);
\draw (22)--(23);
\draw (22)--(23);
\draw (22)--(29);
\draw (29)--(23);
\draw (22)--(29);
\draw (22)--(36);
\draw (36)--(29);
\draw (22)--(36);
\draw (23)--(29);
\draw (23)--(38);
\draw (38)--(29);
\draw (34)--(38);
\draw (34)--(42);
\draw (42)--(38);
\fill[fill=black,draw=black,thick] (1) circle (0.75pt);
\fill[fill=white,draw=black,thick] (2) circle (0.75pt);
\fill[fill=white,draw=black,thick] (5) circle (0.75pt);
\fill[fill=black,draw=black,thick] (6) circle (0.75pt);
\fill[fill=white,draw=black,thick] (7) circle (0.75pt);
\fill[fill=white,draw=black,thick] (8) circle (0.75pt);
\fill[fill=white,draw=black,thick] (9) circle (0.75pt);
\fill[fill=white,draw=black,thick] (11) circle (0.75pt);
\fill[fill=white,draw=black,thick] (12) circle (0.75pt);
\fill[fill=white,draw=black,thick] (14) circle (0.75pt);
\fill[fill=black,draw=black,thick] (15) circle (0.75pt);
\fill[fill=white,draw=black,thick] (17) circle (0.75pt);
\fill[fill=white,draw=black,thick] (18) circle (0.75pt);
\fill[fill=white,draw=black,thick] (20) circle (0.75pt);
\fill[fill=white,draw=black,thick] (21) circle (0.75pt);
\fill[fill=white,draw=black,thick] (22) circle (0.75pt);
\fill[fill=white,draw=black,thick] (23) circle (0.75pt);
\fill[fill=white,draw=black,thick] (26) circle (0.75pt);
\fill[fill=white,draw=black,thick] (29) circle (0.75pt);
\fill[fill=white,draw=black,thick] (32) circle (0.75pt);
\fill[fill=white,draw=black,thick] (34) circle (0.75pt);
\fill[fill=white,draw=black,thick] (36) circle (0.75pt);
\fill[fill=black,draw=black,thick] (38) circle (0.75pt);
\fill[fill=white,draw=black,thick] (39) circle (0.75pt);
\fill[fill=white,draw=black,thick] (40) circle (0.75pt);
\fill[fill=white,draw=black,thick] (42) circle (1.5pt);
\fill[fill=white,draw=black,thick] (42) circle (0.6pt);
\fill[fill=white,draw=black,thick] (43) circle (0.75pt);
\end{tikzpicture}
&
\begin{tikzpicture}[scale=2.2,x={(-0.8cm,-0.4cm)},y={(0.8cm,-0.4cm)},z={(0cm,1cm)}]
\coordinate (1) at (-0.272589,-0.267387,0.924229);
\coordinate (2) at (0.680164,0.721121,-0.131760);
\coordinate (3) at (0.951792,-0.215387,0.218405);
\coordinate (4) at (-0.349122,-0.867645,-0.353986);
\coordinate (5) at (-0.181448,-0.969050,0.167388);
\coordinate (6) at (0.0863683,-0.257319,-0.962459);
\coordinate (7) at (-0.761582,-0.309941,0.569148);
\coordinate (8) at (-0.0711206,0.721301,0.688960);
\coordinate (9) at (-0.574151,0.635797,0.515861);
\coordinate (10) at (0.373911,0.708813,-0.598143);
\coordinate (11) at (-0.373911,-0.708813,0.598143);
\coordinate (12) at (-0.129120,0.623309,-0.771242);
\coordinate (13) at (-0.912733,0.229311,0.338134);
\coordinate (14) at (-0.321582,0.883546,-0.340488);
\coordinate (15) at (0.321582,-0.883546,0.340488);
\coordinate (16) at (0.272589,0.267387,-0.924229);
\coordinate (17) at (-0.759329,-0.0448494,-0.649159);
\coordinate (18) at (0.129120,-0.623309,0.771242);
\coordinate (19) at (-0.951792,0.215387,-0.218405);
\coordinate (20) at (0.349122,0.867645,0.353986);
\coordinate (21) at (0.759329,0.0448494,0.649159);
\coordinate (22) at (0.407575,0.453734,0.792469);
\coordinate (23) at (-0.967039,-0.248595,0.0550935);
\coordinate (24) at (0.574151,-0.635797,-0.515861);
\coordinate (25) at (-0.589399,0.171814,0.789360);
\coordinate (26) at (-0.0863683,0.257319,0.962459);
\coordinate (27) at (0.967039,0.248595,-0.0550935);
\coordinate (28) at (0.728905,-0.684404,0.0169775);
\coordinate (29) at (-0.407575,-0.453734,-0.792469);
\coordinate (30) at (-0.774577,-0.508832,-0.375661);
\coordinate (31) at (0.316445,-0.126700,0.940111);
\coordinate (32) at (0.181448,0.969050,-0.167388);
\coordinate (33) at (0.0711206,-0.721301,-0.688960);
\coordinate (34) at (-0.728905,0.684404,-0.0169775);
\coordinate (35) at (-0.680164,-0.721121,0.131760);
\coordinate (36) at (-0.316445,0.126700,-0.940111);
\coordinate (37) at (0.761582,0.309941,-0.569148);
\coordinate (38) at (0.640144,-0.496698,0.586095);
\coordinate (39) at (0.912733,-0.229311,-0.338134);
\coordinate (40) at (0.589399,-0.171814,-0.789360);
\coordinate (41) at (0.232569,-0.950432,-0.206375);
\coordinate (42) at (-0.232569,0.950432,0.206375);
\coordinate (43) at (-0.640144,0.496698,-0.586095);
\coordinate (44) at (0.774577,0.508832,0.375661);
\draw (1)--(25);
\draw (1)--(18);
\draw (1)--(18);
\draw (1)--(31);
\draw (31)--(18);
\draw (1)--(25);
\draw (1)--(26);
\draw (26)--(25);
\draw (1)--(26);
\draw (1)--(31);
\draw (31)--(26);
\draw (2)--(10);
\draw (2)--(32);
\draw (32)--(10);
\draw (2)--(10);
\draw (2)--(37);
\draw (37)--(10);
\draw (2)--(20);
\draw (2)--(32);
\draw (32)--(20);
\draw (2)--(20);
\draw (2)--(44);
\draw (44)--(20);
\draw (2)--(27);
\draw (2)--(37);
\draw (37)--(27);
\draw (2)--(27);
\draw (2)--(44);
\draw (44)--(27);
\draw (3)--(21);
\draw (3)--(38);
\draw (38)--(21);
\draw (3)--(27);
\draw (3)--(39);
\draw (39)--(27);
\draw (3)--(28);
\draw (3)--(38);
\draw (38)--(28);
\draw (3)--(28);
\draw (3)--(39);
\draw (39)--(28);
\draw (40)--(16);
\draw (25)--(13);
\draw (8)--(9);
\draw (8)--(42);
\draw (42)--(9);
\draw (8)--(20);
\draw (8)--(22);
\draw (22)--(20);
\draw (8)--(20);
\draw (8)--(42);
\draw (42)--(20);
\draw (8)--(22);
\draw (8)--(26);
\draw (26)--(22);
\draw (9)--(13);
\draw (9)--(25);
\draw (25)--(13);
\draw (9)--(13);
\draw (9)--(34);
\draw (34)--(13);
\draw (9)--(34);
\draw (9)--(42);
\draw (42)--(34);
\draw (10)--(12);
\draw (10)--(16);
\draw (16)--(12);
\draw (10)--(16);
\draw (10)--(37);
\draw (37)--(16);
\draw (12)--(14);
\draw (12)--(16);
\draw (13)--(34);
\draw (14)--(32);
\draw (14)--(42);
\draw (42)--(32);
\draw (14)--(34);
\draw (14)--(42);
\draw (42)--(34);
\draw (14)--(34);
\draw (15)--(18);
\draw (15)--(38);
\draw (38)--(18);
\draw (15)--(28);
\draw (15)--(38);
\draw (38)--(28);
\draw (15)--(28);
\draw (16)--(37);
\draw (16)--(40);
\draw (40)--(37);
\draw (18)--(31);
\draw (18)--(38);
\draw (38)--(31);
\draw (20)--(22);
\draw (20)--(44);
\draw (44)--(22);
\draw (20)--(32);
\draw (20)--(42);
\draw (42)--(32);
\draw (21)--(22);
\draw (21)--(31);
\draw (31)--(22);
\draw (21)--(22);
\draw (21)--(44);
\draw (44)--(22);
\draw (21)--(31);
\draw (21)--(38);
\draw (38)--(31);
\draw (22)--(26);
\draw (22)--(31);
\draw (31)--(26);
\draw (39)--(28);
\draw (40)--(39);
\draw (27)--(37);
\draw (27)--(39);
\draw (39)--(37);
\draw (37)--(39);
\draw (37)--(40);
\draw (40)--(39);
\draw[fill=black!25] (3)--(27)--(44)--(21)--(3);
\draw[fill=black!25] (8)--(9)--(25)--(26)--(8);
\draw[fill=black!25] (10)--(12)--(14)--(32)--(10);
\fill[fill=white,draw=black,thick] (1) circle (0.75pt);
\fill[fill=white,draw=black,thick] (2) circle (0.75pt);
\fill[fill=white,draw=black,thick] (3) circle (0.75pt);
\fill[fill=white,draw=black,thick] (8) circle (0.75pt);
\fill[fill=white,draw=black,thick] (9) circle (0.75pt);
\fill[fill=white,draw=black,thick] (10) circle (0.75pt);
\fill[fill=white,draw=black,thick] (12) circle (0.75pt);
\fill[fill=white,draw=black,thick] (13) circle (0.75pt);
\fill[fill=white,draw=black,thick] (14) circle (0.75pt);
\fill[fill=white,draw=black,thick] (15) circle (0.75pt);
\fill[fill=white,draw=black,thick] (16) circle (0.75pt);
\fill[fill=white,draw=black,thick] (18) circle (0.75pt);
\fill[fill=white,draw=black,thick] (20) circle (0.75pt);
\fill[fill=white,draw=black,thick] (21) circle (0.75pt);
\fill[fill=white,draw=black,thick] (22) circle (0.75pt);
\fill[fill=white,draw=black,thick] (25) circle (0.75pt);
\fill[fill=white,draw=black,thick] (26) circle (0.75pt);
\fill[fill=white,draw=black,thick] (27) circle (0.75pt);
\fill[fill=white,draw=black,thick] (28) circle (0.75pt);
\fill[fill=white,draw=black,thick] (31) circle (0.75pt);
\fill[fill=white,draw=black,thick] (32) circle (0.75pt);
\fill[fill=white,draw=black,thick] (34) circle (0.75pt);
\fill[fill=white,draw=black,thick] (37) circle (0.75pt);
\fill[fill=white,draw=black,thick] (38) circle (0.75pt);
\fill[fill=white,draw=black,thick] (39) circle (0.75pt);
\fill[fill=white,draw=black,thick] (40) circle (0.75pt);
\fill[fill=white,draw=black,thick] (42) circle (0.75pt);
\fill[fill=white,draw=black,thick] (44) circle (0.75pt);
\end{tikzpicture}\\
$43$ points, $21$ parameters& $44$ points, $1$ parameter\\
Klein four-group symmetry & cubic symmetry\\
(fixed point = double circle, orbit black) & (square faces shaded)
\end{tabular}
\end{center}
\caption{Putative ground states for Coulomb energy on $S^2$, and the number
of parameters for the families they lie in.} \label{fig:43vs44}
\end{figure}

To give a precise definition of universal optimality, we must specify the
class of potential functions.  For a finite subset $\mathcal{C} \subset
S^{n-1}$ and a function $f \colon (0,4] \to \R$, we define the \emph{energy
of $\mathcal{C}$ with respect to the potential function $f$} to be
\[
E_f(\mathcal{C}) = \frac{1}{2}
\sum_{\substack{x,y \in \mathcal{C}\\ x \ne y}}
f\big(|x-y|^2\big).
\]
The factor of $1/2$ simply corrects for counting each pair twice and is not
important. The use of squared Euclidean distance similarly doesn't matter in
principle, since the squaring could be incorporated into the potential
function, but it turns out to be a surprisingly useful convention.

A function $f$ is \emph{completely monotonic} if it is infinitely
differentiable and
\[
(-1)^k f^{(k)} \ge 0
\]
for all $k \ge 0$ (i.e., its derivatives alternate in sign, as in inverse
power laws).  We say $\mathcal{C}$ is \emph{universally optimal} if it
minimizes $E_f(\mathcal{C})$ for all completely monotonic $f$, compared with
all $|\mathcal{C}|$-point configurations on $S^{n-1}$.

It's not obvious that completely monotonic functions are the right class of
functions to use, but they turn out to be.  The fact that $f$ is decreasing
means the force is repulsive, and convexity means the force grows stronger
at short distances.  Complete monotonicity is a natural generalization of
these conditions, and the results and examples in \cite{CohnKumar2007} give
evidence that it is the right generalization (see pages 101 and 107--108).
Note in particular that inverse power laws are completely monotonic, so
universal optima must minimize energy for all inverse power laws.

In the circle $S^1$, there is a universal optimum of each size, namely the
regular polygon.  This is not as straightforward to prove as it sounds, but
it follows from Theorem~1.2 in \cite{CohnKumar2007}, which we will state as
Theorem~\ref{thm:cohnkumar2007} in the fourth lecture. In $S^2$, the
complete list of universal optima with more than one point is as follows:
\begin{enumerate}
\item Two antipodal points ($2$ points)

\item Equilateral triangle on equator ($3$ points)

\item Regular tetrahedron ($4$ points)

\item Regular octahedron ($6$ points)

\item Regular icosahedron ($12$ points)
\end{enumerate}
See Figure~\ref{fig:platonic}. Universal optimality again follows from
Theorem~1.2 in \cite{CohnKumar2007} (after special cases were proved in
\cite{Yudin1993,KolushovYudin1994,Andreev1996,KolushovYudin1997,Andreev1997}),
while completeness follows from a theorem of Leech in \cite{Leech1957}.

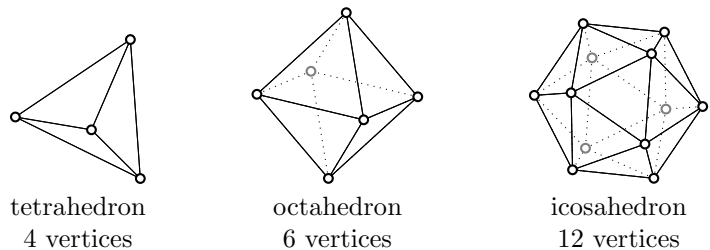
\begin{figure}
\begin{center}
\begin{tabular}{ccccc}
\begin{tikzpicture}[scale=1,x={(-0.8cm,-0.4cm)},y={(0.8cm,-0.4cm)},z={(0cm,1cm)}]
\coordinate (1) at (-0.719912,-0.121578,0.683334);
\coordinate (2) at (0.430908,-0.882004,-0.190756);
\coordinate (3) at (0.686543,0.637867,0.348977);
\coordinate (4) at (-0.397539,0.365715,-0.841555);
\draw (1)--(2);
\draw (1)--(3);
\draw (3)--(2);
\draw (1)--(2);
\draw (1)--(4);
\draw (4)--(2);
\draw (1)--(3);
\draw (1)--(4);
\draw (4)--(3);
\draw (2)--(3);
\draw (2)--(4);
\draw (4)--(3);
\fill[fill=white,draw=black,thick] (1) circle (1.65pt);
\fill[fill=white,draw=black,thick] (2) circle (1.65pt);
\fill[fill=white,draw=black,thick] (3) circle (1.65pt);
\fill[fill=white,draw=black,thick] (4) circle (1.65pt);
\end{tikzpicture}
&
\qquad \quad &
\begin{tikzpicture}[scale=1,x={(-0.8cm,-0.4cm)},y={(0.8cm,-0.4cm)},z={(0cm,1cm)}]
\coordinate (1) at (-0.462126,0.874345,0.148193);
\coordinate (2) at (-0.568973,-0.420506,0.706714);
\coordinate (3) at (0.680228,0.242273,0.691805);
\coordinate (4) at (0.462126,-0.874345,-0.148193);
\coordinate (5) at (-0.680228,-0.242273,-0.691805);
\coordinate (6) at (0.568973,0.420506,-0.706714);
\draw[dotted] (1)--(2);
\draw[dotted] (1)--(3);
\draw[dotted] (2)--(3);
\draw[dotted] (1)--(5);
\draw[dotted] (2)--(5);
\draw[dotted] (1)--(6);
\draw[dotted] (3)--(6);
\draw[dotted] (5)--(6);
\draw[dotted] (2)--(4);
\draw[dotted] (3)--(4);
\draw[dotted] (4)--(5);
\draw[dotted] (4)--(6);
\draw (1)--(2);
\draw (1)--(2);
\draw (1)--(5);
\draw (5)--(2);
\draw (1)--(6);
\draw (1)--(5);
\draw (1)--(6);
\draw (6)--(5);
\draw (2)--(4);
\draw (2)--(4);
\draw (2)--(5);
\draw (5)--(4);
\draw (6)--(4);
\draw (4)--(5);
\draw (4)--(6);
\draw (6)--(5);
\fill[fill=white,draw=black,thick] (1) circle (1.65pt);
\fill[fill=white,draw=black,thick] (2) circle (1.65pt);
\fill[fill=white,draw=black!50,thick] (3) circle (1.65pt);
\fill[fill=white,draw=black,thick] (4) circle (1.65pt);
\fill[fill=white,draw=black,thick] (5) circle (1.65pt);
\fill[fill=white,draw=black,thick] (6) circle (1.65pt);
\end{tikzpicture}
&
\qquad \quad &
\begin{tikzpicture}[scale=1,x={(-0.8cm,-0.4cm)},y={(0.8cm,-0.4cm)},z={(0cm,1cm)}]
\coordinate (1) at (0.451735,0.891412,-0.0363343);
\coordinate (2) at (-0.451735,-0.891412,0.0363343);
\coordinate (3) at (0.595066,-0.803624,-0.00920844);
\coordinate (4) at (-0.595066,0.803624,0.00920844);
\coordinate (5) at (0.0798685,-0.507188,0.858127);
\coordinate (6) at (-0.868867,-0.0812855,-0.488326);
\coordinate (7) at (-0.00871491,0.540402,0.841362);
\coordinate (8) at (0.868867,0.0812855,0.488326);
\coordinate (9) at (-0.824892,-0.0607582,0.562016);
\coordinate (10) at (-0.0798685,0.507188,-0.858127);
\coordinate (11) at (0.824892,0.0607582,-0.562016);
\coordinate (12) at (0.00871491,-0.540402,-0.841362);
\draw[dotted] (1)--(4);
\draw[dotted] (1)--(7);
\draw[dotted] (4)--(7);
\draw[dotted] (1)--(10);
\draw[dotted] (4)--(10);
\draw[dotted] (1)--(8);
\draw[dotted] (7)--(8);
\draw[dotted] (1)--(11);
\draw[dotted] (8)--(11);
\draw[dotted] (10)--(11);
\draw[dotted] (2)--(3);
\draw[dotted] (2)--(5);
\draw[dotted] (3)--(5);
\draw[dotted] (2)--(12);
\draw[dotted] (3)--(12);
\draw[dotted] (2)--(9);
\draw[dotted] (5)--(9);
\draw[dotted] (2)--(6);
\draw[dotted] (6)--(9);
\draw[dotted] (6)--(12);
\draw[dotted] (3)--(8);
\draw[dotted] (5)--(8);
\draw[dotted] (3)--(11);
\draw[dotted] (11)--(12);
\draw[dotted] (4)--(6);
\draw[dotted] (4)--(9);
\draw[dotted] (6)--(10);
\draw[dotted] (7)--(9);
\draw[dotted] (5)--(7);
\draw[dotted] (10)--(12);
\draw (1)--(4);
\draw (1)--(7);
\draw (7)--(4);
\draw (1)--(4);
\draw (1)--(10);
\draw (10)--(4);
\draw (1)--(7);
\draw (1)--(8);
\draw (8)--(7);
\draw (1)--(8);
\draw (1)--(11);
\draw (11)--(8);
\draw (1)--(10);
\draw (1)--(11);
\draw (11)--(10);
\draw (5)--(3);
\draw (9)--(5);
\draw (3)--(5);
\draw (3)--(8);
\draw (8)--(5);
\draw (3)--(8);
\draw (3)--(11);
\draw (11)--(8);
\draw (3)--(11);
\draw (4)--(9);
\draw (4)--(10);
\draw (4)--(7);
\draw (4)--(9);
\draw (9)--(7);
\draw (5)--(7);
\draw (5)--(8);
\draw (8)--(7);
\draw (5)--(7);
\draw (5)--(9);
\draw (9)--(7);
\draw (10)--(11);
\fill[fill=white,draw=black,thick] (1) circle (1.65pt);
\fill[fill=white,draw=black!50,thick] (2) circle (1.65pt);
\fill[fill=white,draw=black,thick] (3) circle (1.65pt);
\fill[fill=white,draw=black,thick] (4) circle (1.65pt);
\fill[fill=white,draw=black,thick] (5) circle (1.65pt);
\fill[fill=white,draw=black!50,thick] (6) circle (1.65pt);
\fill[fill=white,draw=black,thick] (7) circle (1.65pt);
\fill[fill=white,draw=black,thick] (8) circle (1.65pt);
\fill[fill=white,draw=black,thick] (9) circle (1.65pt);
\fill[fill=white,draw=black,thick] (10) circle (1.65pt);
\fill[fill=white,draw=black,thick] (11) circle (1.65pt);
\fill[fill=white,draw=black!50,thick] (12) circle (1.65pt);
\end{tikzpicture}\\
tetrahedron & & octahedron & & icosahedron\\
4 vertices & & 6 vertices & & 12 vertices
\end{tabular}
\end{center}
\caption{Platonic solids whose vertices form universally optimal codes.}
\label{fig:platonic}
\end{figure}

The cube and regular dodecahedron are conspicuously missing from this list.
The cube cannot be universally optimal, because rotating one face moves its
corners further from those of the opposite face, and the dodecahedron fails
similarly.  Square and pentagonal faces are not particularly favorable
shapes for energy minimization, although cubes and dodecahedra can occur for
unusual potential functions \cite{CohnKumar2009b}.

Five points are the first case without universal optimality, and they are
surprisingly subtle. There are two natural ways to arrange the particles: we
could include the north and south poles together with an equilateral
triangle on the equator (a \emph{triangular bipyramid}), or the north pole
together with a square at constant latitude in the southern hemisphere (a
\emph{square pyramid}). The square pyramid lies in a one-parameter family,
where the latitude of the square depends on the choice of potential
function.  By contrast, the triangular bipyramid is in equilibrium for every
potential function, but it becomes an unstable equilibrium for steep inverse
power laws.

\begin{conjecture}
For each completely monotonic potential function, either the triangular
bipyramid or a square pyramid minimizes energy for $5$ points in $S^2$.
\end{conjecture}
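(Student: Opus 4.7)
The plan is to reduce the problem to a one-parameter family of exponential-type kernels via Bernstein's theorem and then classify the minimizers by symmetry. Every completely monotonic $f$ on $(0,4]$ extends to a completely monotonic function on $[0,\infty)$ and so admits a Bernstein representation
\[
f(t) = \int_0^\infty e^{-st}\,d\mu(s)
\]
for some positive Borel measure $\mu$ on $[0,\infty)$. Since $E_f(\mathcal{C})$ depends linearly on $f$, it suffices to establish the analogous statement for each Gaussian-type kernel $f_s(t) = e^{-st}$ with $s > 0$: namely, that the global minimizer of $E_{f_s}$ on $5$-point subsets of $S^2$ is either the triangular bipyramid or some square pyramid.

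For each fixed $s > 0$, I would next prove a symmetry-reduction lemma: every global minimizer of $E_{f_s}$ is invariant under a nontrivial subgroup of $O(3)$. The idea is that $5$ particles on $S^2$ live in an essentially $7$-dimensional moduli space after quotienting by $SO(3)$, and the equilibrium conditions $\sum_{y\ne x} f_s'(|x-y|^2)(x-y) \parallel x$ together with a Hessian/Morse analysis should rule out asymmetric critical points as global minima. A case analysis of the $O(3)$-point stabilizers that can act on $5$ points then leaves essentially two topological types: a triangular bipyramid with $D_{3h}$ symmetry and a square pyramid with $C_{4v}$ symmetry.

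With the topological type fixed, the remaining optimization is elementary. The triangular bipyramid is rigid, so $E_{f_s}(\mathrm{TBP})$ is an explicit function of $s$. The square pyramid has one internal parameter, the southern latitude $\theta$, and the equation $\partial E_{f_s}/\partial\theta = 0$ determines a smooth function $\theta^{*}(s)$ giving the optimal latitude at each $s$. Comparing $E_{f_s}(\mathrm{TBP})$ with $E_{f_s}(\mathrm{SP}_{\theta^{*}(s)})$ yields a single crossover value $s^{*}$ above which the square pyramid strictly beats the triangular bipyramid, matching the description in the lecture of the triangular bipyramid being destabilized by steep inverse power laws.

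The hard part is the symmetry-reduction lemma. A promising route is the linear programming method of \cite{CohnKumar2007}: for each $s$, construct an auxiliary polynomial $h_s$ with nonnegative ultraspherical coefficients satisfying $h_s(u) \le f_s(2-2u)$ on $[-1,1]$, producing a dual certificate that no $5$-point configuration can have energy below $\min(E_{f_s}(\mathrm{TBP}),\, E_{f_s}(\mathrm{SP}_{\theta^{*}(s)}))$. Producing such $h_s$ uniformly in $s$ — presumably with computer assistance via interval arithmetic in a neighborhood of the crossover $s^{*}$, where the optimizer changes topological type and any LP witness must be simultaneously tight against two different configurations — is where essentially all of the difficulty lies, and it is the reason the conjecture has resisted a complete proof despite overwhelming numerical evidence.
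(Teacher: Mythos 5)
The statement you are addressing is a \emph{conjecture}: the paper offers no proof, explicitly describes the triangular bipyramid versus square pyramid question as unresolved, and cites only partial results \cite{DLT2002,Schwartz2013,BHS2014,Schwartz2015}. Your text is accordingly a research plan rather than a proof, and its central step --- the ``symmetry-reduction lemma'' asserting that every global minimizer has a nontrivial stabilizer in $O(3)$ --- is not argued but merely hoped for (``a Hessian/Morse analysis should rule out asymmetric critical points''). That assertion is essentially the whole open problem. Even if you had it, the subsequent case analysis is incomplete: point stabilizers such as a single reflection or a $C_2$ rotation act on $5$-point configurations with several remaining degrees of freedom, so ``nontrivial symmetry'' does not reduce you to just the two named families.

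There are also two concrete errors in the reductions you treat as routine. First, a function completely monotonic on $(0,4]$ need not extend to a completely monotonic function on $[0,\infty)$ (e.g.\ $f(t)=4-t$), so the Bernstein representation is not available as stated; and even where $f=\int e^{-st}\,d\mu(s)$ does hold, linearity only gives $\min_{\mC}E_f(\mC)\ge\int\min_{\mC}E_{f_s}(\mC)\,d\mu(s)$. Because the identity of the minimizer changes with $s$ (TBP for small $s$, square pyramids of varying latitude for large $s$), knowing the conjecture for each $f_s$ does not determine the minimizer for a mixture; the reduction to Gaussian kernels is invalid for a statement about \emph{which} configuration wins, as opposed to universal optimality of a single fixed configuration. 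Second, the proposed linear programming certificate is known to be unavailable here: the triangular bipyramid is a $3$-distance set that is not a spherical $5$-design, Theorem~\ref{thm:cohnkumar2007} does not apply, and for five points the Delsarte--Yudin bound generically falls strictly below the optimal energy. This is precisely why five points are the first case without universal optimality, and why the existing partial results (e.g.\ Schwartz's) rely on direct, computer-assisted analysis of the configuration space rather than LP duality. Finally, the ``single crossover value $s^{*}$'' is itself an unproven monotonicity claim.
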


This conjecture really feels like it ought to be provable.  Specifying five
points on $S^2$ takes ten degrees of freedom, three of which are lost if we
take the quotient by symmetries.  Thus, we are faced with a calculus problem
in just seven variables.  However, despite a number of partial results
\cite{DLT2002,Schwartz2013,BHS2014,Schwartz2015}, no complete solution is
known.

\begin{table}
\caption{Known universal optima with $N$ points on $S^{n-1}$.}
\label{tab:univopt}
\begin{center}
\begin{tabular}{ccc}
\toprule
$n$ & $N$ & Description\\
\midrule
{$2$} & {$N$} & {$N$-gon}\\
{$n$} & {$N \le n+1$} & {simplex} (generalized tetrahedron)\\
{$n$} & {$2n$} & {cross polytope} (generalized octahedron)\\
{$3$} & {$12$} & {icosahedron}\\
{$4$} & {$120$} & {regular $600$-cell}\\
$5$ & $16$ & hemicube\\
{$6$} & {$27$} & {Schl\"afli graph}\\
$7$ & $56$ & equiangular lines\\
{$8$} & {$240$} & {$E_8$ root system}\\
$21$ & $112$ & isotropic subspaces\\
$21$ & $162$  & strongly regular graph\\
$22$ & $100$  & Higman-Sims graph\\
$22$ & $275$  & McLaughlin graph\\
$22$ & $891$  & isotropic subspaces\\
$23$ & $552$  & equiangular lines\\
$23$ & $4600$  & kissing configuration of next line\\
$24$ & $196560$ & Leech lattice minimal vectors\\
$q(q^3+1)/(q+1)$ & $(q+1)(q^3+1)$ & isotropic subspaces ($q$ is a
prime power)\\
\bottomrule
\end{tabular}
\end{center}
\end{table}

The known universal optima in spheres are listed in Table~\ref{tab:univopt}.
Each of them is an exciting mathematical object that predates the study of
universal optimality. For example, the $27$ points in $\R^6$ correspond to
the classical configuration of $27$ lines on a cubic surface.  One way of
thinking about universal optimality is that it highlights similarities
between various exceptional structure and helps characterize what's so
special about them.  See \cite{CohnKumar2007} for descriptions of these
objects and how they are related. We'll discuss the proof techniques in the
fourth lecture, while \cite{CohnKumar2007} contains detailed proofs.

One important source of universal optima is regular polytopes, the
higher-dimensional generalizations of the Platonic solids.  As in three
dimensions, only some of them are universally optimal, specifically the ones
with simplicial facets.\footnote{Surprisingly, the minimal vectors of $D_4$
are not universally optimal \cite{CCEK2007}, despite their beauty and
symmetry. They are the vertices of a regular polytope with octahedral
facets, called the regular $24$-cell.} The shortest vectors in the $E_8$
lattice (called the $E_8$ root system) also form a universally optimal
configuration, as do the shortest vectors in the Leech lattice.

\begin{figure}
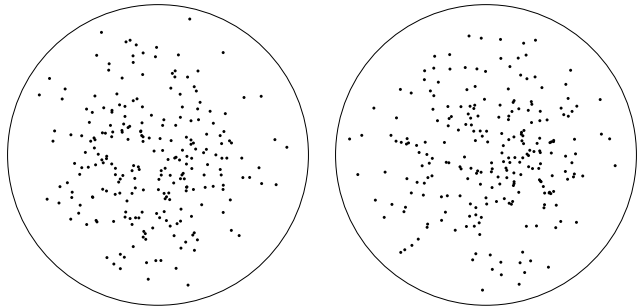

\begin{center}

\end{center}
\caption{A random $240$-point configuration in $S^7$, orthogonally projected onto randomly chosen planes.}
\label{fig:random}
\end{figure}

It is difficult to depict high-dimensional objects on a two-dimensional
page, but Figure~\ref{fig:E8} shows how the $E_8$ root system appears when
viewed from random directions.  It is so regular and symmetrical that even
these random views display considerable structure.  For comparison,
Figure~\ref{fig:random} shows similar projections of a random point
configuration.

In up to $24$ dimensions, all of the known universal optima are regular
polytopes or cross sections of the $E_8$ or Leech configurations.  However,
the last line of Table~\ref{tab:univopt} shows that there are more examples
coming from finite geometry.  It's not plausible that
Table~\ref{tab:univopt} is the complete list of universal optima, and in
fact \cite{BBCGKS2009} constructs two conjectural examples ($40$ points in
$\R^{10}$ and $64$ points in $\R^{14}$), but it seems difficult to find or
analyze further universal optima.

The gap between $8$ and $21$ dimensions in Table~\ref{tab:univopt} is
puzzling.  Are the dimensions in between not favored by universal
optimality, or do we just lack the imagination to construct new universal
optima in these dimensions?

\section{Optimality of simplices}

It is not difficult to explore energy minimization via numerical
optimization, but it is far from obvious how to prove anything about it.
Developing proof techniques will occupy most of the remaining lectures, and
we will start here by analyzing regular simplices, i.e., configurations of
equidistant points.

In particular, we will study the \emph{spherical code} problem: how can we
maximize the closest distance between $N$ points on the unit sphere
$S^{n-1}$?  This is an important problem in both geometry and information
theory.\footnote{If we represent radio signals by vectors in $\R^n$ by
measuring the amplitudes at different frequencies, then the squared vector
length is proportional to the power of the radio signal.  If we transmit a
constant-power signal, then we need an error-correcting code on the surface
of a sphere, i.e., a spherical code.}  It is a version of the sphere packing
problem in spherical geometry, i.e., for spherical caps on the surface of a
sphere. Furthermore, it is a degenerate case of energy minimization.  If we
look at the limit of increasingly steep potential functions, then
asymptotically only the minimal distance matters and we obtain an optimal
spherical code.

When $N \le n+1$, we will see shortly that the optimal solution is a
\emph{regular simplex}.  In other words, the points are all equidistant from
each other, forming an $n$-dimensional analogue of the equilateral triangle
or regular tetrahedron. The cutoff at $n+1$ simply reflects the fact that
$\R^n$ cannot contain more than $n+1$ equidistant points.

Let $\langle x,y \rangle$ denote the inner product of $x$ and $y$.  Inner
products  can be used to measure distances on the unit sphere, since
\[
|x-y|^2 = \langle x-y,x-y \rangle = |x|^2+|y|^2 - 2\langle x,y \rangle = 2 - 2 \langle x,y \rangle
\]
when $|x|=|y|=1$.  Thus, maximizing the distance $|x-y|$ is equivalent to
minimizing the inner product $\langle x,y \rangle$.

Note that if $x_1,\dots,x_N$ are unit vectors forming the vertices of a
regular simplex centered at the origin, then all the inner products between
them must be $-1/(N-1)$.  To see why, observe that $x_1+\dots+x_N = 0$ and
hence
\[
0 = |x_1+\dots+x_N|^2 = N + \sum_{i\ne j} \langle x_i,x_j \rangle,
\]
while all the $N(N-1)$ inner products in this sum are equal.  This
calculation already contains all the ingredients needed to prove that
regular simplices are optimal spherical codes:

\begin{proposition} \label{prop:simplex}
If $N \le n+1$, then the unique optimal $N$-point spherical code in
$S^{n-1}$ is the regular simplex centered at the origin.
\end{proposition}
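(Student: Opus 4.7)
The plan is to reduce the problem to minimizing the maximum pairwise inner product, and then to exploit the trivial inequality $|x_1+\dots+x_N|^2 \ge 0$ just like in the motivating calculation right before the statement. Since $|x-y|^2 = 2 - 2\langle x,y\rangle$ on the unit sphere, maximizing the minimum distance is equivalent to minimizing $\max_{i \ne j} \langle x_i, x_j\rangle$, so we need to show that this maximum is always at least $-1/(N-1)$, with equality exactly at regular simplices centered at the origin.

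First, for any unit vectors $x_1,\dots,x_N \in S^{n-1}$, I would expand
\[
0 \le |x_1+\dots+x_N|^2 = N + \sum_{i \ne j} \langle x_i, x_j\rangle,
\]
so $\sum_{i \ne j}\langle x_i,x_j\rangle \ge -N$. Dividing by the number $N(N-1)$ of off-diagonal pairs gives that the average inner product is at least $-1/(N-1)$, which immediately forces $\max_{i \ne j}\langle x_i,x_j\rangle \ge -1/(N-1)$. This is the key inequality; it's a one-line averaging argument.

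For the uniqueness half I would trace when equality can occur. Equality in $\max \ge $ average forces \emph{every} off-diagonal inner product to equal $-1/(N-1)$, and equality in $|x_1+\dots+x_N|^2 \ge 0$ forces $x_1+\dots+x_N = 0$. These two conditions together pin down the Gram matrix of the configuration completely, which determines the set $\{x_1,\dots,x_N\}$ up to orthogonal transformation of $\R^n$ (i.e., up to the ambient symmetry group of $S^{n-1}$), and what they describe is exactly a regular simplex inscribed in $S^{n-1}$ centered at the origin.

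The only place the hypothesis $N \le n+1$ enters is in guaranteeing that such a configuration actually exists in $\R^n$: $N$ equidistant unit vectors summing to zero span an $(N-1)$-dimensional subspace, which fits inside $\R^n$ precisely when $N-1 \le n$. I don't expect any real obstacle here; the argument is essentially the computation already carried out before the proposition, plus tracking the equality cases in the two inequalities used. The only mild care needed is the remark that ``unique'' is understood modulo the action of $O(n)$, since the proposition cannot distinguish isometric copies of the simplex.
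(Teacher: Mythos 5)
Your proof is correct and follows essentially the same route as the paper: the inequality $\left|\sum_i x_i\right|^2 \ge 0$, the averaging step giving $\max_{i\ne j}\langle x_i,x_j\rangle \ge -1/(N-1)$, and the equality analysis (all inner products equal, centroid at the origin) with $N \le n+1$ entering only for existence. Your added remark that the equality conditions determine the Gram matrix, and hence the configuration up to $O(n)$, is a reasonable elaboration of the uniqueness claim the paper leaves implicit.
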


Of course it is unique only up to rigid motions.

\begin{proof}
Suppose $x_1,\dots,x_N$ are points on $S^{n-1}$.  The fundamental inequality
we'll use is
\[
\left|\sum_{i=1}^N x_i \right|^2 \ge 0.
\]
Using $|x_i|^2=1$, this inequality expands to
\[
N + \sum_{i \ne j} \langle x_i, x_j \rangle \ge 0,
\]
which amounts to
\[
\frac{1}{N(N-1)}\sum_{i \ne j} \langle x_i,x_j \rangle \ge -\frac{1}{N-1}.
\]
In other words, the average inner product is at least $-1/(N-1)$, and hence
the maximal inner product (which corresponds to the minimal distance) must
be at least that large. Equality holds iff all the inner products are the
same and $\sum_i x_i = 0$.  This condition is equivalent to all the points
being equidistant with centroid at the origin, which can be achieved iff $N
\le n+1$.
\end{proof}

\begin{exercise} \label{ex:simplicesunivopt}
Prove that regular simplices are universally optimal, and more generally
that they minimize $E_f$ for every decreasing, convex potential function
$f$.
\end{exercise}

Our discussion here may give the impression that the existence of regular
simplices is trivial, while their optimality is a little more subtle.  This
impression is reasonable for Euclidean space, but in projective spaces or
Grassmannians the existence of regular simplices is far more mysterious.
See, for example, \cite{CohnKumarMinton}.

The inequality
\[
\left|\sum_{i=1}^N x_i \right|^2 \ge 0
\]
is useful for analyzing simplices, but it is not obvious at a glance what
its significance is or how it fits into a broader theory. It turns out to be
a special case of Delsarte's linear programming bounds, which are also
equivalent to the nonnegativity of the structure factor in statistical
physics. In the upcoming lectures, we'll look at these connections.  The
fundamental theme will be geometrical constraints on correlation functions.

\lecture{Interlude: Spherical harmonics}

Spherical harmonics are a spherical generalization of Fourier series and a
fundamental tool for understanding particle configurations on the surface of
a sphere. Despite their importance in mathematics, they are not nearly as
well known as Fourier series are, so this lecture will be devoted to the
basic theory. We'll begin with a quick review of Fourier series, to
establish notation and fundamental concepts, and then we'll do the same
things in higher dimensions.  Our discussion will start off in a rather
elementary fashion, but then gradually increase in sophistication. We won't
go through complete proofs of basic facts such as convergence of Fourier
series under the $L^2$ norm, but we will at least see an outline of what is
true and why, to a level of detail at which the proofs could be completed
using standard facts from introductory graduate classes.

\section{Fourier series}
\label{sec:fourierseries}

We will identify the circle $S^1$ with the quotient $\R/2\pi\Z$ via arc
length (i.e., the quotient of the real line in which we wrap around after
$2\pi$ units). In other words, a function on the circle is the same as a
function on $\R$ with period $2\pi$.

We know from basic analysis that every sufficiently nice function $f$ from
$\R/2\pi\Z$ to $\C$ can be expanded in a Fourier series
\begin{equation} \label{eq:fourierseries}
f(x) = \sum_{k \in \Z} a_k e^{i k x}.
\end{equation}
Of course we could replace the complex exponentials with trigonometric
functions by writing $e^{ikx} = \cos kx + i \sin kx$, but the exponentials
will be more pleasant.

The coefficients $a_\ell$ are determined by orthogonality via
\[
a_\ell = \frac{1}{2\pi}\int_0^{2\pi} f(x) e^{-i\ell x} \, dx,
\]
because we can interchange the sum \eqref{eq:fourierseries} with the
integral and apply
\begin{equation} \label{eq:orthonormal}
\frac{1}{2\pi}\int_0^{2\pi} e^{i(k-\ell)x} \, dx = \begin{cases} 1 & \textup{if $k=\ell$, and}\\
0 & \textup{otherwise.}
\end{cases}
\end{equation}

The right setting for Fourier series is the space of square-integrable
functions on $S^1$, i.e.,
\[
L^2(S^1) = \left\{f \colon \R/2\pi\Z \to \C \ \Big|\  \int_0^{2\pi} |f(x)|^2 \, dx < \infty \right\}.
\]
This is a Hilbert space under the inner product $\langle \cdot,\cdot \rangle$
defined by
\[
\langle f,g \rangle = \frac{1}{2\pi}\int_0^{2\pi} \overline{f(x)} g(x) \, dx,
\]
which corresponds to the $L^2$ norm $\|\cdot\|_2$ defined by
\[
\|f\|_2 = \sqrt{\langle f,f \rangle} = \sqrt{\frac{1}{2\pi} \int_0^{2\pi} |f(x)|^2 \, dx}.
\]
The exponential functions are orthonormal in $L^2(S^1)$: if $f_k$ is the
function defined by $f_k(x) = e^{i k x}$, then \eqref{eq:orthonormal}
amounts to
\[
\langle f_k, f_\ell \rangle = \begin{cases} 1 & \textup{if $k=\ell$, and}\\
0 & \textup{otherwise.}
\end{cases}
\]
Furthermore, these functions form an orthonormal basis of $L^2(S^1)$.

We can express this fact algebraically as follows. If $V_k$ consists of the
complex multiples of the function $f_k$, then
\[
L^2(S^1) = \hatbigoplus_{k \in \Z} V_k.
\]
(Here $\oplus$ is the orthogonal direct sum.  The hat indicates a Hilbert
space completion; without the hat, the direct sum would contain only sums of
finitely many exponentials.) In other words, the partial sums of the Fourier
series of an $L^2$ function converge to that function under the $L^2$ norm.
However, it's important to keep in mind that they needn't converge pointwise.

The most important property of the decomposition
\[
L^2(S^1) = \hatbigoplus_{k \in \Z} V_k.
\]
is that it is compatible with the symmetries of $S^1$ (i.e., the rigid
motions that preserve $S^1$), as we will see shortly. Recall that the
symmetry group $O(2)$ of $S^1$ consists of rotations and reflections that
fix the center of the circle, with the subgroup $SO(2)$ consisting of just
the rotations. The notation is based on the fact that these symmetries can
be written in terms of orthogonal matrices, but we do not need that
perspective here.

Each symmetry $g$ of $S^1$ acts on functions $f \colon S^1 \to \C$ by
sending $f$ to the function $gf$ defined by $(gf)(x) = f(g^{-1}x)$.  The
inverse ensures that the associative law $(gh)f = g(hf)$ holds.  For
motivation, recall that moving the graph of a function $f(x)$ one unit to
the right amounts to graphing $f(x-1)$, not $f(x+1)$.  Similarly, the graph
of $gf$ is simply the graph of $f$ transformed according to $g$.

Under this action, $L^2(S^1)$ is a representation of the group $O(2)$.  In
other words, the group $O(2)$ acts on $L^2(S^1)$ by linear transformations.
In fact, it is a unitary representation, which means that symmetries of
$S^1$ preserve the $L^2$ norm.  We would like to decompose $L^2(S^1)$ into
irreducible representations of $O(2)$ or $SO(2)$. In other words, we would
like to break it apart into orthogonal subspaces preserved by these groups,
with the subspaces being as small as possible.

For the rotation group $SO(2)$, we're already done.  In the $\R/2\pi\Z$
picture, rotations of $S^1$ correspond to translations of $\R$.  The
exponential functions are already invariant: if we translate $x \mapsto
e^{ikx}$ by $t$, we get
\[
e^{ik(x-t)} = e^{-ikt} e^{ikx},
\]
which is the original function $x \mapsto e^{ikx}$ multiplied by the
constant $e^{-ikt}$.  In other words, $V_k$ is itself a representation of
$SO(2)$, and
\[
L^2(S^1) = \hatbigoplus_{k \in \Z} V_k
\]
is the complete decomposition of $L^2(S^1)$ under this group action.  Each
summand must be irreducible, since it's one-dimensional.

There are many ways to restate this decomposition, such as:
\begin{enumerate}
\item The Fourier basis simultaneously diagonalizes the translation
    operators on $L^2(\R/2\pi\Z)$ (i.e., rotations of $L^2(S^1)$).

\item The exponential functions are simultaneous eigenfunctions for the
    translation operators.
\end{enumerate}
It turns out that the reason why the Fourier decomposition is particularly
simple, with one-dimensional summands, is that the rotation group $SO(2)$ is
abelian.

But what about the full symmetry group $O(2)$?  It is generated by $SO(2)$
and any one reflection, because all the reflections are conjugate by
rotations.  In the $\R/2\pi\Z$ picture, we can use the reflection $x \mapsto
-x$.  The nonconstant exponential functions are not preserved by this
reflection, because it takes $x \mapsto e^{ikx}$ to $x \mapsto e^{-ikx}$. In
other words, it interchanges $k$ with $-k$.

However, this is no big deal.  Instead of keeping the representations $V_k$
and $V_{-k}$ separate, we combine them to form $W_k = V_k \oplus V_{-k}$
when $k>0$ (while we take $W_0=V_0$). Now $W_k$ is the span of $x \mapsto
e^{ikx}$ and $x \mapsto e^{-ikx}$, or equivalently $x \mapsto \cos kx$ and
$x \mapsto \sin kx$ if we expand $e^{\pm ikx} = \cos kx \pm i \sin kx$.
These spaces $W_k$ are preserved by $O(2)$, because this group is generated
by $SO(2)$ and $x \mapsto -x$. Thus, the decomposition of $L^2(S^1)$ into
irreducible representations of $O(2)$ is
\[
L^2(S^1) = \hatbigoplus_{k \ge 0} W_k.
\]
This decomposition is just slightly more complicated than the one for
$SO(2)$, because $\dim W_k = 2$ when $k>0$.

Another way to think of this equation is as the spectral decomposition of
the Laplacian operator $d^2/dx^2$.  Specifically,
\[
\frac{d^2}{dx^2} e^{ikx} = -k^2 e^{ikx}.
\]
Thus, $W_k$ is the eigenspace with eigenvalue $-k^2$.  The Laplacian plays a
fundamental role, since it is invariant under the action of $O(2)$.  (In
other words, translating or reflecting a function commutes with taking its
Laplacian.) In fact, the Laplacian generates the algebra of
isometry-invariant differential operators on $S^1$, but that's going
somewhat far afield from anything we will need.

\section{Fourier series on a torus} \label{sec:fouriertorus}

The $S^1$ theory generalizes pretty straightforwardly if we think of $S^1$
as a one-dimensional torus.  We can view a higher-dimensional flat torus as
$\R^n/\Lambda$, where $\Lambda$ is a lattice in $\R^n$.  In other words, we
simply take a fundamental cell for $\Lambda$ and wrap around whenever we
cross the boundary.  When we looked at $S^1$, we wrote it as $\R^1/\Lambda$
with $\Lambda = 2\pi\Z$, and it's worth keeping this example in mind.

We can decompose $L^2(\R^n/\Lambda)$ into exponential functions in exactly
the same way as we did for $S^1$. It works out particularly simply since
$\R^n/\Lambda$ is an abelian group. To write this decomposition down, we
need to figure out which exponential functions are periodic modulo
$\Lambda$.  Suppose $y \in \R^n$, and consider the exponential function
\[
x \mapsto e^{2\pi i \langle x,y \rangle}
\]
from $\R^n$ to $\C$.  Here $\langle \cdot, \cdot \rangle$ denotes the usual
inner product on $\R^n$ (not the inner product on functions used in the
previous section).  This formula defines a function on $\R^n/\Lambda$ if and
only if it is invariant under translation by vectors in $\Lambda$.

What happens if we translate the function $x \mapsto e^{2\pi i \langle x,y
\rangle}$ by a vector $z$?  It gets multiplied by $e^{-2\pi i \langle z,y
\rangle}$, and so it is always an eigenfunction of the translation operator.
Furthermore, it is invariant under translation by vectors in $\Lambda$ if
and only if $y$ satisfies
\[
e^{2\pi i \langle z,y \rangle} = 1
\]
for all $z \in \Lambda$, which is equivalent to $\langle z,y \rangle \in \Z$
for all $z \in \Lambda$.

Let
\[
\Lambda^* = \{y \in \R^n \ |\  \langle z,y \rangle \in \Z \textup{ for all $z \in \Lambda$}\}
\]
be the \emph{dual lattice} to $\Lambda$.  Thus, the exponential functions
that are periodic modulo $\Lambda$ are parameterized by $\Lambda^*$.

\begin{exercise} \label{ex:duallattice}
Prove that $\Lambda^*$ is a lattice.  Specifically, prove that if
$v_1,\dots,v_n$ is any basis of $\Lambda$, then $\Lambda^*$ has
$v_1^*,\dots,v_n^*$ as a basis, where these vectors are the dual basis
vectors satisfying
\[
\langle v_i, v_j^* \rangle = \begin{cases} 1 & \textup{if $i=j$, and}\\
0 & \textup{otherwise.}
\end{cases}
\]
Deduce also that $(\Lambda^*)^* = \Lambda$.
\end{exercise}

Let $V_y$ be the complex multiples of $x \mapsto e^{2\pi i \langle x,y
\rangle}$. Then
\[
L^2(\R^n/\Lambda) = \hatbigoplus_{y \in \Lambda^*} V_y,
\]
which is the decomposition into irreducible representations under the
translation action.

When $n=1$, the lattice $\Lambda$ is determined up to scaling.  In the
previous section we took $\Lambda = 2 \pi \Z$, in which case $\Lambda^* =
(2\pi)^{-1} \Z$.  The elements of $\Lambda^*$ are $(2\pi)^{-1}k$, where $k$
is an integer, and $V_{(2\pi)^{-1}k}$ is spanned by $x \mapsto e^{ikx}$.
Thus, we recover exactly the same theory as in the previous section, except
that we now write $V_{(2\pi)^{-1}k}$ instead of $V_k$.  It's arguably
prettier to take $\Lambda = \Z$ and use the functions $x \mapsto e^{2 \pi i
k x}$, but this is a matter of taste.

The higher-dimensional analogue of the $O(2)$ theory is a little more subtle.
The map $x \mapsto -x$ is always a symmetry of $\R^n/\Lambda$, and taking it
into account means combining $V_y$ with $V_{-y}$ as before.  Generically, all
the symmetries of $\R^n/\Lambda$ are generated by translations and $x \mapsto
-x$.  However, particularly nice lattices may have further symmetries.  If
$G$ is the automorphism group of the lattice itself, then the full group of
isometries of $\R^n/\Lambda$ is the semidirect product of $G$ with the
additive group $\R^n/\Lambda$.  What effect this has on the decomposition of
$L^2(\R^n/\Lambda)$ depends on the representation theory of $G$.  However,
for many purposes this is not important, and the decomposition under
translations alone will suffice.

\section{Spherical harmonics}

If we think of $S^1$ as a one-dimensional sphere, rather than a
one-dimensional torus, then it is less clear how to generalize Fourier
series.  Instead of exponential functions, we'll have to use \emph{spherical
harmonics}.

The symmetry group of the unit sphere
\[
S^{n-1} = \{x \in \R^n \ |\  |x|^2 = 1\}
\]
is the orthogonal group $O(n)$, which consists of $n \times n$ orthogonal
matrices.  As before, $L^2(S^{n-1})$ is a Hilbert space under the inner
product
\[
\langle f,g \rangle = \int_{S^{n-1}} \overline{f(x)} g(x) \, dx,
\]
where the integral is taken with respect to the surface measure on
$S^{n-1}$, and $L^2(S^{n-1})$ is a unitary representation of $O(n)$. We
would like to decompose it into irreducible representations of $O(n)$.

To get a handle on $L^2(S^{n-1})$, we will study the polynomials on
$S^{n-1}$.  Let $\mathcal{P}_k$ be the subset of $L^2(S^{n-1})$ consisting
of polynomials on $\R^n$ of total degree at most $k$.  (Strictly speaking,
it consists of the restrictions of these polynomials to $S^{n-1}$, since two
different polynomials can define the same function on the unit sphere.) Then
\[
\mathcal{P}_0 \subseteq \mathcal{P}_1 \subseteq \mathcal{P}_2 \subseteq \dots,
\]
and each $\mathcal{P}_k$ is a representation of $O(n)$.  To see why, note
that rotating or reflecting a polynomial gives another polynomial of the same
degree; in fact, this is true for any invertible linear transformation.

Let $W_0 = \mathcal{P}_0$, and for $k>0$ let $W_k$ be the orthogonal
complement of $\mathcal{P}_{k-1}$ in $\mathcal{P}_k$.  Then $W_k$ is a
representation of $O(n)$, because $\mathcal{P}_{k-1}$ and $\mathcal{P}_k$
are representations and the inner product in $L^2(S^{n-1})$ is
$O(n)$-invariant.  Iterating this decomposition shows that
\[
\mathcal{P}_k = W_0 \oplus W_1 \oplus \dots \oplus W_k.
\]
Furthermore, $\bigcup_k \mathcal{P}_k$ is dense\footnote{Continuous
functions are dense in $L^2(S^{n-1})$, and the Stone-Weierstrass theorem
tells us that polynomials are dense in the space of continuous functions.}
in $L^2(S^{n-1})$, and hence
\[
L^2(S^{n-1}) = \hatbigoplus_{k \ge 0} W_k.
\]
We have thus decomposed $L^2(S^{n-1})$ into finite-dimensional
representations of $O(n)$.  In fact they are irreducible, as we will see in
the next lecture, but that fact is by no means obvious.

This decomposition may sound abstract, but it's actually quite elementary,
since it is simply given by polynomials.  Let's check that it agrees with
what we did for $S^1$. Polynomials on $\R^2$ can be written in terms of the
coordinate variables $x$ and $y$, and in the $\R/2\pi\Z$ picture we have $x
= \cos \theta$ and $y = \sin \theta$ with $\theta \in \R/2\pi\Z$.  Thus,
$\mathcal{P}_k$ consists of polynomials of degree at most $k$ in the
functions $\theta \mapsto \cos \theta$ and $\theta \mapsto \sin \theta$.  If
we write $\cos \theta = (e^{i\theta} + e^{-i\theta})/2$ and $\sin \theta =
(e^{i\theta} - e^{-i\theta})/(2i)$, then we find that the elements of
$\mathcal{P}_k$ involve powers of $e^{i\theta}$ ranging from $-k$ to $k$,
and every such power is in $\mathcal{P}_k$. In other words,
\[
\mathcal{P}_k = V_{-k} \oplus V_{-(k-1)} \oplus \dots \oplus V_{k-1} \oplus V_{k}
\]
in the notation from \S\ref{sec:fourierseries}.  In particular, the
orthogonal complement $W_k$ of $\mathcal{P}_{k-1}$ in $\mathcal{P}_k$ is
indeed $V_{-k} \oplus V_k$ when $k>0$, which agrees with our previous
construction.

Returning to $L^2(S^{n-1})$, we call the elements of $W_k$ \emph{spherical
harmonics} of degree $k$.  Note that the word ``harmonic'' generalizes the
term from music theory for a note whose frequency is an integer multiple of
the base frequency; this term literally describes $W_k$ when $n=2$, and it is
applied by analogy in higher dimensions.

Writing $W_k$ down explicitly is a little subtle, because two different
polynomials on $\R^n$ can restrict to the same function on $S^{n-1}$.  For
example, $x_1^2 + \dots + x_n^2$ and $1$ are indistinguishable on the unit
sphere.  To resolve this ambiguity, we will choose a canonical representative
for each equivalence class.

\begin{lemma}
For each polynomial on $\R^n$, there is a unique harmonic polynomial on
$\R^n$ with the same restriction to $S^{n-1}$.
\end{lemma}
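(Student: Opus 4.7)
The plan is to decompose homogeneous polynomials on $\R^n$ into a harmonic part and a multiple of $|x|^2$, then use $|x|^2 = 1$ on $S^{n-1}$ to eliminate the non-harmonic contribution. Let $\mathcal{P}_k^{\textup{hom}}$ denote the space of homogeneous polynomials of degree $k$ on $\R^n$, and let $\mathcal{H}_k \subseteq \mathcal{P}_k^{\textup{hom}}$ be the subspace of those killed by the Laplacian $\Delta$.

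The first and main step is the Fischer decomposition
\[
\mathcal{P}_k^{\textup{hom}} = \mathcal{H}_k \oplus |x|^2 \mathcal{P}_{k-2}^{\textup{hom}}.
\]
I would prove this by equipping polynomials with the Fischer inner product $\langle p,q\rangle_F = \big(p(\partial) q\big)(0)$ (where $p(\partial)$ is the constant-coefficient differential operator obtained by replacing each $x_i$ with $\partial/\partial x_i$). Under this inner product, multiplication by $|x|^2$ is the adjoint of $\Delta$. Since multiplication by $|x|^2$ is injective on polynomials, its adjoint $\Delta\colon \mathcal{P}_k^{\textup{hom}} \to \mathcal{P}_{k-2}^{\textup{hom}}$ is surjective, and then $\mathcal{H}_k = \ker(\Delta|_{\mathcal{P}_k^{\textup{hom}}})$ is precisely the orthogonal complement of $|x|^2 \mathcal{P}_{k-2}^{\textup{hom}}$ inside $\mathcal{P}_k^{\textup{hom}}$.

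Iterating gives $\mathcal{P}_k^{\textup{hom}} = \bigoplus_{j \ge 0} |x|^{2j}\mathcal{H}_{k-2j}$. Restricting to $S^{n-1}$ makes every $|x|^{2j}$ factor trivial, so any $p \in \mathcal{P}_k^{\textup{hom}}$ agrees on $S^{n-1}$ with $\sum_{j} h_{k-2j}$ for appropriate $h_{k-2j} \in \mathcal{H}_{k-2j}$. This sum is harmonic because $\Delta$ is linear and annihilates each summand (even though the sum is not homogeneous). Applying this to each homogeneous component of a general polynomial gives existence.

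For uniqueness I would invoke the maximum principle for harmonic functions: if $h$ is a harmonic polynomial on $\R^n$ with $h|_{S^{n-1}} = 0$, then $h$ is continuous on the closed unit ball and harmonic on its interior with vanishing boundary values, so $h \equiv 0$ on the closed ball. A polynomial vanishing on an open subset of $\R^n$ is identically zero, so $h = 0$ everywhere, which gives uniqueness. The main obstacle is the Fischer decomposition; everything else is formal once one is willing to cite the maximum principle.
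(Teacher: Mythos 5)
Your proof is correct, and for the existence half it takes a genuinely different route from the paper. Both arguments handle uniqueness identically, via the maximum principle applied to a harmonic polynomial vanishing on $S^{n-1}$. For existence, the paper uses a pure dimension count: multiplication by $x_1^2+\dots+x_n^2-1$ embeds $\mathcal{Q}_{k-2}$ into the kernel of the restriction map on $\mathcal{Q}_k$, so $\dim \mathcal{P}_k \le \dim\mathcal{Q}_k - \dim\mathcal{Q}_{k-2}$, while rank--nullity for $\Delta\colon \mathcal{Q}_k \to \mathcal{Q}_{k-2}$ gives $\dim\ker\Delta|_{\mathcal{Q}_k} \ge \dim\mathcal{Q}_k - \dim\mathcal{Q}_{k-2}$; combined with the injectivity of restriction on harmonics (which comes from uniqueness), this forces surjectivity. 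You instead prove the Fischer decomposition $\mathcal{P}_k^{\textup{hom}} = \mathcal{H}_k \oplus |x|^2\mathcal{P}_{k-2}^{\textup{hom}}$ using the inner product $\langle p,q\rangle_F = (p(\partial)q)(0)$, under which multiplication by $|x|^2$ is adjoint to $\Delta$ and injectivity of the former yields surjectivity of the latter; iterating and setting $|x|^2=1$ on the sphere produces the harmonic representative explicitly, with no appeal to uniqueness. The paper's count is shorter and needs only linear algebra, but your argument buys more: it gives a canonical orthogonal decomposition $\mathcal{P}_k^{\textup{hom}} = \bigoplus_j |x|^{2j}\mathcal{H}_{k-2j}$, which immediately identifies the spaces $W_k$ with the homogeneous harmonics of degree $k$ --- a fact the paper only establishes later by computing eigenvalues of the spherical Laplacian. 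The one point worth stating explicitly is the adjointness computation, namely that the operator associated to $|x|^2 p(x)$ is $p(\partial)\circ\Delta$, so that $\langle |x|^2 p, q\rangle_F = \langle p, \Delta q\rangle_F$; once that is written down, everything else is as formal as you claim.
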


Recall that \emph{harmonic} means $\Delta g = 0$, where
\[
\Delta = \frac{\partial^2}{\partial x_1^2} + \dots + \frac{\partial^2}{\partial x_n^2}
\]
is the Laplacian on $\R^n$.  If $f$ is a harmonic polynomial with
$f|_{S^{n-1}} = g|_{S^{n-1}}$, then $f$ is called a \emph{harmonic
representative} for $g$.

The main fact we'll need about harmonic functions is the maximum principle
\cite[p.~7]{ABR2001}: the maximum of a harmonic function on a domain $D$
cannot occur in the interior of $D$ (instead, it must occur on the
boundary).  Of course, multiplying the function by $-1$ shows that the same
is true for the minimum.

\begin{proof}
Uniqueness follows immediately from the maximum principle: if
\[
g_1|_{S^{n-1}} = g_2|_{S^{n-1}}
\]
with both $g_1$ and $g_2$ harmonic, then
$g_1-g_2$ is a harmonic function that vanishes on $S^{n-1}$. It must
therefore vanish inside the sphere as well (since its minimum and maximum
over the ball must be attained on the sphere), which implies that $g_1=g_2$
because they are polynomials.

Proving existence of a harmonic representative is only slightly trickier. Let
$\mathcal{Q}_k$ denote the space of polynomials of degree at most $k$ on
$\R^n$.  Note that the difference between $\mathcal{P}_k$ and $\mathcal{Q}_k$
is that $\mathcal{P}_k$ consists of the restrictions to $S^{n-1}$, and thus
$\mathcal{P}_k$ is the quotient of $\mathcal{Q}_k$ by the polynomials whose
restrictions vanish. Multiplication by $x_1^2+\dots+x_n^2-1$ maps
$\mathcal{Q}_{k-2}$ injectively to $\mathcal{Q}_k$, and its image vanishes on
$S^{n-1}$, so
\[
\dim \mathcal{P}_k \le \dim \mathcal{Q}_k - \dim \mathcal{Q}_{k-2}.
\]
On the other hand, $\Delta$ maps $\mathcal{Q}_k$ to $\mathcal{Q}_{k-2}$, and
hence
\[
\dim \ker \Delta|_{\mathcal{Q}_k} \ge \dim \mathcal{Q}_k - \dim \mathcal{Q}_{k-2} \ge \dim \mathcal{P}_k.
\]
By uniqueness, the restriction map from $\ker \Delta|_{\mathcal{Q}_k}$ to
$\mathcal{P}_k$ is injective, and thus the inequality $\dim \ker
\Delta|_{\mathcal{Q}_k} \ge \dim \mathcal{P}_k$ implies that each polynomial
in $\mathcal{P}_k$ must have a harmonic representative (and $\dim \ker
\Delta|_{\mathcal{Q}_k} = \dim \mathcal{P}_k$).
\end{proof}

Another way to understand spherical harmonics is as eigenfunctions of the
\emph{spherical Laplacian} $\Delta_{S^{n-1}}$, which acts on $C^2$ functions
on the sphere (i.e., twice continuously differentiable functions).  The right
setting for this operator is the theory of Laplace-Beltrami operators in
Riemannian geometry, but we can give a quick, ad hoc definition as follows.
Given a function $f$ on $S^{n-1}$, extend it to a radially constant function
$f_{\mbox{\scriptsize radial}}$ on $\R^n \setminus \{0\}$. Then we define
$\Delta_{S^{n-1}}$ by
\[
\Delta_{S^{n-1}} f = \big(\Delta
f_{\mbox{\scriptsize radial}}\big)\big|_{S^{n-1}}.
\]
In other words, $\Delta_{S^{n-1}} f$ measures the Laplacian of $f$ when there
is no radial change.

It is often notationally convenient to extend the operator $\Delta_{S^{n-1}}$
to apply to functions $f \colon \R^n\setminus\{0\} \to \R$, rather than just
functions defined on the unit sphere. We can do so by rescaling everything to
the unit sphere. More precisely, to define $\Delta_{S^{n-1}} f$ at the point
$x$, we consider the function $g \colon S^{n-1} \to \R$ defined by $g(y) =
f(|x|y)$, and we let
\[
\Delta_{S^{n-1}} f(x)  =  \Delta_{S^{n-1}} g(x/|x|).
\]

The advantage of being able to apply $\Delta_{S^{n-1}}$ to functions on
$\R^n \setminus\{0\}$ is that it becomes the angular part of the Euclidean
Laplacian in spherical coordinates:

\begin{exercise}
Prove that if $r$ denotes the distance to the origin and $\partial/\partial
r$ is the radial derivative, then for every $C^2$ function $f \colon \R^n
\to \R$,
\begin{equation} \label{eq:sphericalcoors}
\Delta f = \frac{\partial^2 f}{\partial r^2} + \frac{n-1}{r} \frac{\partial f}{\partial r} + \frac{1}{r^2} \Delta_{S^{n-1}} f
\end{equation}
when $r \ne 0$.
\end{exercise}

If $f$ is homogeneous of degree $k$, then \eqref{eq:sphericalcoors} becomes
\[
\Delta f = \frac{k(k-1)f}{r^2} + \frac{(n-1)kf}{r^2} + \frac{1}{r^2} \Delta_{S^{n-1}} f.
\]
Then $\Delta f = 0$ is equivalent to $\Delta_{S^{n-1}} f = -k(k+n-2) f$. In
other words, harmonic functions that are homogeneous of degree $k$ are
eigenfunctions of the spherical Laplacian with eigenvalue $-k(k+n-2)$.  We
will see shortly that the spherical harmonics in $W^k$ are all homogeneous of
degree $k$, and thus that the spaces $W_k$ are the eigenspaces of
$\Delta_{S^{n-1}}$.

First note, that the Euclidean Laplacian maps homogeneous polynomials of
degree $k$ to homogeneous polynomials of degree $k-2$.  Thus, every harmonic
polynomial is the sum of homogeneous harmonics.

In terms of spherical harmonics, $\mathcal{P}_k$ is the sum of the
eigenspaces of $\Delta_{S^{n-1}}$ with eigenvalues $-\ell(\ell+n-2)$ for
$\ell=0,1,\dots,k$.  These eigenspaces are orthogonal, because the spherical
Laplacian is symmetric:

\begin{lemma}
For $C^2$ functions $f$ and $g$ on $S^{n-1}$,
\[
\langle  f, \Delta_{S^{n-1}}g \rangle = \langle \Delta_{S^{n-1}} f,g \rangle.
\]
\end{lemma}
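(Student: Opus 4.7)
The plan is to derive symmetry of $\Delta_{S^{n-1}}$ from Green's second identity for the ordinary Euclidean Laplacian, by exploiting the radial extension used to define $\Delta_{S^{n-1}}$ in the first place. Morally, this is the assertion that the Laplace--Beltrami operator on a closed Riemannian manifold is self-adjoint, but since the excerpt builds $\Delta_{S^{n-1}}$ in an ad hoc extrinsic way it is natural to give an extrinsic proof.

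First I would extend $f$ and $g$ to radially constant functions $\tilde f, \tilde g$ on $\R^n \setminus \{0\}$, so that $\partial \tilde f/\partial r = \partial \tilde g/\partial r = 0$. The formula from the preceding exercise then collapses to $\Delta \tilde f = r^{-2} \Delta_{S^{n-1}} \tilde f$, and by the rescaling convention $\Delta_{S^{n-1}} \tilde f$ is itself radially constant with value $\Delta_{S^{n-1}} f$ on every sphere; the same holds for $\tilde g$.

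Next I would apply Green's second identity on the spherical annulus $A = \{x : a \le |x| \le b\}$:
\[
\int_A \bigl(\overline{\tilde f}\, \Delta \tilde g - \overline{\Delta \tilde f}\, \tilde g\bigr)\, dV = \int_{\partial A} \bigl(\overline{\tilde f}\, \partial_\nu \tilde g - \overline{\partial_\nu \tilde f}\, \tilde g\bigr)\, dS.
\]
The outward normal on each boundary sphere is purely radial, so the vanishing of radial derivatives of $\tilde f$ and $\tilde g$ kills the right-hand side. Converting the left-hand side to spherical coordinates $dV = r^{n-1}\, dr\, d\sigma$ and using $\Delta \tilde f = r^{-2} \Delta_{S^{n-1}} \tilde f$ pulls the radial dependence out as a factor $\int_a^b r^{n-3}\, dr$, leaving the spherical integral
\[
\int_{S^{n-1}} \bigl(\overline{f}\, \Delta_{S^{n-1}} g - \overline{\Delta_{S^{n-1}} f}\, g\bigr)\, d\sigma
\]
multiplied by that radial factor. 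Choosing $a, b$ so the radial integral is nonzero (which is possible in every dimension $n \ge 2$, since $r^{n-3}$ is positive on any annulus) forces the spherical integral to vanish, which is the desired identity.

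The main obstacle is really just bookkeeping: one has to check that $\Delta_{S^{n-1}}$, as defined on functions on $\R^n \setminus \{0\}$ via the rescaling prescription, genuinely commutes with radial extension, so that $(\Delta_{S^{n-1}} \tilde f)(r\omega) = (\Delta_{S^{n-1}} f)(\omega)$. Once that is in hand, the rest is a one-line application of Green's identity, and no delicate boundary analysis is needed because the sphere has no boundary and the annulus's two boundary contributions are both killed by radial constancy.
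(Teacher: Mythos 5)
Your proof is correct and follows essentially the same route as the paper's: radially extend $f$ and $g$, apply Green's second identity on a spherical annulus (the paper uses $1/2 \le |x| \le 2$), note that the boundary terms vanish because the normal derivatives are radial and hence zero, and divide out the nonzero radial factor $\int r^{n-3}\,dr$. The one point you flag as needing verification --- that $\Delta_{S^{n-1}}$ applied to a radially constant extension is again radially constant with the expected value --- is immediate from the rescaling definition and is treated as such in the paper.
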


\begin{proof}
This identity is well-known for the Laplace-Beltrami operator, but verifying
it using our ad hoc definition takes a short calculation.  Replace $f$ and
$g$ with their radial extensions to $\R^n \setminus\{0\}$, and let
\[
\Omega = \{x \in \R^n \ |\  1/2 \le |x| \le 2\}.
\]
Equation \eqref{eq:sphericalcoors} implies that
\[
\big(\langle  f, \Delta_{S^{n-1}}g \rangle - \langle \Delta_{S^{n-1}} f,g \rangle\big) \int_{1/2}^2 \omega_n r^{n-3} \, dr
= \int_\Omega f \Delta g - g \Delta f,
\]
where the integral over $\Omega$ is with respect to Lebesgue measure and
$\omega_n$ is the surface area of $S^{n-1}$.  In this equation, $\omega_n
r^{n-3}$ combines the volume factor from spherical coordinates with the
$1/r^2$ factor multiplying $\Delta_{S^{n-1}} f$ in \eqref{eq:sphericalcoors}.

Now Green's identity tells us that
\[
\int_{\Omega} f \Delta g - g \Delta f = \int_{\partial \Omega} f \frac{\partial g}{\partial n} - g \frac{\partial f}{\partial n},
\]
where $\partial/\partial n$ denotes the normal derivative and the integral
over $\partial \Omega$ is with respect to surface measure. It vanishes
because $\partial f/\partial n =
\partial g/\partial n = 0$ by construction.
\end{proof}

Because $\mathcal{P}_k$ is the sum of the eigenspaces of $\Delta_{S^{n-1}}$
with eigenvalues $-\ell(\ell+n-2)$ for $\ell=0,1,\dots,k$ and these
eigenspaces are orthogonal, the orthogonal complement of $\mathcal{P}_{k-1}$
in $\mathcal{P}_k$ must be the $-k(k+n-2)$ eigenspace.  Thus, $W_k$ consists
of the harmonic polynomials that are homogeneous of degree $k$.  This gives
a rather concrete, if cumbersome, description of the space of spherical
harmonics. By contrast, people sometimes make spherical harmonics look
unnecessarily exotic by writing them in spherical coordinates as
eigenfunctions of the Laplacian.

\begin{exercise}
Compute the homogeneous harmonic polynomials explicitly when $n=2$, and
check that this computation agrees with our earlier analysis of $S^1$.
\end{exercise}

We will see in the next lecture that $W_k$ is an irreducible representation
of $O(n)$. Thus, we have found the complete decomposition of $L^2(S^{n-1})$
into irreducible representations, as well as the spectral decomposition of
the Laplacian. The biggest conceptual difference from $S^1$ is that the space
$W_k$ of degree $k$ spherical harmonics has much higher dimension than $2$ in
general, but that's not an obstacle to using this theory.

\lecture{Energy and packing bounds on spheres}

\section{Introduction}

In this lecture, we will use spherical harmonics to prove bounds for packing
and energy minimization on spheres.\footnote{Analogous techniques work in
various other settings, such as projective spaces or Grassmannians.} Our
technique will be essentially the same as in the proof of
Proposition~\ref{prop:simplex} from the second lecture, but the bounds will
be more sophisticated algebraically and much more powerful. By the end of the
lecture we will be able to solve the kissing problem in $\R^8$ and $\R^{24}$,
as well as analyze almost all of the known cases of universal optimality. In
the next lecture we will tackle Euclidean space using much the same approach,
but the analytic technicalities will be greater and it will be useful to have
looked at the spherical case first.

Everything we do will be based on studying the distances that occur between
pairs of points.  Motivated by error-correcting codes, we call a finite
subset $\mC$ of $S^{n-1}$ a \emph{code}. The \emph{distance distribution} of
a code measures how often each pairwise distance occurs.  For $-1 \le t \le
1$, define the distance distribution $A$ of $\mC$ by
\[
A_t = \#\big\{ (x,y) \in \mC^2 \ |\  \langle x,y \rangle = t\big\},
\]
where $\langle \cdot,\cdot\rangle$ denotes the usual inner product on $\R^n$.
Recall that $|x-y|^2 = 2-\langle x,y \rangle$ when $x$ and $y$ are unit
vectors; thus, $A_t$ counts the number of pairs at distance $\sqrt{2-2t}$,
but inner products are a more convenient way to index these distances.  In
physics terms \cite[p.~63]{Torquato2002}, the distance distribution is
equivalent to the \emph{pair correlation function}, although it is formulated
a little differently.

We can express the energy for a pair potential function $f$ in terms of the
distance distribution via
\begin{equation} \label{eq:EnergyAt}
\sum\limits_{\substack{x,y
\in \mC\\ x \ne y}} f(|x-y|^2) = \sum_{-1 \le t < 1} f(2-2t) A_t.
\end{equation}
(In the sum on the right, there are uncountably many values of $t$, but only
finitely many of the summands are nonzero.  Note that the restriction to
$t<1$ is to avoid self-interactions; it corresponds to $x \ne y$ on the
left.) Thus, figuring out which energies can be attained amounts to
understanding what the possible pair correlation functions are. Which
constraints must they satisfy?

We have made an important trade-off here.  The dependence of energy on the
distance distribution is as simple as possible, because the right side of
\eqref{eq:EnergyAt} is a linear function of the variables $A_t$.  However,
the nonlinearity in this problem cannot simply disappear.  Instead, it
reappears in the question of which distance distributions occur for actual
point configurations.

There are some obvious constraints for an $N$-point code: $A_t \ge 0$ for
all $t$, $A_1=N$, and $\sum_t A_t = N^2$.  They follow trivially from the
definition
\[
A_t = \#\big\{ (x,y) \in \mC^2 \ |\  \langle x,y \rangle = t\big\}.
\]
Another obvious constraint is that $A_t$ must be an integer for each $t$, but
we will generally ignore this constraint, because optimization theory does
not handle integrality constraints as seamlessly as it handles inequalities.

There are also less obvious constraints, such as
\[
\sum_t A_t t \ge 0.
\]
To see why this inequality holds, note that
\[
\sum_t A_t t = \sum_{x,y \in \mC} \langle x,y \rangle,
\]
because $A_t$ counts how often $t$ occurs as an inner product between points
in $\mC$.  Thus,
\[
\sum_t A_t t = \sum_{x,y \in \mC} \langle x,y \rangle =
\left\langle\sum_{x \in C} x, \sum_{y \in C} y \right\rangle =
\left|\sum_{x \in C} x\right|^2 \ge 0.
\]
Recall that this is the inequality we used to analyze simplices at the end of
the second lecture.

Delsarte discovered an infinite sequence of linear inequalities generalizing
this one.\footnote{Delsarte's initial discovery was in a discrete setting
\cite{Delsarte1972}, but analogous techniques apply to spheres
\cite{DGS1977,KabatianskyLevenshtein1978}.} The factor of $t$ above is
replaced with certain special functions, namely \emph{Gegenbauer} or
\emph{ultraspherical polynomials}, which are a family $P^n_k$ of polynomials
in one variable with $\deg(P^n_k) = k$. The Delsarte inequalities then say
that whenever $A$ is the distance distribution of a configuration in
$S^{n-1}$,
\begin{equation} \label{eq:Geg1}
\sum_t A_t P^n_k(t) \ge 0
\end{equation}
for all $k$. In particular, $P^n_1(t)=t$, from which we recover the previous
inequality, and $P^n_0(t) = 1$, while the higher-degree polynomials depend
on $n$.

The Delsarte inequalities are far from a complete characterization of the
distance distributions of codes. However, they are particularly beautiful
and important constraints on these distance distributions.

An equivalent reformulation of \eqref{eq:Geg1} is that for every finite set
$\mC \subset S^{n-1}$,
\[
\sum_{x,y \in \mC} P^n_k(\langle x,y \rangle) \ge 0.
\]
We will return in \S\ref{sec:ultraspherical} to what ultraspherical
polynomials are and why they have this property.  In the meantime, we will
treat them as a black box while we explore how the Delsarte inequalities are
used to prove bounds.

\section{Linear programming bounds}

The energy of a code is given by the linear function
\[
\frac{1}{2} \sum_{-1 \le t < 1} f(2-2t) A_t
\]
of its distance distribution, and the Delsarte inequalities
\[
\sum_t A_t P^n_k(t) \ge 0
\]
are linear in $A$ as well.  The \emph{linear programming bounds} minimize
the energy subject to these linear constraints.\footnote{Recall that
``linear programming'' means optimizing a linear function subject to linear
constraints. There are efficient algorithms to solve finite linear
programs.} Because of the linearity, these bounds are particularly well
behaved and useful. The only computational difficulty is that there are
infinitely many variables $A_t$.

Let's write down the linear programming bounds more precisely. To begin
with, we are given the dimension $n$, number $N$ of points, and potential
function $f$.  Then linear programming bounds attempt to choose $A_t$ for
$-1 \le t \le 1$ so as to minimize
\[
\frac{1}{2} \sum_{-1 \le t < 1} A_t f(2-2t)
\]
subject to
\begin{align*}
A_1 &= N,\\
A_t &\ge 0 \text{ for $-1 \le t \le 1$},\\
\sum_t A_t &= N^2, \text{ and}\\
\sum_t A_t P^n_k(t) &\ge 0 \text{ for all $k \ge 1$.}
\end{align*}
This optimization problem gives us a lower bound for the energy of codes in
$S^{n-1}$, because every code has a corresponding distance distribution.
However, there is no reason to expect the bound to be sharp in general: the
optimal choice of $A_t$ will usually not even be integral, let alone come
from an actual code. Of course one could improve the bound by imposing
integrality, but then the optimization problem would become far less
tractable.  In particular, it would no longer be a convex optimization
problem.

Linear programming bounds are well suited to computer calculations, but they
have not yet been fully optimized.  Any given case can be solved numerically,
but the general pattern is unclear.  In particular, for most $n$, $N$, and
$f$ we do not know the optimal solution.

In practice, it is useful to apply linear programming duality, in which we
try to prove bounds on energy by taking linear combinations of the
constraints.  If we multiply the Delsarte inequalities
\[
\sum_t A_t P^n_k(t) \ge 0
\]
by constants $h_k$ and then sum over $k$, we obtain the following theorem.

\begin{theorem}[Yudin \cite{Yudin1993}] \label{thm:yudin}
Suppose $h = \sum_k h_k P^n_k$ with $h_k \ge 0$ for $k \ge 1$, and suppose
$h(t) \le f(2-2t)$ for $t \in [-1,1)$. Then every $N$-point configuration
$\mC$ on $S^{n-1}$ satisfies
\[
\sum\limits_{\substack{x,y
\in \mC\\ x \ne y}} f(|x-y|^2) \ge N^2 h_0 - N h(1).
\]
\end{theorem}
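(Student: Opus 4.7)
The plan is to apply linear programming duality directly, bounding the auxiliary sum $S := \sum_{x,y\in\mC} h(\langle x,y\rangle)$ from both sides. The two sides will give, respectively, a lower bound coming from the Delsarte inequalities and an upper bound coming from the pointwise inequality $h(t)\le f(2-2t)$.

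First I would expand $S$ using the ultraspherical expansion of $h$:
\[
S = \sum_{x,y\in\mC} h(\langle x,y\rangle) = \sum_{k\ge 0} h_k \sum_{x,y\in\mC} P^n_k(\langle x,y\rangle).
\]
The $k=0$ term is $h_0 N^2$ since $P^n_0=1$ and $|\mC|=N$. For each $k\ge 1$, the Delsarte inequality (the reformulation of \eqref{eq:Geg1} stated just before Theorem~\ref{thm:yudin}) gives $\sum_{x,y\in\mC} P^n_k(\langle x,y\rangle)\ge 0$, and by hypothesis $h_k\ge 0$, so every $k\ge 1$ term is nonnegative. Hence $S\ge h_0 N^2$.

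Next I would split $S$ into diagonal and off-diagonal contributions. The diagonal contributes $N h(1)$ because $\langle x,x\rangle=1$ for $x\in S^{n-1}$. For $x\ne y$ we have $\langle x,y\rangle\in[-1,1)$, so the hypothesis $h(t)\le f(2-2t)$ on $[-1,1)$ gives $h(\langle x,y\rangle)\le f(|x-y|^2)$ using $|x-y|^2=2-2\langle x,y\rangle$. Summing,
\[
S = N h(1) + \sum_{\substack{x,y\in\mC\\ x\ne y}} h(\langle x,y\rangle) \le N h(1) + \sum_{\substack{x,y\in\mC\\ x\ne y}} f(|x-y|^2).
\]
Combining the two bounds on $S$ yields the claim.

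There is no real obstacle here, since the Delsarte inequalities are black-boxed from \eqref{eq:Geg1}; the only thing to be careful about is correctly handling the diagonal term (where $t=1$ falls outside the domain $[-1,1)$ on which the pointwise hypothesis on $h$ is imposed), which is exactly why the $-Nh(1)$ correction appears on the right-hand side. The argument is the prototypical LP-duality argument: the pair $(h_k)_{k\ge 0}$ is a feasible dual vector certifying the primal lower bound.
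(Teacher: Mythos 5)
Your proof is correct and is essentially the paper's own argument: both rest on bounding the double sum $\sum_{x,y\in\mC}h(\langle x,y\rangle)$ from below by $N^2h_0$ via the Delsarte inequalities and nonnegativity of the $h_k$, and from above by the energy plus $Nh(1)$ via the pointwise comparison $h(t)\le f(2-2t)$ off the diagonal. The only difference is presentational (you bound the auxiliary sum from both sides, while the paper writes the same steps as a single chain of inequalities starting from the energy), and your remark about the diagonal term $t=1$ lying outside $[-1,1)$ is exactly the right point of care.
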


The auxiliary function $h$ is generally a polynomial, in which case $h_k=0$
for all sufficiently large $k$, but convergence of $\sum_k h_k P^n_k$ on
$[-1,1]$ suffices.  (It turns out that $|P^n_k| \le P^n_k(1)$ on $[-1,1]$,
and hence the convergence is automatically absolute and uniform.)

\begin{proof}
We have
\begin{align*}
\sum\limits_{\substack{x,y
\in \mC\\ x \ne y}} f(|x-y|^2) &\ge \sum\limits_{\substack{x,y
\in \mC\\ x \ne y}} h(\langle x,y \rangle) \qquad \text{(because $f(2-2t) \ge h(t)$ pointwise)}\\
&= \sum_{x,y \in \mC} h(\langle x,y \rangle) - Nh(1)\\
&= N^2h_0 - Nh(1) + \sum_{k\ge 1} h_k \sum_{x,y \in \mC} P^n_k(\langle x,y \rangle)\\
&\ge N^2h_0 - Nh(1),
\end{align*}
as desired.
\end{proof}

Note that the proof rests on the fundamental inequality
\[
\sum_{x,y \in \mC} P^n_k(\langle x,y \rangle) \ge 0.
\]
The proof technique might seem extraordinarily wasteful, since it involves
throwing away many terms in our sum. However, $P^n_k(\langle x,y \rangle)$
averages to zero over the whole sphere when $k \ge 1$, which suggests that
the double sums
\[
\sum_{x,y \in \mC} P^n_k(\langle x,y \rangle)
\]
may not be so large after all when $\mC$ is well distributed over the
sphere.

Theorem~\ref{thm:yudin} tells us that to prove a lower bound for $f$-energy,
all we need is a lower bound $h$ for the potential function $f$ such that $h$
has non-negative ultraspherical coefficients.  Such an auxiliary function is
a convenient certificate for a lower bound.

Outside of a few special cases, nobody knows the optimal $h$ for a given $f$.
However, numerical optimization is an effective way to compute approximations
to it. One can use more sophisticated techniques such as sums of squares and
semidefinite programming, but even the most straightforward approach works
well in practice: let $h$ be a polynomial of degree $d$, and instead of
imposing the inequality $h(t) \le f(2-2t)$ for all $t$, impose it just at
finitely many locations (chosen fairly densely in $[-1,1)$, of course). Then
we are left with a finite linear program, i.e., a linear optimization problem
with only finitely many variables and constraints, which is easily solved
numerically using standard software. The resulting auxiliary function $h$
might not satisfy $h(t) \le f(2-2t)$ everywhere, but any violations will be
small, and we can eliminate them by adjusting the constant term $h_0$ without
substantially changing the energy bound.

\section{Applying linear programming bounds}

Linear programming bounds are behind almost every case in which universal
optimality, or indeed any sharp bound on energy, is known.  As mentioned
above, they are generally far from sharp, but for certain codes they
miraculously give sharp bounds.  This is the case for all the universal
optima listed in Table~\ref{tab:univopt} from the second lecture.

When could the bound be sharp for a configuration $\mC$?  Equality holds in
Theorem~\ref{thm:yudin} iff every term we throw away in the proof is
actually already zero.  Inspecting the proof leads to the following
criteria:

\begin{lemma} \label{lemma:sharp}
The energy lower bound in Theorem~\ref{thm:yudin} is attained by a code
$\mC$ if and only if $f(|x-y|^2) = h(\langle x,y \rangle)$ for all $x,y \in
\mC$ with $x \ne y$, and
\[
\sum_{x,y \in \mC} P^n_k(\langle x,y \rangle) = 0
\]
for all $k \ge 1$ for which $h_k > 0$.
\end{lemma}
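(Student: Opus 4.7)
The plan is to retrace the chain of inequalities in the proof of Theorem~\ref{thm:yudin} and determine exactly when each step is an equality, since the final bound is sharp iff every intermediate inequality is sharp.

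First I would recall that the proof of Theorem~\ref{thm:yudin} used exactly two inequalities: the pointwise bound $f(|x-y|^2) \ge h(\langle x,y\rangle)$ (summed over pairs $x \ne y$ in $\mC$), and the Delsarte positivity $\sum_{x,y \in \mC} P^n_k(\langle x,y\rangle) \ge 0$ (weighted by the coefficients $h_k \ge 0$ for $k \ge 1$). Writing the energy as
\[
\sum_{\substack{x,y \in \mC\\ x \ne y}} f(|x-y|^2) \;=\; \sum_{\substack{x,y \in \mC\\ x \ne y}} h(\langle x,y \rangle) \;+\; \sum_{\substack{x,y \in \mC\\ x \ne y}} \bigl(f(|x-y|^2) - h(\langle x,y\rangle)\bigr),
\]
and then expanding the first term via $h = \sum_k h_k P^n_k$ and re-including the diagonal, one gets exactly
\[
\sum_{\substack{x,y \in \mC\\ x \ne y}} f(|x-y|^2) \;=\; N^2 h_0 - N h(1) \;+\; \sum_{k \ge 1} h_k \sum_{x,y \in \mC} P^n_k(\langle x,y \rangle) \;+\; \sum_{\substack{x,y \in \mC\\ x \ne y}} \bigl(f(|x-y|^2) - h(\langle x,y\rangle)\bigr).
\]
This rewrites the energy as the lower bound plus two manifestly non-negative terms.

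Next I would observe that equality in Theorem~\ref{thm:yudin} is therefore equivalent to the vanishing of both non-negative terms. The last term is a sum of non-negative summands $f(|x-y|^2) - h(\langle x,y\rangle) \ge 0$, so it vanishes iff each summand vanishes, i.e.\ iff $f(|x-y|^2) = h(\langle x,y\rangle)$ for all distinct $x,y \in \mC$. The preceding sum $\sum_{k \ge 1} h_k \sum_{x,y} P^n_k(\langle x,y\rangle)$ is a sum of non-negative products (by the hypothesis $h_k \ge 0$ and the Delsarte inequality), so it vanishes iff each product vanishes, i.e.\ iff $\sum_{x,y \in \mC} P^n_k(\langle x,y\rangle) = 0$ whenever $h_k > 0$.

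There is essentially no obstacle here: the whole argument is just a careful bookkeeping of the inequalities already used, so the proof is ``equality holds throughout iff equality holds at each step.'' The only thing to be slightly careful about is interchanging the sum $\sum_{k} h_k P^n_k$ with the finite double sum over $\mC$, which is legitimate because the sum over $\mC$ has only finitely many terms (and the ultraspherical expansion converges absolutely on $[-1,1]$, as noted after the theorem).
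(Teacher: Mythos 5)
Your proposal is correct and matches the paper's argument: the paper likewise obtains Lemma~\ref{lemma:sharp} by inspecting the proof of Theorem~\ref{thm:yudin} and noting that equality holds iff every discarded term is already zero. Your explicit decomposition of the energy into the bound plus two manifestly nonnegative terms is just a more detailed write-up of the same bookkeeping.
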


The first condition says that $h(t) = f(2-2t)$ whenever $t = \langle x,y
\rangle$ with $x,y \in \mC$ and $x \ne y$.  Because $h(t) \le f(2-2t)$ for
all $t$, the functions $h$ and $f$ cannot cross.  Instead, they must agree to
order at least $2$ whenever they touch.

In practice, sharp bounds are usually obtained in the simplest possible way
based on this tangency constraint. We choose $h$ to be a polynomial of as low
a degree as possible subject to agreeing with $f$ to order $2$ at each inner
product that occurs between distinct points in $\mC$.  This specifies a
choice of $h$, but it is not obvious that it has any of the desired
properties. For example, the inequality $h(t) \le f(2-2t)$ might be violated
in between the points at which we force equality, and there is no obvious
reason to expect the ultraspherical coefficients $h_k$ to be nonnegative.

This construction of $h$ is generally far from optimal when it works at all,
but for particularly beautiful codes it does remarkably well at proving sharp
bounds.  For example, let's show that regular simplices are universally
optimal, which was Exercise~\ref{ex:simplicesunivopt} from the second
lecture. Recall that for $N \le n+1$, the $N$-point regular simplex $\mC$ in
$S^{n-1}$ has all inner products equal to $-1/(N-1)$.

\begin{proposition}
For $N \le n+1$, the $N$-point regular simplex is universally optimal in
$S^{n-1}$.
\end{proposition}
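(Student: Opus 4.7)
The plan is to apply Yudin's linear programming bound (Theorem~\ref{thm:yudin}) with an auxiliary function $h$ of degree $1$ that kisses the graph of $t \mapsto f(2-2t)$ at the unique inner product $t_0 = -1/(N-1)$ occurring between distinct vertices of a regular simplex on $S^{n-1}$ (as computed before Proposition~\ref{prop:simplex}).

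Concretely, first I would set $g(t) = f(2-2t)$ and take $h$ to be the tangent line to $g$ at $t_0$, i.e.\ $h(t) = g(t_0) + g'(t_0)(t-t_0)$. Because $f$ is completely monotonic, $f$ is decreasing and convex; hence $g'(t) = -2 f'(2-2t) \ge 0$ and $g''(t) = 4f''(2-2t) \ge 0$. Convexity of $g$ makes its tangent line lie weakly below the graph, so $h(t) \le f(2-2t)$ on $[-1,1]$, and the fact that $g$ is nondecreasing gives $h_1 = g'(t_0) \ge 0$. Writing $h = h_0 P_0^n + h_1 P_1^n$ (using $P_0^n(t)=1$ and $P_1^n(t)=t$), all hypotheses of Theorem~\ref{thm:yudin} are satisfied, and we obtain
\[
\sum_{\substack{x,y\in\mC\\ x\ne y}} f(|x-y|^2) \;\ge\; N^2 h_0 - N h(1)
\]
for every $N$-point configuration $\mC\subset S^{n-1}$.

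Next I would verify that this bound is attained by a regular simplex $\mC_0$. Since $\sum_{x\in\mC_0} x = 0$, expanding $\sum_{x,y\in\mC_0} h(\langle x,y\rangle)$ in the basis $\{P_0^n,P_1^n\}$ yields $h_0 N^2 + h_1|\sum x|^2 = h_0 N^2$. On the other hand, breaking off the diagonal gives $\sum_{x,y}h(\langle x,y\rangle) = Nh(1) + N(N-1)h(t_0)$. Combining these, $N^2 h_0 - N h(1) = N(N-1)h(t_0) = N(N-1)f(2-2t_0)$, which is exactly the $f$-energy of $\mC_0$. Thus the Yudin bound is sharp and the regular simplex is universally optimal.

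There is no real obstacle here: once one has the Yudin framework, the only thing to check is that the two degree-$1$ conditions (pass through $(t_0,g(t_0))$ with slope $g'(t_0)$) produce an $h$ whose coefficients $h_0,h_1$ are controlled by the right signs and whose graph lies below $g$. The mild subtlety is remembering that the complete monotonicity hypothesis is used twice --- once for the sign $h_1 \ge 0$ required by Yudin, and once for the pointwise inequality $h \le g$ via convexity --- and that the centroid identity $\sum_{x\in\mC_0} x = 0$ (itself a consequence of $N\le n+1$) is precisely what makes the linear term in $h$ contribute nothing to the double sum.
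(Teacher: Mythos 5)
Your proposal is correct and is essentially the paper's own proof: the same tangent-line auxiliary function at $t_0=-1/(N-1)$, the same use of convexity for $h\le g$ and of monotonicity for $h_1\ge 0$, and the same sharpness check via $\sum_{x\in\mC_0}x=0$. The only cosmetic difference is that the paper phrases the equality case through Lemma~\ref{lemma:sharp} and observes explicitly that the argument yields the stronger conclusion for all decreasing convex potentials, which your computation also establishes.
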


We'll describe the proof in terms of linear programming bounds, but one could
reword it to use just the inequality
\[
\left|\sum_{x \in \mC} x \right|^2 \ge 0
\]
(as was intended in Exercise~\ref{ex:simplicesunivopt} from the second
lecture).

\begin{proof}
We will show that the simplex in fact minimizes energy for every decreasing,
convex potential function $f$, which is an even stronger property than
universal optimality.  Let $h(t)$ be the tangent line to $f(2-2t)$ at
$t=-1/(N-1)$; in other words,
\[
h(t) = f\big(2+2/(N-1)\big) - 2 f'\big(2+2/(N-1)\big)\big(t + 1/(N-1)\big).
\]
This function is the lowest-degree polynomial that agrees with $f(2-2t)$ to
order $2$ at all the inner products occurring in the regular simplex, which
makes it a special case of the construction outlined above.

Because $f$ is convex, $h(t) \le f(2-2t)$ for all $t$.  Thus, the first
inequality we need for $h$ does in fact hold.  To check the nonnegativity of
the ultraspherical coefficients (aside from the constant term), note that
the first two ultraspherical polynomials are $1$ and $t$.  If we express
$h(t)$ in terms of this basis, then the coefficient of $t$ is $- 2
f'\big(2+2/(N-1)\big)$, which is nonnegative since $f$ is decreasing.  Thus,
$h$ satisfies the hypotheses of Theorem~\ref{thm:yudin}. Furthermore,
$h(t)=f(2-2t)$ when $t = -1/(N-1)$ by construction, and
\[
\sum_{x,y \in \mC} \langle x,y \rangle = \left|\sum_{x \in \mC} x \right|^2 = 0.
\]
These are the conditions for a sharp bound in Lemma~\ref{lemma:sharp}, and
so we conclude that our energy bound is equal to the energy of the regular
simplex. Hence regular simplices minimize energy for all decreasing, convex
potential functions, and in particular they are universally optimal.
\end{proof}

Codes with more inner products are more complicated to handle, but in any
given case one can figure out whether this approach works.  If one analyzes
the technique in sufficient generality, it proves the following theorem,
which extends a theorem of Levenshtein \cite{Levenshtein1992}.

\begin{theorem}[Cohn and Kumar \cite{CohnKumar2007}]
\label{thm:cohnkumar2007}
Every $m$-distance set that is a spherical $(2m-1)$-design is universally
optimal.
\end{theorem}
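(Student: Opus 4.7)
The plan is to apply the linear programming bound of Theorem~4.2 with an auxiliary function $h$ built by Hermite interpolation at the distances occurring in $\mC$, and then to verify sharpness for $\mC$ via Lemma~4.3 using the $(2m-1)$-design hypothesis.

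Setup. Let $t_1 < \cdots < t_m \in [-1, 1)$ be the inner products occurring between distinct points of $\mC$, and fix a completely monotonic potential $f$. Let $h$ be the unique polynomial of degree at most $2m - 1$ satisfying the $2m$ Hermite conditions
\[
h(t_i) = f(2 - 2t_i), \qquad h'(t_i) = -2 f'(2 - 2t_i), \qquad i = 1, \dots, m.
\]
First I would check the pointwise inequality $h(t) \le f(2 - 2t)$ on $[-1, 1]$. By construction, $g(t) = f(2-2t) - h(t)$ has a double root at each $t_i$, and the Hermite remainder formula gives
\[
g(t) = \frac{2^{2m} f^{(2m)}(\xi_t)}{(2m)!} \prod_{i=1}^m (t - t_i)^2
\]
for some $\xi_t$. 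Complete monotonicity forces $(-1)^{2m} f^{(2m)} = f^{(2m)} \ge 0$, so $g \ge 0$ and the pointwise bound holds.

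The main obstacle is showing that the ultraspherical expansion $h = \sum_{k \ge 0} h_k P_k^n$ has $h_k \ge 0$ for all $k \ge 1$. Since $\deg h \le 2m-1$, only finitely many $h_k$ are nonzero, but their signs are not obvious from the defining conditions. The plan here is to exploit the rigidity of $m$-distance $(2m-1)$-designs: their inner products $t_i$ are forced to be the nodes of a Gauss-type quadrature adapted to the Gegenbauer inner product, which lets one write each $h_k$ (for $1 \le k \le 2m-1$) as a linear combination of the values $f(2-2t_i)$ and $f'(2-2t_i)$ with coefficients whose signs can be controlled. Combined with $f \ge 0$ and $f' \le 0$ from complete monotonicity, this delivers $h_k \ge 0$. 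This is the Levenshtein-style ingredient, and extracting it in the required generality is where the real work lies.

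Finally, Theorem~4.2 applied to this $h$ yields the lower bound $N^2 h_0 - N h(1)$ on the $f$-energy of any $N$-point code in $S^{n-1}$, where $N = |\mC|$. To conclude universal optimality of $\mC$, I would verify the two sharpness conditions of Lemma~4.3. The identity $h(\langle x, y\rangle) = f(|x-y|^2)$ for distinct $x, y \in \mC$ holds by construction, since every such inner product equals some $t_i$. The design property is equivalent, by the addition formula for spherical harmonics, to $\sum_{x,y \in \mC} P_k^n(\langle x, y\rangle) = 0$ for $1 \le k \le 2m - 1$, which covers every $k$ with possibly nonzero $h_k$ (those with $k \ge 2m$ vanish by the degree bound). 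Hence $\mC$ attains the bound for every completely monotonic $f$, proving universal optimality.
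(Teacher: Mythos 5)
Your overall strategy is exactly the one the paper outlines (and the one used in \cite{CohnKumar2007}): build $h$ by Hermite interpolation of $t \mapsto f(2-2t)$ to order $2$ at the $m$ inner products, get the pointwise inequality from the interpolation remainder formula together with $f^{(2m)} \ge 0$, and then verify the two sharpness conditions of Lemma~\ref{lemma:sharp}, using the fact that the $(2m-1)$-design property is equivalent to $\sum_{x,y \in \mC} P^n_k(\langle x,y\rangle) = 0$ for $1 \le k \le 2m-1$, which covers every index where $h_k$ could be nonzero since $\deg h \le 2m-1$. All of that is correct and is the same route as the paper's.

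The genuine gap is the step you yourself flag: the nonnegativity of the ultraspherical coefficients $h_k$ for $1 \le k \le 2m-1$. This is not a technicality to be outsourced to a ``Levenshtein-style ingredient''; it is the heart of the theorem, and your sketch of it points in a direction that cannot work as stated. You propose to control the signs of the $h_k$ using only $f \ge 0$ and $f' \le 0$. If that were enough, the theorem would hold for every decreasing nonnegative potential with $f^{(2m)} \ge 0$, which is false beyond $m=1$: the paper notes that the triangular bipyramid and the $D_4$ root system fail to be universally optimal even though they are perfectly reasonable equilibria, and the known proof genuinely uses \emph{all} the derivatives $f^{(j)}$ for $0 \le j \le 2m$. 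Concretely, in \cite{CohnKumar2007} one writes the Hermite interpolant in Newton divided-difference form; absolute monotonicity of $t \mapsto f(2-2t)$ (equivalent to complete monotonicity of $f$) makes every divided difference nonnegative, and the remaining work is to show that each partial product $\prod_{i \le j}(t-t_i)$ (with appropriate multiplicities) is positive definite, i.e., has nonnegative ultraspherical coefficients. That last positivity is proved by an induction exploiting the special position of the $t_i$ forced by the design property (they interlace with roots of adjacent orthogonal polynomials), together with the fact that products of positive-definite functions are positive definite. Your quadrature observation (the design property does yield a positive-weight quadrature rule at the nodes $1, t_1, \dots, t_m$ exact to degree $2m-1$) is a true and relevant consequence, but it does not by itself deliver the sign control you need on the $h_k$. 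Until this step is supplied, the proof is incomplete.
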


Here an \emph{$m$-distance set} is a set in which $m$ distances occur
between distinct points, and a \emph{spherical $k$-design} is a finite
subset $\mathcal{D}$ of the sphere $S^{n-1}$ such that for every polynomial
$p \colon \R^n \to \R$ of total degree at most $k$, the average of $p$ over
$\mathcal{D}$ is equal to its average over the entire sphere $S^{n-1}$. In
other words, averaging at the points of $\mathcal{D}$ is an exact numerical
integration formula for polynomials up to degree $k$, which means these
points are exceedingly well distributed over the sphere.

This theorem suffices to handle every known universal optimum on the surface
of a sphere (see Table~\ref{tab:univopt} in the second lecture) except the
regular $600$-cell, which is dealt with in \S7 of \cite{CohnKumar2007}.
Surely that's not the only exception, but it is unclear where to find other
universal optima that go beyond Theorem~\ref{thm:cohnkumar2007}.

\section{Spherical codes and the kissing problem}

Recall that the spherical code problem asks whether $N$ points can be
arranged on $S^{n-1}$ so that no two are closer than angle $\theta$ to each
other along the great circle connecting them.  In other words, the
\emph{minimal angle} between the points is at least $\theta$.  This is a
packing problem: how many spherical caps of angular radius $\theta/2$ can we
pack on the surface of a sphere?

The most famous special case is the kissing problem discussed in the first
lecture.  Given a central unit ball, the kissing problem asks how many
non-overlapping unit balls can be arranged tangent to it.  Equivalently, the
points of tangency should form a spherical code with minimal angle at least
$60^\circ$ (see Figure~\ref{fig:kiss}).

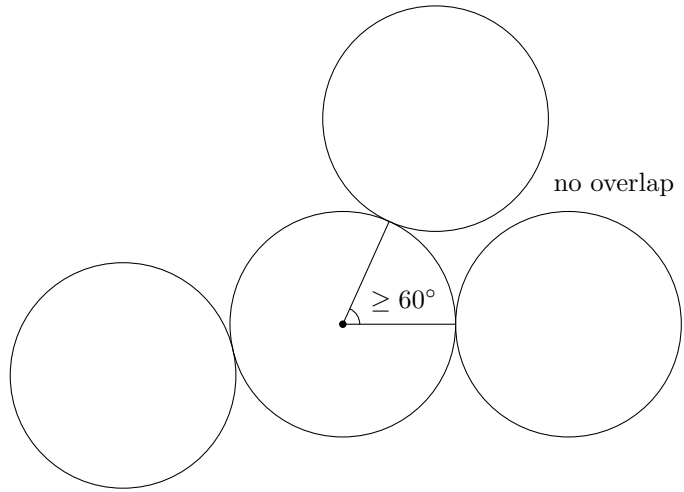
\begin{figure}
\begin{center}
\begin{tikzpicture}[scale=1.5]
  \draw (0,0) circle (1);
  \draw (2,0) circle (1);
  \draw (0.822087,1.823231) circle (1);
  \draw (-1.947645,-0.454404) circle (1);
  \draw (0,0)--(1,0);
  \draw (0,0)--(0.411043,0.911615);
  \fill (0,0) circle (0.03333);
  \draw (0.15,0) arc (0:65.729:0.15) ;
  \draw (0.54,0.21) node (a) {${\ge 60^\circ}$};
  \draw (2.4,1.24) node (b) {no overlap};
\end{tikzpicture}
\end{center}
\caption{An angle of $60^\circ$ or more between tangent spheres is equivalent
to avoiding overlap between them.} \label{fig:kiss}
\end{figure}

Linear programming bounds apply to this problem.  In fact, packing problems
were the original application for these bounds \cite{Delsarte1972}, before
Yudin applied them to energy minimization \cite{Yudin1993}.

\begin{theorem} \label{thm:sphercode}
Suppose $h = \sum_k h_k P^n_k$ with $h_k \ge 0$ for $k \ge 0$ and $h_0>0$,
and suppose $h(t) \le 0$ for $t \in [-1,\cos \theta]$. Then every code $\mC$
in $S^{n-1}$ with minimal angle at least $\theta$ satisfies
\[
|\mC| \le h(1)/h_0.
\]
\end{theorem}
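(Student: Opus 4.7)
The plan is to mimic the proof of Theorem~\ref{thm:yudin}, but now bound the cardinality $|\mC|$ instead of the energy by sandwiching the double sum
\[
\Sigma := \sum_{x,y \in \mC} h(\langle x,y \rangle)
\]
from above and below. The two inequalities in the hypothesis are tailored precisely to control the two sides of this sandwich.

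For the upper bound, I would split $\Sigma$ into the diagonal $x=y$ and the off-diagonal $x\ne y$. On the diagonal, $\langle x,x\rangle = 1$, contributing $|\mC|\, h(1)$. For $x \ne y$ in $\mC$, the minimal angle condition gives $\langle x,y\rangle \le \cos\theta$, so the hypothesis $h(t) \le 0$ on $[-1,\cos\theta]$ yields $h(\langle x,y\rangle) \le 0$. Hence $\Sigma \le |\mC|\, h(1)$.

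For the lower bound, I expand $h = \sum_k h_k P^n_k$ and interchange sums to get
\[
\Sigma = \sum_k h_k \sum_{x,y \in \mC} P^n_k(\langle x,y \rangle).
\]
The $k=0$ term is $h_0 |\mC|^2$ because $P^n_0 \equiv 1$. For $k \ge 1$, the fundamental positivity
\[
\sum_{x,y \in \mC} P^n_k(\langle x,y \rangle) \ge 0
\]
(the same Delsarte inequality used to prove Theorem~\ref{thm:yudin}) combined with $h_k \ge 0$ shows every remaining term is nonnegative. Hence $\Sigma \ge h_0 |\mC|^2$.

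Combining the two bounds gives $h_0 |\mC|^2 \le |\mC|\, h(1)$, and dividing by $h_0 |\mC| > 0$ (using $h_0 > 0$ and the trivial case $\mC = \emptyset$ separately) yields $|\mC| \le h(1)/h_0$. The only subtlety worth flagging is convergence of $\sum_k h_k P^n_k$ when $h$ is not a polynomial, so that the sum-integral interchange above is justified; the remark following Theorem~\ref{thm:yudin} that $|P^n_k| \le P^n_k(1)$ on $[-1,1]$ makes the series absolutely and uniformly convergent, so this is not a genuine obstacle.
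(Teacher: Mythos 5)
Your proof is correct and is essentially identical to the paper's own argument: both sandwich the double sum $\sum_{x,y\in\mC} h(\langle x,y\rangle)$ between $|\mC|\,h(1)$ (via the diagonal terms plus nonpositivity of $h$ on $[-1,\cos\theta]$) and $|\mC|^2 h_0$ (via the Delsarte inequalities). Your added remarks on the empty-code case and on uniform convergence of $\sum_k h_k P^n_k$ are fine elaborations of details the paper leaves implicit.
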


\begin{proof}
We have
\[
|\mC| h(1) \ge \sum_{x,y \in \mC} h(\langle x,y \rangle) = \sum_k h_k \sum_{x,y \in \mC} P^n_k(\langle x,y \rangle) \ge |\mC|^2 h_0. \qedhere
\]
\end{proof}

As in the case of energy minimization, this bound is generally not sharp, but
on rare occasions we are lucky enough to get a sharp bound.  The most famous
case is the kissing problem in $\R^8$ and $\R^{24}$, which was solved
independent by Levenshtein \cite{Levenshtein1979} and Odlyzko and Sloane
\cite{OdlyzkoSloane1979}.  In particular, the kissing number is $240$ in
$\R^8$ and $196560$ in $\R^{24}$, as achieved by the $E_8$ lattice and the
Leech lattice.  It is not so difficult to prove these upper bounds using
Theorem~\ref{thm:sphercode}.  In particular, we take
\[
h(t) = (t+1)(t+1/2)^2t^2(t-1/2)
\]
in the $\R^8$ case, and
\[
h(t) = (t+1)(t+1/2)^2(t+1/4)^2t^2(t-1/4)^2(t-1/2)
\]
in the $\R^{24}$ case. (The roots correspond to the inner products that
occur in the kissing configurations.)  Checking that these polynomials
satisfy the hypotheses of Theorem~\ref{thm:sphercode} and prove sharp bounds
is a finite calculation.  Of course presenting it this way makes the proof
look like a miracle, and explaining it conceptually requires a deeper
analysis \cite{Levenshtein1992}.

\section{Ultraspherical polynomials}
\label{sec:ultraspherical}

So far, we have treated ultraspherical polynomials as a black box and taken
the Delsarte inequalities on faith.  In this section, we will finally examine
where these polynomials come from and why the inequalities hold.

One simple (albeit unmotivated) description is that ultraspherical
polynomials for $S^{n-1}$ are \emph{orthogonal polynomials} with respect to
the measure $(1-t^2)^{(n-3)/2} \, dt$ on $[-1,1]$. In other words,
\[
\int_{-1}^1 P^n_k(t) P^n_\ell(t) (1-t^2)^{(n-3)/2} \, dt = 0
\]
for $k \ne \ell$. Equivalently, $P^n_k$ is orthogonal to all polynomials of
degree less than $k$ with respect to this measure, because all such
polynomials are linear combinations of $P^n_0,\dots,P^n_{k-1}$.  We'll see
shortly where the measure comes from and why this orthogonality characterizes
the ultraspherical polynomials, but first let's explore its consequences.

Orthogonality uniquely determines the ultraspherical polynomials up to
scaling (and the scaling is irrelevant for our purposes, as long as we take
$P^n_k(1) > 0$ so as not to flip the Delsarte inequality).  Specifically, we
just apply Gram-Schmidt orthogonalization to $1, t, t^2, \dots$, which gives
an algorithm to compute these polynomials explicitly.  It's not the most
efficient method, but it works.

Although orthogonality may sound like an arcane property of a sequence of
polynomials, it has many wonderful and surprising consequences. For example,
it implies that $P^n_k$ has $k$ distinct roots in $[-1,1]$. To see why,
suppose $P^n_k$ changed sign at only $m$ points $r_1,\dots,r_m$ in $[-1,1]$,
with $m<k$. Then the polynomial
\[
P^n_k(t) (t-r_1) \dots (t-r_m)
\]
would never change sign on $[-1,1]$, which would contradict
\[
\int_{-1}^1 P^n_k(t) (t-r_1) \dots (t-r_m) (1-t^2)^{(n-3)/2} \,dt = 0
\]
(which holds because $(t-r_1)\dots(t-r_m)$ has degree less than $k$).  Thus,
$m=k$ and $P^n_k$ has $k$ distinct roots in $[-1,1]$, which means it's a
highly oscillatory function.

Although ultraspherical polynomials can be characterized via orthogonality,
it's not really a satisfactory explanation of where they come from.  To
explain that, we will use spherical harmonics.  Recall that as a
representation of $O(n)$, we can decompose $L^2(S^{n-1})$ as
\[
L^2(S^{n-1}) = \hatbigoplus_{k \ge 0} W_k,
\]
where $W_k$ consists of degree $k$ spherical harmonics.

We can obtain ultraspherical polynomials by studying the \emph{evaluation
map}: let $x \in S^{n-1}$, and consider the linear map that takes $f \in W_k$
to $f(x)$.  By duality for finite-dimensional vector spaces, this map must be
the inner product with some unique element $w_{k,x}$ of $W_k$, called a
\emph{reproducing kernel}. That is,
\[
f(x) = \langle w_{k,x},f \rangle
\]
for all $f \in W_k$.  Note that here $\langle \cdot,\cdot\rangle$ denotes
the inner product on $L^2(S^{n-1})$.  We will use the same notation for both
this inner product and the standard inner product on $\R^n$; to distinguish
between them, pay attention to which vector spaces their arguments lie in.

The function $w_{k,x}$ on $S^{n-1}$ has considerable structure.  For example,
it is invariant under all symmetries of $S^{n-1}$ that fix $x$:

\begin{lemma}
If $T$ is an element of $O(n)$ such that $Tx=x$, then $T w_{k,x} = w_{k,x}$.
\end{lemma}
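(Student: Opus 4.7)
The plan is to translate the claimed invariance of $w_{k,x}$ into a statement about evaluation, using the defining property of the reproducing kernel, and then exploit the fact that $O(n)$ acts unitarily on $L^2(S^{n-1})$ and preserves each $W_k$.

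First I would fix $T \in O(n)$ with $Tx = x$ and an arbitrary $f \in W_k$, and compute $\langle T w_{k,x}, f \rangle$. Since $W_k$ is an $O(n)$-subrepresentation, $T w_{k,x}$ again lies in $W_k$, so it makes sense to pair it against $f$ via the $L^2$ inner product. The key move is unitarity: because the $O(n)$-action on $L^2(S^{n-1})$ preserves the inner product, $\langle T w_{k,x}, f \rangle = \langle w_{k,x}, T^{-1} f \rangle$. Now I would invoke the reproducing property to rewrite this as $(T^{-1}f)(x)$.

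Next I would unpack the action formula $(gf)(y) = f(g^{-1} y)$ from the previous lecture to get $(T^{-1} f)(x) = f(Tx)$, and then use the hypothesis $Tx = x$ to conclude $(T^{-1} f)(x) = f(x) = \langle w_{k,x}, f \rangle$. Combining the two equalities yields
\[
\langle T w_{k,x}, f \rangle = \langle w_{k,x}, f \rangle \quad \text{for every } f \in W_k.
\]
Finally, since $T w_{k,x} - w_{k,x}$ lies in $W_k$ and is orthogonal to every element of $W_k$, it must be zero; equivalently, uniqueness of the reproducing kernel (which is just the Riesz representation theorem in the finite-dimensional space $W_k$) forces $T w_{k,x} = w_{k,x}$.

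There isn't really a hard step here: the argument is a standard application of uniqueness of reproducing kernels, and the only thing to be careful about is keeping the convention $(gf)(y) = f(g^{-1}y)$ straight so that the $T$ and $T^{-1}$ land on the correct sides. The substance of the lemma is really just that the reproducing kernel at $x$ is canonically attached to $x$, so it must inherit the stabilizer of $x$ as a symmetry group — the proof above makes that heuristic precise.
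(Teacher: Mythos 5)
Your argument is correct and is essentially identical to the paper's proof: both use unitarity of the $O(n)$-action to move $T$ across the inner product, the reproducing property to convert pairings into evaluations, and the hypothesis $Tx=x$ together with nondegeneracy of the inner product on $W_k$ to conclude. You have merely written the chain of equalities in the opposite order.
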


\begin{proof}
This lemma follows easily from the invariance of the inner product on $W_k$
under $O(n)$.  We have $\langle w_{k,x},f \rangle = \langle Tw_{k,x}, f
\rangle$ for all $f \in W_k$, because
\[
\langle w_{k,x},f \rangle = f(x) = f(Tx) = (T^{-1}f)(x) = \langle w_{k,x},T^{-1} f \rangle =  \langle Tw_{k,x}, f \rangle,
\]
and hence $w_{k,x} = Tw_{k,x}$.
\end{proof}

Equivalently, $w_{k,x}(y)$ can depend only on the distance between $x$ and
$y$, and therefore it must be a function of $\langle x,y \rangle$ alone.  We
define $P^n_k$ by
\[
w_{k,x}(y) = P^n_k(\langle x,y \rangle).
\]
The reproducing kernel $w_{k,x}$ is a polynomial of degree $k$ in several
variables, because it is a spherical harmonic in $W_k$, and thus $P^n_k$
must be a polynomial of degree $k$ in one variable.  (Technically this
definition is off by a constant factor from the special case $P^n_1(t)=t$
mentioned earlier, but we could easily rectify that by rescaling so that
$P^n_k(1)=1$.)

We have finally explained where ultraspherical polynomials come from.  They
describe reproducing kernels for the spaces $W_k$, and the importance of
reproducing kernels is that they tell how to evaluate spherical harmonics at
points.

The drawback of the reproducing kernel definition is that it does not make
it clear how to compute these polynomials in any reasonable way.  In
principle one could choose a basis for the homogeneous harmonic polynomials
of degree $k$, integrate over the sphere to obtain the inner products of the
basis vectors in $W_k$, write down the evaluation map explicitly relative to
this basis, and solve simultaneous linear equations to obtain the
reproducing kernel.  However, that would be unpleasantly cumbersome.  The
beauty of the orthogonal polynomial characterization of ultraspherical
polynomials is that it is much more tractable, but we must still see why it
is true.

First, observe that $w_{k,x}$ and $w_{\ell,x}$ are orthogonal in
$L^2(S^{n-1})$ for $k \ne \ell$, since they are spherical harmonics of
different degrees. Thus,
\begin{equation} \label{eq:orthogsurface}
\int_{S^{n-1}} P^n_k(\langle x,y \rangle) P^n_\ell(\langle x,y \rangle) \, d\mu(y) = 0,
\end{equation}
where $\mu$ is surface measure.  We can now obtain the orthogonality of the
ultraspherical polynomials from the following multivariate calculus
exercise:

\begin{exercise}
Prove that under orthogonal projection from the surface of the sphere onto a
coordinate axis, the measure $\mu$ projects to a constant times the measure
$(1-t^2)^{(n-3)/2} \, dt$ on $[-1,1]$.  (See \cite[p.~2434]{Cohn2010} for a
simple solution.)
\end{exercise}

If we apply this orthogonal projection onto the axis between the antipodal
points $\pm x$, then \eqref{eq:orthogsurface} becomes
\[
\int_{-1}^1 P^n_k(t) P^n_\ell(t) (1-t^2)^{(n-3)/2} \, dt = 0,
\]
as desired.

As a side comment, we can now see that $W_k$ is an irreducible representation
of $O(n)$.  If it broke up further, then each summand would have its own
reproducing kernel, which would yield two different polynomials of degree $k$
that would be orthogonal to each other as well as to lower degree
polynomials. That's impossible, since the space of polynomials of degree at
most $k$ has dimension too low to contain so many orthogonal polynomials.

All that remains to prove is the Delsarte inequalities.  The key observation
is that $P^n_k(\langle x,y \rangle)$ can be written as the inner product of
two vectors in $W_k$ depending only on $x$ and $y$, namely the reproducing
kernels:

\begin{lemma} \label{lemma:Gram}
For all $x,y \in S^{n-1}$ and $k \ge 0$,
\[
P^n_k(\langle x,y \rangle) = \langle w_{k,x},w_{k,y} \rangle.
\]
\end{lemma}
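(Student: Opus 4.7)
The plan is to derive the identity directly from the reproducing property of $w_{k,x}$, applied to $w_{k,y}$ itself. By construction, $w_{k,x} \in W_k$ is the unique element satisfying $f(x) = \langle w_{k,x}, f \rangle$ for every $f \in W_k$. Since $w_{k,y}$ also lies in $W_k$, I would simply substitute $f = w_{k,y}$ into this identity.

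The computation then reads
\[
\langle w_{k,x}, w_{k,y} \rangle = w_{k,y}(x) = P^n_k(\langle y,x \rangle) = P^n_k(\langle x,y \rangle),
\]
where the first equality is the reproducing property, the second is the very definition of $P^n_k$ (applied at the point $x$ to the function $w_{k,y}$), and the third uses symmetry of the Euclidean inner product on $\R^n$.

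There is essentially no obstacle, but two small points deserve a remark. First, the two occurrences of $\langle \cdot,\cdot\rangle$ play different roles: on the left it is the $L^2(S^{n-1})$ inner product, on the right it is the standard inner product on $\R^n$. Second, one should check that the identity is consistent with whichever convention (real or Hermitian) has been fixed for the $L^2$ inner product. Since the spaces $W_k$ admit a real basis of homogeneous harmonic polynomials and the evaluation functional $f \mapsto f(x)$ is real-linear, one may take $w_{k,x}$ to be real-valued, so that $\langle w_{k,x}, w_{k,y} \rangle = \langle w_{k,y}, w_{k,x} \rangle$ and no conjugation issue arises.

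Having established the lemma, the Delsarte positivity $\sum_{x,y \in \mathcal{C}} P^n_k(\langle x,y\rangle) \ge 0$ follows at once, since the double sum equals $\bigl\|\sum_{x \in \mathcal{C}} w_{k,x}\bigr\|^2$ in $L^2(S^{n-1})$; this is the payoff that motivates proving the lemma in this form.
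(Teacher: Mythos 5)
Your proof is correct and follows exactly the paper's argument: substitute $f = w_{k,y}$ into the reproducing property $\langle w_{k,x}, f\rangle = f(x)$ and then use the definition $w_{k,y}(x) = P^n_k(\langle x,y\rangle)$. The extra remarks about the two inner products and the real-valuedness of $w_{k,x}$ are sensible but do not change the route.
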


\begin{proof}
Recall that the reproducing kernel property means $\langle w_{k,x},f \rangle
= f(x)$ for all $f \in W_k$.  In particular, taking $f = w_{k,y}$ yields
$\langle w_{k,x},w_{k,y} \rangle = w_{k,y}(x)$. Now $w_{k,y}(x) =
P^n_k(\langle x,y \rangle)$ implies that
\[
P^n_k(\langle x,y \rangle) = \langle w_{k,x},w_{k,y} \rangle,
\]
as desired.
\end{proof}

\begin{corollary}
For every finite subset $\mC \subset S^{n-1}$ and $k \ge 0$,
\[
\sum_{x,y \in \mC} P^n_k(\langle x,y \rangle) \ge 0.
\]
\end{corollary}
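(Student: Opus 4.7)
The plan is to derive this directly from Lemma \ref{lemma:Gram}, which already expresses each value $P^n_k(\langle x,y\rangle)$ as an inner product in the Hilbert space $W_k$. Once the sum is rewritten as a sum of inner products of reproducing kernels, bilinearity turns it into the squared norm of a single vector, which is manifestly nonnegative. This is the exact analogue of the trick $\bigl|\sum_{x\in\mC} x\bigr|^2 \ge 0$ used to prove optimality of regular simplices in Proposition~\ref{prop:simplex}, but lifted from $\R^n$ to $W_k \subset L^2(S^{n-1})$ via the reproducing kernels.

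Concretely, I would proceed as follows. First, apply Lemma~\ref{lemma:Gram} termwise to rewrite
\[
\sum_{x,y \in \mC} P^n_k(\langle x,y \rangle) = \sum_{x,y \in \mC} \langle w_{k,x}, w_{k,y} \rangle.
\]
Second, use bilinearity (more precisely, sesquilinearity; but the kernels are real-valued so this is immaterial) of the $L^2(S^{n-1})$ inner product to pull the double sum inside, obtaining
\[
\sum_{x,y \in \mC} \langle w_{k,x}, w_{k,y} \rangle = \left\langle \sum_{x \in \mC} w_{k,x}, \sum_{y \in \mC} w_{k,y} \right\rangle = \left\| \sum_{x \in \mC} w_{k,x} \right\|_2^2.
\]
Third, observe that a squared $L^2$ norm is always $\ge 0$, which yields the desired inequality.

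There is essentially no obstacle here, since all the structural work has already been done in building up the reproducing kernels $w_{k,x}$ and verifying in Lemma~\ref{lemma:Gram} that their pairwise inner products recover $P^n_k(\langle x,y\rangle)$. The only thing one might want to emphasize in writing it up is the Gram-matrix interpretation: the $|\mC|\times|\mC|$ matrix with entries $P^n_k(\langle x,y\rangle)$ is the Gram matrix of the finite collection $\{w_{k,x}\}_{x \in \mC}$ in $W_k$, hence positive semidefinite, and summing all its entries (which is what $\sum_{x,y} P^n_k(\langle x,y \rangle)$ does) is the same as testing positive semidefiniteness against the all-ones vector.
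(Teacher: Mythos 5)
Your proof is correct and is essentially identical to the paper's: both apply Lemma~\ref{lemma:Gram} termwise and use bilinearity to collapse the double sum into $\bigl|\sum_{x \in \mC} w_{k,x}\bigr|^2 \ge 0$. The Gram-matrix remark at the end is a nice framing the paper also makes (in the discussion of Schoenberg's theorem), but it is not needed for the proof itself.
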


\begin{proof}
We have
\begin{align*}
\sum_{x,y \in \mC} P^n_k(\langle x,y \rangle)
&= \sum_{x,y \in \mC} \langle w_{k,x},w_{k,y} \rangle\\
&= \left|\sum_{x \in \mC} w_{k,x} \right|^2\\
&\ge 0,
\end{align*}
as desired.
\end{proof}

This argument is a perfect generalization of $\left|\sum_{x \in
\mC}x\right|^2 \ge 0$, except instead of summing the vectors $x$, we are
summing vectors $w_{k,x}$ in the Hilbert space $W_k$.  One interpretation is
that $x \mapsto w_{k,x}$ maps $S^{n-1}$ into a sphere in the
higher-dimensional space $W_k$, and we're combining the trivial inequality
\[
\left|\sum_{x \in \mC} w_{k,x} \right|^2 \ge 0
\]
with that nontrivial mapping.  When $n=2$, the space $W_k$ has dimension $2$
for $k \ge 1$, and so up to scaling we are mapping $S^1$ to itself.  This map
wraps $S^1$ around itself $k$ times, while the analogues for $n \ge 3$ are
more subtle.

It's natural to wonder whether ultraspherical polynomials span all the
functions $P$ satisfying
\[
\sum_{x,y \in \mC} P(\langle x,y \rangle) \ge 0
\]
for all $\mC$.  In fact, they do not.  Pfender has constructed further such
functions and used them to obtain improvements on linear programming bounds
\cite{Pfender2007}.  However, the numerical improvements so far have been
relatively modest.

Instead, Schoenberg proved that the ultraspherical polynomials span the
space of \emph{positive-definite kernels} \cite{Schoenberg1942}, i.e.,
functions $P$ such that for all $x_1,\dots,x_N \in S^{n-1}$, the $N \times
N$ matrix with entries $P(\langle x_i,x_j \rangle)$ is positive
semidefinite.  The reason why ultraspherical polynomials are
positive-definite is Lemma~\ref{lemma:Gram}: the matrix with entries
$\langle w_{k,x_i},w_{k,x_j} \rangle$ is a Gram matrix and is thus positive
semidefinite. Positive-definite kernels play an important role in
representation theory, which contributes to the importance of ultraspherical
polynomials.

As a final comment, everything we have done in this lecture has been
restricted to analyzing pairwise distance distributions.  It's natural to ask
what happens if one looks at triples of points instead of pairs, or even
larger subconfigurations.  The Delsarte inequalities can be generalized to
semidefinite constraints on these higher-order correlation functions, and
thus we can obtain \emph{semidefinite programming bounds}
\cite{Schrijver2005,BachocVallentin2008,deLaatVallentin2013}, which are a
powerful and important extension of linear programming bounds. For reasons
that have not yet been understood, these higher-order bounds seem less
fruitful for obtaining sharp bounds, but several sharp cases are known
\cite{BachocVallentin2009,CohnWoo2012} and others presumably remain to be
discovered.

\lecture{Packing bounds in Euclidean space}

\section{Introduction}

In this lecture we will study linear programming bounds for the sphere
packing problem in Euclidean space.  The basic principles are closely
analogous to those we saw for spherical codes in the fourth lecture.
However, the way the bounds behave in Euclidean space is far more
mysterious.  They almost certainly solve the sphere packing problem in
$\R^8$ and $\R^{24}$, by matching the densities of the $E_8$ and Leech
lattices, but nobody has been able to prove it.

We will focus on the sphere packing problem, rather than energy
minimization. Everything we will do works just as well in the latter case
(see \S9 of \cite{CohnKumar2007}), but sphere packing already illustrates
the essential features of these bounds.

To begin, let's review the statement and proof of linear programming bounds
for spherical codes, i.e., Theorem~\ref{thm:sphercode} from the last
lecture:

\begin{theorem} \label{thm:sphercoderepeat}
Suppose $h = \sum_k h_k P^n_k$ with $h_k \ge 0$ for $k \ge 0$ and $h_0>0$,
and suppose $h(t) \le 0$ for $t \in [-1,\cos \theta]$. Then every code $\mC$
in $S^{n-1}$ with minimal angle at least $\theta$ satisfies
\[
|\mC| \le h(1)/h_0.
\]
\end{theorem}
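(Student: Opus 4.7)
The plan is to squeeze the double sum $\sum_{x,y \in \mC} h(\langle x, y \rangle)$ between two estimates: an upper bound coming from the pointwise hypothesis on $h$, and a lower bound coming from the Delsarte inequalities applied to the ultraspherical expansion. Since the theorem is a verbatim restatement of Theorem~\ref{thm:yudin}'s packing analogue from the previous lecture, the argument should closely mirror that proof, with only the sign conventions and the role of the diagonal adjusted for the packing setting.

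First I would split the double sum into its diagonal and off-diagonal parts, writing
\[
\sum_{x,y \in \mC} h(\langle x,y \rangle) \;=\; |\mC|\, h(1) \;+\; \sum_{\substack{x,y \in \mC \\ x \ne y}} h(\langle x,y \rangle).
\]
The minimal-angle hypothesis forces $\langle x,y \rangle \in [-1,\cos\theta]$ whenever $x \ne y$, and on this interval $h \le 0$ by assumption. So the off-diagonal contribution is nonpositive, and the whole double sum is at most $|\mC|\, h(1)$.

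Next I would substitute the ultraspherical expansion of $h$ and interchange the order of summation to get
\[
\sum_{x,y \in \mC} h(\langle x,y \rangle) \;=\; \sum_{k \ge 0} h_k \sum_{x,y \in \mC} P^n_k(\langle x,y \rangle).
\]
The $k=0$ term contributes exactly $h_0 |\mC|^2$, using $P^n_0 \equiv 1$. For each $k \ge 1$, the Delsarte inequality $\sum_{x,y \in \mC} P^n_k(\langle x,y \rangle) \ge 0$ combines with $h_k \ge 0$ to give a nonnegative summand. Thus the double sum is at least $h_0 |\mC|^2$.

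Comparing the two estimates yields $h_0 |\mC|^2 \le |\mC|\, h(1)$, and dividing by $h_0 |\mC|$ (which is positive, since $h_0 > 0$ and the case $|\mC|=0$ is trivial) produces $|\mC| \le h(1)/h_0$. There is no real obstacle: all the substance is in the Delsarte inequalities built from reproducing kernels in the previous lecture, and the only subtlety worth flagging is ensuring that the diagonal/off-diagonal split is compatible with the sign condition on $h$ — which it is, since the minimal-angle hypothesis places every off-diagonal inner product inside the interval $[-1,\cos\theta]$ where $h$ is known to be nonpositive.
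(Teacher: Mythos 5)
Your proposal is correct and is essentially identical to the paper's proof: the paper compresses the same argument into the single chain of inequalities $|\mC| h(1) \ge \sum_{x,y \in \mC} h(\langle x,y \rangle) = \sum_k h_k \sum_{x,y \in \mC} P^n_k(\langle x,y \rangle) \ge |\mC|^2 h_0$, where the first inequality is your diagonal/off-diagonal split combined with $h \le 0$ on $[-1,\cos\theta]$, and the second is your Delsarte-inequality lower bound with the $k=0$ term isolated. Nothing is missing.
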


\begin{proof}
We have
\[
|\mC| h(1) \ge \sum_{x,y \in \mC} h(\langle x,y \rangle) = \sum_k h_k \sum_{x,y \in \mC} P^n_k(\langle x,y \rangle) \ge |\mC|^2 h_0. \qedhere
\]
\end{proof}

How could we generalize this argument?  First, we need functions on
Euclidean space that can play the same role as ultraspherical polynomials.
In particular, we need an analogue of the positivity property
\[
\sum_{x,y \in \mC} P^n_k(\langle x,y \rangle) \ge 0.
\]
As it turns out, the Euclidean functions are considerably more familiar,
namely exponentials $x \mapsto e^{2\pi i \langle t,x \rangle}$.  If we apply
them to two points via $(x,y) \mapsto e^{2\pi i \langle t,x-y \rangle}$, then
for every finite subset $\mC$ of $\R^n$,
\begin{equation} \label{eq:exppos}
\sum_{x,y \in \mC} e^{2\pi i \langle t,x-y \rangle} = \left|\sum_{x \in \mC} e^{2\pi i \langle t,x \rangle}\right|^2 \ge 0.
\end{equation}
As in the third lecture, these functions have representation-theoretic
origins, but we will not take up that subject here.

In the same way we previously made use of nonnegative linear combinations of
ultraspherical polynomials, we will now need to use nonnegative linear
combinations of exponentials. The natural setting for linear combinations of
exponentials is the Fourier transform. Define the \emph{Fourier transform}
$\widehat{f}$ of an integrable function $f \colon \R^n \to \R$ by
\[
\widehat{f}(t) = \int_{\R^n} f(x) e^{-2\pi i \langle t,x \rangle} \, dx.
\]
If $f$ is continuous and $\widehat{f}$ is integrable as well, then Fourier
inversion tells us that
\[
f(x) = \int_{\R^n} \widehat{f}(t) e^{2 \pi i \langle t,x \rangle} \, dt.
\]
In other words, the Fourier transform $\widehat{f}$ gives the coefficients
needed to express $f$ as a continuous linear combination of exponentials.
Thus, we will be particularly interested in functions $f$ for which
$\widehat{f} \ge 0$.

If $\widehat{f}(t) \ge 0$ for all $t$, then Fourier inversion implies that
\[
\sum_{x,y \in \mC} f(x-y) \ge 0
\]
whenever $\mC$ is a finite subset of $\R^n$, because
\[
\sum_{x,y \in \mC} f(x-y) = \int_{\R^n} \widehat{f}(t) \left|\sum_{x \in \mC} e^{2\pi i \langle t,x \rangle}\right|^2 dt
\]
by \eqref{eq:exppos}.  Thus, functions with nonnegative Fourier transforms
have exactly the property we need to generalize the Delsarte inequalities to
Euclidean space.

However, using these functions to prove sphere packing bounds requires some
finesse.  In the spherical case, we looked at the double sum
\[
\sum_{x,y \in \mC} h(\langle x,y \rangle)
\]
and bounded it on both sides to get
\[
|\mC| h(1) \ge \sum_{x,y \in \mC} h(\langle x,y \rangle) = \sum_k h_k \sum_{x,y \in \mC} P^n_k(\langle x,y \rangle) \ge |\mC|^2 h_0.
\]
In Euclidean space, the corresponding double sum would be
\[
\sum_{x,y \in \mC} f(x-y),
\]
where $\mC$ is a dense sphere packing, or rather the set of sphere centers
in such a packing.  Unfortunately, there's an obvious problem with this
approach: $\mC$ will be infinite and the double sum will diverge.  For
example, if $\mC$ is a lattice, then every term in the sum occurs infinitely
often, because there are infinitely many ways to write each lattice vector
as a difference of lattice vectors.

Can we somehow renormalize the double sum and use it to complete the proof?
The answer is yes if we're careful; see the proof of Theorem~3.3 in
\cite{CohnZhao2014}, which controls the sum over a packing by subtracting a
uniform background distribution of equal density. However, in this lecture
we'll take an arguably more fundamental approach using the Poisson summation
formula.

\section{Poisson summation}

Poisson summation is a remarkable duality between summing a function over a
lattice and summing its Fourier transform over the dual lattice.

We'll take a somewhat cavalier attitude towards analytic technicalities: we
will manipulate sums and integrals however we like, and include enough
hypotheses to justify these manipulations.  Specifically, we will deal with
what we'll call \emph{admissible functions} $f \colon \R^n \to \R$, those for
which $|f(x)| = O\big((1+|x|)^{-n-\varepsilon}\big)$ and $|\widehat{f}(t)| =
O\big((1+|t|)^{-n-\varepsilon}\big)$ for some $\varepsilon>0$.  This decay
rate is fast enough for sums over lattices to converge with room to spare. In
practice, we can simply read ``admissible'' as ``sufficiently rapidly
decreasing and smooth for everything to work.''

\begin{theorem}[Poisson summation]
If $f \colon \R^n \to \R$ is an admissible function and $\Lambda$ is a
lattice in $\R^n$, then
\[
\sum_{x \in \Lambda} f(x) = \frac{1}{\vol(\R^n/\Lambda)} \sum_{t \in \Lambda^*} \widehat{f}(t).
\]
\end{theorem}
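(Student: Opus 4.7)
The plan is to deduce Poisson summation from the Fourier series theory on the torus $\R^n/\Lambda$ that was developed in Section~\ref{sec:fouriertorus}. The central idea is to periodize $f$ to make a function on the torus, expand it as a Fourier series, and evaluate at the origin.

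First I would define the periodization
\[
F(x) = \sum_{y \in \Lambda} f(x+y),
\]
which is manifestly $\Lambda$-periodic, so it descends to a function on $\R^n/\Lambda$. The decay hypothesis $|f(x)| = O\bigl((1+|x|)^{-n-\varepsilon}\bigr)$ is exactly what's needed to guarantee absolute and uniform convergence of this sum on compact sets, so $F$ is continuous. Next, by Section~\ref{sec:fouriertorus}, $F$ has a Fourier expansion
\[
F(x) = \sum_{t \in \Lambda^*} a_t\, e^{2\pi i \langle t,x \rangle},
\]
where the coefficient $a_t$ is obtained by integrating $F$ against the conjugate exponential over a fundamental cell $D$ for $\Lambda$, normalized by $\vol(\R^n/\Lambda)$.

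The key computation is then to unfold this integral: since the translates of $D$ by $\Lambda$ tile $\R^n$ without overlap,
\[
a_t = \frac{1}{\vol(\R^n/\Lambda)} \int_D \sum_{y \in \Lambda} f(x+y)\, e^{-2\pi i \langle t,x \rangle}\, dx
= \frac{1}{\vol(\R^n/\Lambda)} \int_{\R^n} f(x)\, e^{-2\pi i \langle t,x \rangle}\, dx,
\]
using that $e^{-2\pi i \langle t, x+y\rangle} = e^{-2\pi i \langle t,x\rangle}$ for $t \in \Lambda^*$ and $y \in \Lambda$ (this is exactly the defining property of the dual lattice from Exercise~\ref{ex:duallattice}). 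Hence $a_t = \widehat{f}(t)/\vol(\R^n/\Lambda)$. Setting $x=0$ in the Fourier expansion yields the desired identity.

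The main obstacle will be justifying pointwise evaluation of the Fourier series at $x=0$, since Fourier series in general only converge in $L^2$. The decay hypothesis on $\widehat{f}$ saves the day: $|a_t| = O\bigl((1+|t|)^{-n-\varepsilon}\bigr)$ is summable over $\Lambda^*$, so the Fourier series converges absolutely and uniformly, and its sum must equal the continuous function $F$ everywhere. Evaluating both sides at $x=0$ produces
\[
\sum_{y \in \Lambda} f(y) = F(0) = \sum_{t \in \Lambda^*} a_t = \frac{1}{\vol(\R^n/\Lambda)} \sum_{t \in \Lambda^*} \widehat{f}(t),
\]
which is exactly the formula. The one minor technical point to verify is that a continuous periodic function whose Fourier series converges absolutely really does equal the sum of that series pointwise, but this follows from the $L^2$ convergence together with absolute/uniform convergence to a continuous limit, both of which must agree.
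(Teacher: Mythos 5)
Your proposal is correct and follows essentially the same route as the paper's proof: periodize $f$ over $\Lambda$, expand the periodization as a Fourier series on $\R^n/\Lambda$, unfold the coefficient integral over a fundamental domain to identify $a_t = \widehat{f}(t)/\vol(\R^n/\Lambda)$, and evaluate at the origin. Your added care about pointwise convergence of the Fourier series (via the decay of $\widehat{f}$) is a welcome refinement of a point the paper deliberately treats informally.
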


Here $\vol(\R^n/\Lambda)$ is the volume of a fundamental cell of $\Lambda$,
i.e., the determinant of $\Lambda$, and
\[
\Lambda^* = \{t \in \R^n \ |\  \langle x,t \rangle \in \Z \textup{ for all $x \in \Lambda$}\}
\]
is the dual lattice (see Exercise~\ref{ex:duallattice} in the third lecture).

\begin{proof}
The key idea is to prove an even more general formula, by looking at the
Fourier expansion of the periodization of $f$ under $\Lambda$.  Let
\[
F(y) = \sum_{x \in \Lambda} f(x+y),
\]
so that $F$ is periodic modulo $\Lambda$.  We can expand $F$ as a Fourier
series
\[
F(y) = \sum_{t \in \Lambda^*} c_t e^{2\pi i \langle t,y \rangle}
\]
for some coefficients $c_t$, where $\Lambda^*$ occurs because it specifies
the exponentials that are periodic modulo $\Lambda$ (see
\S\ref{sec:fouriertorus} in the third lecture).

Let $D$ be a fundamental domain for $\Lambda$. By orthogonality,
\begin{align*}
c_t &= \frac{1}{\vol(D)}\int_D F(y) e^{-2\pi i \langle t,y \rangle} \, dy\\
&= \frac{1}{\vol(\R^n/\Lambda)} \int_D F(y) e^{-2\pi i \langle t,y \rangle} \, dy\\
&= \frac{1}{\vol(\R^n/\Lambda)} \sum_{x \in \Lambda} \int_D f(x+y) e^{-2\pi i \langle t,y \rangle} \, dy\\
&= \frac{1}{\vol(\R^n/\Lambda)} \sum_{x \in \Lambda} \int_D f(x+y) e^{-2\pi i \langle t,x+y \rangle} \, dy & \text{($x \in \Lambda$ and $t \in \Lambda^*$)}\\
&= \frac{1}{\vol(\R^n/\Lambda)} \int_{\R^n} f(y) e^{-2\pi i \langle t,y \rangle} \, dy  & \text{(translates of $D$ tile $\R^n$)}\\
&= \frac{1}{\vol(\R^n/\Lambda)} \widehat{f}(t).
\end{align*}
In other words, the Fourier coefficients $c_t$ of the periodization of $f$
are simply proportional to $\widehat{f}(t)$, with constant of proportionality
$1/\vol(\R^n/\Lambda)$.

Thus,
\[
\sum_{x \in \Lambda} f(x+y) =  \frac{1}{\vol(\R^n/\Lambda)}\sum_{t \in \Lambda^*} \widehat{f}(t) e^{2\pi i \langle t,y \rangle},
\]
and setting $y=0$ yields Poisson summation.
\end{proof}

The more general formula
\[
\sum_{x \in \Lambda} f(x+y) =  \frac{1}{\vol(\R^n/\Lambda)}\sum_{t \in \Lambda^*} \widehat{f}(t) e^{2\pi i \langle t,y \rangle}
\]
is important in its own right, not just as tool for proving Poisson
summation. At first it looks considerably more general than Poisson
summation, but it is simply Poisson summation applied to the function $x
\mapsto f(x+y)$ in place of $f$.

\section{Linear programming bounds}

We can now state and prove the linear programming bounds for Euclidean
sphere packings.

\begin{theorem}[Cohn and Elkies \cite{CohnElkies2003}] \label{thm:euclidean}
Let $f \colon \R^n \to \R$ be an admissible function with $f(x) \le 0$ for
$|x| \ge 2$, $\widehat{f}(t) \ge 0$ for all $t$, and $\widehat{f}(0)>0$. Then
the sphere packing density in $\R^n$ is at most
\[
\frac{\pi^{n/2}}{(n/2)!} \cdot \frac{f(0)}{\widehat{f}(0)}.
\]
\end{theorem}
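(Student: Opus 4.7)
The plan is to reduce to periodic packings (which we know from the first lecture come arbitrarily close to the optimal density), and then exploit Poisson summation to bound the density of such a packing.

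Without loss of generality I rescale so that the spheres have radius $1$; then every pair of distinct sphere centers is at distance at least $2$. Consider a periodic packing whose centers form the union of $N$ translates $v_1+\Lambda,\dots,v_N+\Lambda$ of a lattice $\Lambda$ in $\R^n$. The density of this packing is $N\cdot\frac{\pi^{n/2}}{(n/2)!}/\vol(\R^n/\Lambda)$, so it suffices to prove
\[
\frac{N}{\vol(\R^n/\Lambda)}\;\le\;\frac{f(0)}{\widehat{f}(0)}.
\]

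The key object is the double sum
\[
S \;=\; \sum_{j,k=1}^{N} \sum_{x\in\Lambda} f\big(x+v_j-v_k\big).
\]
I would bound $S$ from above and below. For the upper bound, note that $x+v_j-v_k$ is a difference of two sphere centers (possibly equal), so either it is the zero vector (which happens exactly $N$ times, namely when $j=k$ and $x=0$) or it has length at least $2$. Using the hypothesis $f(y)\le 0$ for $|y|\ge 2$, every other term is nonpositive, and we obtain $S\le N f(0)$.

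For the lower bound, I apply the translated Poisson summation formula derived in the proof above to each inner sum: for every $j,k$,
\[
\sum_{x\in\Lambda} f(x+v_j-v_k) \;=\; \frac{1}{\vol(\R^n/\Lambda)}\sum_{t\in\Lambda^*}\widehat{f}(t)\, e^{2\pi i\langle t,v_j-v_k\rangle}.
\]
Summing over $j$ and $k$ collapses the exponentials into $\bigl|\sum_j e^{2\pi i\langle t,v_j\rangle}\bigr|^2$, giving
\[
S \;=\; \frac{1}{\vol(\R^n/\Lambda)}\sum_{t\in\Lambda^*}\widehat{f}(t)\,\left|\sum_{j=1}^{N} e^{2\pi i\langle t,v_j\rangle}\right|^2.
\]
Since $\widehat{f}\ge 0$ everywhere by hypothesis, every term is nonnegative, and keeping only the $t=0$ term yields $S\ge N^2\widehat{f}(0)/\vol(\R^n/\Lambda)$. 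Combining the two bounds gives $N f(0)\ge N^2\widehat{f}(0)/\vol(\R^n/\Lambda)$, i.e.\ the desired density bound; admissibility guarantees convergence of every sum and integral in sight.

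The only genuinely subtle point is the passage from arbitrary packings to periodic ones: one must know that periodic packings are dense in the space of packings with respect to density, which was already discussed in the first lecture. The rest is a clean duality between a pointwise sign condition on $f$ (controlling the spatial side) and a pointwise sign condition on $\widehat{f}$ (controlling the spectral side), mediated by Poisson summation, so there is no real obstacle once one writes down the right double sum.
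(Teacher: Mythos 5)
Your proposal is correct and follows essentially the same route as the paper's proof: reduce to periodic packings, form the double sum $\sum_{x,j,k} f(x+v_j-v_k)$, bound it above by $Nf(0)$ using the sign condition on $f$, and below by $N^2\widehat{f}(0)/\vol(\R^n/\Lambda)$ via translated Poisson summation and the sign condition on $\widehat{f}$. The only difference is that you omit the paper's warm-up case of a single lattice, which is purely expository.
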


As before, $(n/2)!$ means $\Gamma(n/2+1)$ when $n$ is odd.  The factor of
$\pi^{n/2}/(n/2)!$ is the volume of a unit ball.  It occurs because we are
looking at packing density, rather than just the number of balls per unit
volume in space.

We will prove Theorem~\ref{thm:euclidean} using Poisson summation
\cite{CohnElkies2003}. Several other proofs are known, but they are longer
\cite{Cohn2002} or more delicate \cite{CohnZhao2014}.  One advantage of the
proof in \cite{CohnZhao2014} is that it weakens the admissibility
hypothesis, so that we can use a more robust space of functions; however, it
obscures when a sharp bound can be obtained.

Before we turn to the proof, let's compare Theorem~\ref{thm:euclidean} with
Theorem~\ref{thm:sphercoderepeat}, its spherical analogue.  One difference
is that the Euclidean case involves a function of $n$ variables, as opposed
to one variable in the spherical case.  However, this difference is
illusory: we might as well radially symmetrize $f$ in the Euclidean case
(since both the hypotheses and the bound are radially symmetric), after
which it becomes a function of one variable.

Table~\ref{tab:dict} gives a dictionary with which these theorems can be
compared. They really are fully analogous, with the biggest discrepancy
being that we use inner products to measure distances in the spherical case
but Euclidean distance in the Euclidean case.

\begin{table}
\caption{A dictionary for comparing linear programming bounds on spheres and in Euclidean space.}
\label{tab:dict}
\begin{center}
\begin{tabular}{lll}
\toprule
space & $S^{n-1}$ & $\R^n$\\
function & $h$ & $f$\\
transform & $h_k$ & $\widehat{f}(t)$\\
balls don't overlap & $t \in [-1,\cos\theta]$ & $|x| \ge 2$\\
value at distance zero & $h(1)$ & $f(0)$\\
bound & $h(1)/h_0$ & $f(0)/\widehat{f}(0)$\\
\bottomrule
\end{tabular}
\end{center}
\end{table}

\begin{proof}
As a warm-up, let's prove the linear programming bounds for lattice
packings. Suppose $\Lambda$ is a lattice packing with unit balls (since we
can specify the packing radius without loss of generality). In other words,
the minimal vector length of the lattice $\Lambda$ is at least $2$.

By Poisson summation,
\[
\sum_{x \in \Lambda} f(x) = \frac{1}{\vol(\R^n/\Lambda)} \sum_{t \in \Lambda^*} \widehat{f}(t).
\]
We will apply the contrasting inequalities $f(x) \le 0$ (for $|x| \ge 2$) and
$\widehat{f}(t) \ge 0$ to this identity. We have
\[
f(0) \ge \sum_{x \in \Lambda} f(x)
\]
because $f(x) \le 0$ for $|x| \ge 2$, while
\[
\sum_{t \in \Lambda^*} \widehat{f}(t) \ge \widehat{f}(0)
\]
because $\widehat{f}(t) \ge 0$ for all $t$.  Thus,
\[
f(0) \ge \frac{\widehat{f}(0)}{\vol(\R^n/\Lambda)}.
\]
The number of balls per unit volume in the packing is $1/\vol(\R^n/\Lambda)$,
and its density is therefore $1/\vol(\R^n/\Lambda)$ times the volume of a
unit ball. Thus, the density is at most
\[
\frac{\pi^{n/2}}{(n/2)!} \cdot \frac{f(0)}{\widehat{f}(0)},
\]
as desired.

So far, we have done nothing but apply the given inequalities to both sides
of Poisson summation. Handling general packings will require a little more
work, but nothing too strenuous.

Without loss of generality, we can restrict our attention to periodic
packings, since they come arbitrarily close to the optimal packing density.
In other words, we can suppose our packing consists of $N$ translates of a
lattice $\Lambda$, namely
\[
\Lambda+y_1,\dots,\Lambda+y_N.
\]
Now the number of balls per unit volume in the packing is
$N/\vol(\R^n/\Lambda)$, and the condition that they should not overlap says
that $|x+y_j-y_k| \ge 2$ for $x \in \Lambda$ as long as $x \ne 0$ or $j \ne
k$.

A little manipulation based on the translated Poisson summation formula
\[
\sum_{x \in \Lambda} f(x+y) =  \frac{1}{\vol(\R^n/\Lambda)}\sum_{t \in \Lambda^*} \widehat{f}(t) e^{2\pi i \langle t,y \rangle}
\]
shows that
\[
\sum_{x \in \Lambda} \sum_{j,k=1}^N f(x+y_j-y_k) =
\frac{1}{\vol(\R^n/\Lambda)} \sum_{t \in \Lambda^*} \widehat{f}(t) \left|\sum_{j=1}^N e^{2\pi i \langle y_j, t \rangle}\right|^2.
\]
The inequalities on $f$ and $\widehat{f}$ show that the left side is at most
$N f(0)$ and the right side is at least $N^2
\widehat{f}(0)/\vol(\R^n/\Lambda)$.  It follows that
\[
\frac{\pi^{n/2}}{(n/2)!} \cdot \frac{N}{\vol(\R^n/\Lambda)} \le \frac{\pi^{n/2}}{(n/2)!} \cdot \frac{f(0)}{\widehat{f}(0)},
\]
which completes the proof.
\end{proof}

In this proof, the inequality
\[
\sum_{x \in \Lambda} \sum_{j,k=1}^N f(x+y_j-y_k)
\ge \frac{N^2 \widehat{f}(0)}{\vol(\R^n/\Lambda)}
\]
for functions satisfying $\widehat{f} \ge 0$ plays the role of the Delsarte
inequalities.  Note that the left side is essentially summing $f$ over the
distance distribution of the packing, but renormalized so that the distances
do not occur infinitely often.

In physics terms \cite[p.~72]{Torquato2002}, this inequality says that the
structure factor (the Fourier transform of $g_2-1$, where $g_2$ is the pair
correlation function) is nonnegative.  The structure factor plays an
important role in the theory of scattering, and its nonnegativity is a
fundamental constraint on the pair correlations that can occur in any
material.

The Poisson summation approach to linear programming bounds generalizes
naturally to the Selberg trace formula \cite{Selberg56,EGM98}. Specifically,
one can use a pretrace formula to prove density bounds in hyperbolic space
\cite{CohnZhao2014}.  However, several things that are known in the
Euclidean case remain mysterious in hyperbolic geometry.  In particular, the
bounds based on the pretrace formula have been proved only for periodic
packings, which are not known to come arbitrarily close to the optimal
density in hyperbolic space, and it is not known how to decrease the
hypotheses on the auxiliary function $f$ along the lines of the proof for
Euclidean space in \cite{CohnZhao2014}.

\section{Optimization and conjectures}

As in the spherical case, nobody knows how to choose the optimal auxiliary
function in the linear programming bounds.  It is not difficult to obtain a
trivial bound as follows:

\begin{exercise} \label{ex:trivbound}
Let $\chi_B$ be the characteristic function of the unit ball centered at the
origin in $\R^n$. Show that the convolution $f = \chi_B * \chi_B$ satisfies
the hypotheses of Theorem~\ref{thm:euclidean} and yields an upper bound of
$1$ for the sphere packing density.
\end{exercise}

Despite the triviality of the bound, this function is of some interest
\cite{Siegel1935}, but better constructions are needed if we are to prove
nontrivial bounds.  See, for example, \S6 of \cite{CohnElkies2003} for
constructions based on Bessel functions.

The behavior of the optimized linear programming bound in $\R^n$ as $n \to
\infty$ is unclear.  Cohn and Zhao \cite{CohnZhao2014} showed that it is at
least as good as the Kabatiansky-Levenshtein bound of $2^{-(0.5990\ldots
+o(1))n}$, while Torquato and Stillinger \cite{TorquatoStillinger2006}
showed that it can be no better than $2^{-(0.7786\ldots+o(1))n}$.  In
particular, it comes nowhere near the density of $2^{-(1+o(1))n}$ attained
by the best sphere packings currently known, although it might come much
closer than the Kabatiansky-Levenshtein bound does. Aside from these
constraints, the asymptotics are a mystery.

We will focus instead on how the bounds behave in low dimensions, by which I
mean ``not tending to infinity'' rather than low in the everyday sense.
Linear programming bounds are nearly the best bounds known in four or more
dimensions (there is a small improvement based on incorporating one more term
from Poisson summation \cite{dLOFV2014}).  As shown in
Figure~\ref{fig:LPgraph}, they are not so far from the truth in eight or
fewer dimensions, but they gradually drift away from the current record
densities in high dimensions.  Note that the jaggedness of the record
densities reflects their subtle dependence on the dimension.

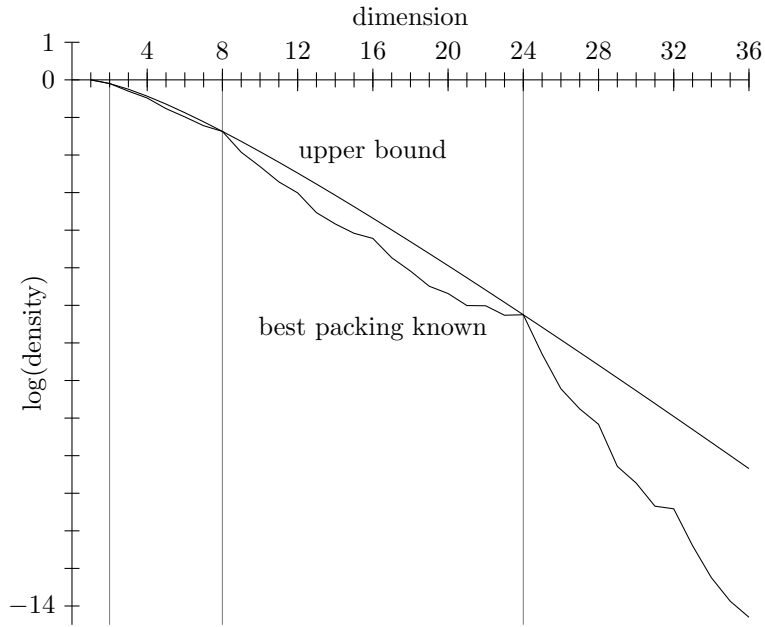
\begin{figure}
\begin{center}
\begin{tikzpicture}
\draw[black!50] (0.5,0)--(0.5,-7.25);
\draw[black!50] (2,0)--(2,-7.25);
\draw[black!50] (6,0)--(6,-7.25);
\foreach \x in {1,2,3,4,5,6,7,8,9} \draw (\x,-0.15)--(\x,0.15);
\foreach \x in {1,2,3,4,5,6,7,8,9,10,11,12,13,14,15,16,17,18,19,20,21,22,23,24,25,26,27,28,29,30,31,32,33,34,35,36} \draw ({0.25*\x},-0.1)--({0.25*\x},0.1);
\foreach \y in {-14,-13,-12,-11,-10,-9,-8,-7,-6,-5,-4,-3,-2,-1,0,1} \draw (-0.1,{0.5*\y})--(0.1,{0.5*\y});
\draw (4.5,0.6) node[above] {dimension};
\draw (-0.5,-3.5) node[rotate=90] {$\log(\text{density})$};
\draw (4,-0.95) node {upper bound};
\draw (4,-3.3) node {best packing known};
\draw (1,0.15) node[above] {$4$};
\draw (2,0.15) node[above] {$8$};
\draw (3,0.15) node[above] {$12$};
\draw (4,0.15) node[above] {$16$};
\draw (5,0.15) node[above] {$20$};
\draw (6,0.15) node[above] {$24$};
\draw (7,0.15) node[above] {$28$};
\draw (8,0.15) node[above] {$32$};
\draw (9,0.15) node[above] {$36$};
\draw (-0.1,0.5) node[left] {$1$};
\draw (-0.1,0) node[left] {$0$};
\draw (-0.1,-7) node[left] {$-14$};
\draw (0,-7.25)--(0,0.5);
\draw (0,0)--(9,0);
\draw (0.25,0)--(0.50000,-0.048862)--(0.75000,-0.12439)--(1.0000,-0.21716)--(1.2500,-0.32220)--(1.5000,-0.43653)
--(1.7500,-0.55820)--(2.0000,-0.68586)--(2.2500,-0.81852)--(2.5000,-0.95543)--(2.7500,-1.0960)--(3.0000,-1.2398)
--(3.2500,-1.3864)--(3.5000,-1.5356)--(3.7500,-1.6871)--(4.0000,-1.8406)--(4.2500,-1.9960)--(4.5000,-2.1531)
--(4.7500,-2.3118)--(5.0000,-2.4719)--(5.2500,-2.6334)--(5.5000,-2.7962)--(5.7500,-2.9602)--(6.0000,-3.1252)
--(6.2500,-3.2913)--(6.5000,-3.4584)--(6.7500,-3.6264)--(7.0000,-3.7952)--(7.2500,-3.9649)--(7.5000,-4.1354)
--(7.7500,-4.3066)--(8.0000,-4.4785)--(8.2500,-4.6510)--(8.5000,-4.8243)--(8.7500,-4.9981)--(9.0000,-5.1725);
\draw (0.25,0)--(0.50000,-0.048853)--(0.75000,-0.15022)--(1.0000,-0.24156)--(1.2500,-0.38257)--(1.5000,-0.49315)
--(1.7500,-0.60989)--(2.0000,-0.68586)--(2.2500,-0.96289)--(2.5000,-1.1532)--(2.7500,-1.3572)--(3.0000,-1.5033)
--(3.2500,-1.7666)--(3.5000,-1.9170)--(3.7500,-2.0415)--(4.0000,-2.1097)--(4.2500,-2.3659)--(4.5000,-2.5442)
--(4.7500,-2.7458)--(5.0000,-2.8428)--(5.2500,-3.0026)--(5.5000,-3.0056)--(5.7500,-3.1316)--(6.0000,-3.1252)
--(6.2500,-3.6488)--(6.5000,-4.1100)--(6.7500,-4.3777)--(7.0000,-4.5825)--(7.2500,-5.1425)--(7.5000,-5.3642)
--(7.7500,-5.6721)--(8.0000,-5.7070)--(8.2500,-6.1973)--(8.5000,-6.6231)--(8.7500,-6.9354)--(9.0000,-7.1499);
\end{tikzpicture}
\end{center}
\caption{The logarithm of sphere packing density as a function of dimension.  The upper curve is the
linear programming bound, while the lower curve is the best packing currently known. Vertical lines mark
conjectured equality above one dimension.}
\label{fig:LPgraph}
\end{figure}

The most remarkable feature of Figure~\ref{fig:LPgraph} is that the curves
appear to touch in eight and twenty-four dimensions.  If true, this would
settle the sphere packing problem in those dimensions, without the
difficulties that plague three dimensions.

\begin{conjecture}[Cohn and Elkies \cite{CohnElkies2003}]
The linear programming bounds for sphere packing density in $\R^n$ are sharp
when $n=2$, $8$, or $24$.
\end{conjecture}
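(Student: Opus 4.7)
The plan is to construct, for each of $n \in \{2, 8, 24\}$, an explicit radial admissible function $f\colon \R^n \to \R$ satisfying the hypotheses of Theorem~\ref{thm:euclidean} with $f(0)/\widehat{f}(0)$ equal to $\vol(\R^n/\Lambda)$ for the target lattice $\Lambda$ (the hexagonal $A_2$, $E_8$, or $\Lambda_{24}$, scaled so that the minimal vectors have length $2$). Inspecting the Poisson-summation proof of Theorem~\ref{thm:euclidean}, sharpness forces every slack in that argument to close: one needs $f(x) = 0$ for every nonzero $x \in \Lambda$ and $\widehat{f}(t) = 0$ for every nonzero $t \in \Lambda^*$. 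Because $E_8$ and $\Lambda_{24}$ are unimodular and $A_2$ is self-dual after rescaling, $f$ and $\widehat{f}$ must vanish on the same infinite discrete sequence of radii $\sqrt{2}, 2, \sqrt{6}, \dots$.

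First I would radially symmetrize and then split $f = f_+ + f_-$ into $\pm 1$ eigenfunctions of the Fourier transform, so that the problem decouples into two one-variable interpolation problems in which each $f_\pm$ must have prescribed double zeros at the lattice norms. The natural tool for such interpolation is modular forms, because the theta function $\Theta_\Lambda(z) = \sum_{x \in \Lambda} e^{\pi i z |x|^2}$ is modular of weight $n/2$ and Poisson summation is precisely its transformation under $z \mapsto -1/z$. Concretely, I would seek $f_\pm$ in the form of a Laplace transform
\[
f_\pm(x) = \int_{0}^{i\infty} \varphi_\pm(z)\, e^{\pi i z |x|^2}\, dz
\]
against weakly holomorphic modular forms $\varphi_\pm$ of appropriate negative weight on $\Gamma_0(N)$ with small $N$, whose $q$-expansions encode the required vanishing and whose behavior under $z \mapsto -1/z$ produces the correct Fourier eigenvalue. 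Matching the leading term at the cusp would simultaneously fix the ratio $f(0)/\widehat{f}(0)$ to the density of $\Lambda$.

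The hard part, by a wide margin, will be verifying the two global sign conditions $f(x) \le 0$ for $|x| \ge 2$ and $\widehat{f}(t) \ge 0$ everywhere. The linear algebra of matching prescribed zeros inside a finite-dimensional space of modular forms is tractable by a dimension count once the right group, weight, and multiplier system are identified, and the Fourier eigenvalue decomposition can be arranged by pairing an Eisenstein piece against a cusp-form piece. Positivity between consecutive prescribed zeros, however, is a genuinely analytic statement about the contour integrals produced by this machinery, and does not follow formally from the modular-form construction. For $n = 2$ the relevant spaces of modular forms are small enough that ad-hoc estimates may close the argument. For $n = 8$ and $n = 24$ this positivity is where any proof must confront a real analytic obstacle: one must produce just the right quasi-modular companions to $\varphi_\pm$ to control the signs of the two integrals simultaneously, and it is precisely here that a genuinely new idea is needed before the conjecture yields.
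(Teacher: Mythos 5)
This statement is a conjecture, not a theorem: the paper offers no proof, and explicitly says that no proof is known (equality has only been verified numerically to fifty decimal places, per \cite{CohnMiller2016}, with a footnote recording that Viazovska later settled the case $n=8$). So there is nothing in the paper to compare your argument against, and the only question is whether your proposal actually closes the problem. It does not. What you have written is a construction strategy that stops exactly at the step that constitutes the entire content of the conjecture. Forcing $f$ and $\widehat{f}$ to vanish at the prescribed radii is the easy part --- the paper itself observes that ``all we have to do is to ensure that $f$ and $\widehat{f}$ have certain roots without developing any unwanted sign changes,'' and that controlling the roots of a function and its Fourier transform simultaneously is precisely what ``seems difficult.'' Your proposal reproduces this reduction and then candidly concedes that the global sign conditions $f(x)\le 0$ for $|x|\ge 2$ and $\widehat{f}\ge 0$ ``do not follow formally'' and that ``a genuinely new idea is needed.'' A proof that ends by announcing the need for a new idea is a research program, not a proof.

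That said, the program you outline --- radial symmetrization, splitting into $\pm 1$ eigenfunctions of the Fourier transform, and realizing each eigenfunction as a Laplace-type transform of a weakly holomorphic modular form whose $q$-expansion encodes the forced zeros and whose modular transformation produces the eigenvalue --- is essentially the route Viazovska ultimately took for $n=8$, so the strategic instinct is sound. Two cautions, though. First, the positivity is not a routine estimate to be bolted on afterward; in the known solution it is built into the integral representation itself (the integrand is arranged to be manifestly of one sign), so the ``new idea'' lives inside the construction of $\varphi_\pm$, not in a separate verification step. Second, your suggestion that $n=2$ should be the easy warm-up case is contradicted by the record: the paper notes that sharpness is not known even for $n=2$, and indeed that case resists the modular-form machinery rather than yielding to it, so small spaces of modular forms do not translate into ad hoc estimates that close the argument.
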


Equality holds to at least fifty decimal places \cite{CohnMiller2016}, but no
proof is known.\footnote{Viazovska \cite{Viazovska2016} has recently proved
the conjecture for $n=8$.} It is furthermore conjectured that the linear
programming bounds for energy are sharp, which would lead to universal
optimality in Euclidean space \cite[\S9]{CohnKumar2007}.

Examining the proof of Theorem~\ref{thm:euclidean} shows that the auxiliary
function $f$ proves a sharp bound for a lattice $\Lambda$ iff $f(x) = 0$ for
all $x \in \Lambda\setminus\{0\}$ and $\widehat{f}(t)=0$ for all $t \in
\Lambda^* \setminus \{0\}$.  In other words, all we have to do is to ensure
that $f$ and $\widehat{f}$ have certain roots without developing any
unwanted sign changes.  That sounds like a manageable problem, but
unfortunately it seems difficult to control the roots of a function and its
Fourier transform simultaneously.

Linear programming bounds seem not to be sharp in $\R^n$ except when $n=1$,
$2$, $8$, or $24$.  We can't rule out the possibility of sharp bounds in
other dimensions, but nobody has been able to identify any plausible
candidates.  The $n=1$ case follows from Exercise~\ref{ex:trivbound}, but
sharpness is not known even for $n=2$, let alone $8$ or $24$.  That makes it
seem all the more mysterious: eight or twenty-four dimensions could be truly
deep, but two dimensions cannot transcend human understanding.

The strongest evidence for the sharpness of these bounds is the numerics,
but there are also analogies with related bounds that are known to be sharp
in these dimensions, such as those for the kissing problem
\cite{Levenshtein1979,OdlyzkoSloane1979}.  It is worth noting that the
kissing bounds are not just sharp, but sharp to an unnecessary degree.
Because the kissing number is an integer, any bound with error less than $1$
could be truncated to the exact answer, but that turns out not to be
necessary. In particular, the bound in Theorem~\ref{thm:sphercoderepeat} is
generally not an integer, but for the kissing problem it miraculously turns
out to be integral in $\R^8$ and $\R^{24}$: it is exactly $240$ in $\R^8$
and exactly $196560$ in $\R^{24}$. This unexpected exactness raises the
question of whether sharp bounds hold also for quantities such as packing
density, where integrality does not apply, and indeed they do seem to.

The computations behind Figure~\ref{fig:LPgraph} are based on numerical
optimization within a restricted class of auxiliary functions. Specifically,
they use functions of the form $f(x) = p(|x|^2) e^{-\pi |x|^2}$, where $p$
is a polynomial of one variable.  Such functions are relatively tractable,
while being dense among all reasonable radial functions.

To carry out explicit computations, it is convenient to write $p$ in terms
of an eigenbasis of the Fourier transform:

\begin{exercise}
Let
\[
\mathcal{P}_k = \left\{\text{functions $x \mapsto p(|x|^2) e^{-\pi |x|^2}$ on $\R^n$}
\ \big|\ \text{$p$ is a polynomial and $\deg(p) \le k$}\right\}.
\]
Prove that $\mathcal{P}_k$ is closed under the Fourier transform.  What are
the eigenvalues of the Fourier transform on $\mathcal{P}_k$?  Show that the
polynomials $p$ corresponding to an eigenbasis are orthogonal with respect to
a certain measure on $[0,\infty)$, and compute that measure.
\end{exercise}

The most general approach to optimizing the linear programming bounds over
$\mathcal{P}_k$ is to impose the sign conditions on $f$ and $\widehat{f}$
via sums of squares and then optimize using semidefinite programming
\cite{Parrilo2003}. This technique will produce the best possible polynomial
$p$ of any given degree.  (Another approach is to force roots for $f$ and
$\widehat{f}$ and then optimize the root locations \cite{CohnElkies2003}.)
However, we cannot obtain a sharp bound by using polynomials, because they
have only finitely many roots.  The best we can do is to approximate the
optimal bound.

By contrast, in the spherical case we can prove sharp bounds using
polynomials.  That is one reason why linear programming bounds are so much
more tractable for spheres than they are in Euclidean space.

Numerical computations have thus far shed little light on the
high-dimensional asymptotics of linear programming bounds. These bounds are
difficult to compute precisely in high dimensions, because such computations
seem to require using high-degree polynomials. Computing the linear
programming bound in $\R^{1000}$ to fifteen decimal places would be an
impressive benchmark, which might be possible but would not be easy.  Even
just a few decimal places would be interesting, as would an order of
magnitude estimate in $\R^{10000}$.

Even without being able to prove that the linear programming bounds are sharp
in $\R^8$ and $\R^{24}$, they can be combined with further arguments to prove
optimality among lattices:

\begin{theorem}[Cohn and Kumar \cite{CohnKumar2009}]
The Leech lattice is the unique densest lattice in $\R^{24}$.
\end{theorem}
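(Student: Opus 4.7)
The plan is to combine the near-sharpness of the Cohn--Elkies linear programming bound in $\R^{24}$ with a local rigidity analysis at the Leech lattice, narrowing down candidates by Mahler compactness. The LP bound of Theorem~\ref{thm:euclidean} comes within a tiny multiplicative factor of the density $\Delta(\Lambda_{24})$, and the idea is to exploit that gap to confine any competing lattice to a small neighborhood of $\Lambda_{24}$, then rule out everything but $\Lambda_{24}$ itself.

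First, I would construct an explicit admissible function $f(x) = p(|x|^2) e^{-\pi |x|^2}$, with $p$ a high-degree polynomial, satisfying the hypotheses of Theorem~\ref{thm:euclidean} and yielding an upper bound of $(1+\epsilon) \Delta(\Lambda_{24})$ for an explicit, very small $\epsilon > 0$. The coefficients would be chosen by numerical semidefinite programming in the polynomial basis of Fourier eigenfunctions, with roots of $f$ placed approximately at the shell radii $\sqrt{4}, \sqrt{6}, \sqrt{8}, \ldots$ of $\Lambda_{24}$ and of $\widehat{f}$ on the dual $\Lambda_{24}^* = \Lambda_{24}$. Since a polynomial has only finitely many zeros, such $f$ cannot be exactly sharp, but the certified $\epsilon$ can be driven extraordinarily small (to many decimal places in practice).

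Second, I would apply Mahler's compactness to confine the competitors: any lattice $\Lambda \subset \R^{24}$ with density at least $\Delta(\Lambda_{24})$ lies, after normalizing the minimum vector length and quotienting by $O(24)$, in a compact subset of the space of $24$-dimensional lattices. Combined with step one, any counterexample has density in the narrow slab $[\Delta(\Lambda_{24}), (1+\epsilon)\Delta(\Lambda_{24})]$, so by compactness it accumulates on the set of density-maximizing lattices, which must include $\Lambda_{24}$.

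The hard part is the quantitative local rigidity: matching the gap $\epsilon$ against an explicit modulus of strict local optimality of $\Lambda_{24}$. The Leech lattice is \emph{extreme} in Voronoi's sense (both perfect and eutactic), so a Hessian calculation at $\Lambda_{24}$ in the space of lattices yields a quadratic lower bound on the density deficit under any nontrivial deformation. Choosing $\epsilon$ sufficiently small forces this quadratic deficit to dominate $\epsilon$ throughout a fixed neighborhood of $\Lambda_{24}$, eliminating all perturbative competitors. Outside that neighborhood, the Mahler-compactum candidates must be handled by structural input: the LP auxiliary function imposes sharp constraints on the shell structure and kissing number via the equality conditions in the proof of Theorem~\ref{thm:euclidean} (any near-extremal $\Lambda$ must have almost all vector pairs at inner products where $f$ nearly vanishes), and combining these with Niemeier's classification of even unimodular $24$-dimensional lattices---of which only $\Lambda_{24}$ has no vectors of norm $2$---reduces the remaining possibilities to a finite, directly checkable list, each strictly less dense than $\Lambda_{24}$. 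The delicacy is in ensuring that the quantitative thresholds of the local Hessian estimate, the LP $\epsilon$, and the structural classification interlock without any uncovered annulus in which an unseen competitor could hide.
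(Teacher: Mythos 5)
Your overall strategy is in fact the one used in the cited proof \cite{CohnKumar2009}, which this paper only states and does not reproduce: an explicit auxiliary function for Theorem~\ref{thm:euclidean} certifying a density bound within a factor $1+\epsilon$ of $\Delta(\Lambda_{24})$ with $\epsilon$ on the order of $10^{-30}$, followed by an analysis of the near-equality conditions to pin down the shell structure of any competitor, an arithmetic step identifying the competitor (after rescaling and rounding its Gram matrix) with the unique even unimodular $24$-dimensional lattice having no norm-$2$ vectors, and finally a quantitative local rigidity argument at $\Lambda_{24}$. So the ingredients you list are the right ones.

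The one step that does not work as written is the Mahler compactness argument. A putative counterexample is a single lattice, not a sequence, so it does not ``accumulate'' on anything; and even for a sequence of lattices with densities tending to the optimum, compactness only yields convergence to \emph{some} density-maximizing lattice --- you cannot conclude it is near $\Lambda_{24}$ without already knowing that $\Lambda_{24}$ is the unique maximizer, which is the theorem. Compactness is also inherently ineffective, so it cannot be matched quantitatively against an explicit $\epsilon$ or an explicit Hessian bound. The actual proof dispenses with compactness entirely: the confinement of competitors to a small neighborhood of $\Lambda_{24}$ is accomplished directly by the LP near-equality conditions (forcing all vector lengths to lie within explicit tiny windows around $\sqrt{2n}$ and forcing the theta-series data), combined with the rounding/unimodularity argument and the characterization of the Leech lattice. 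You mention this mechanism as a secondary ``structural input,'' but it is the load-bearing step; promoting it to replace the compactness argument, and then carrying out the (genuinely delicate) bookkeeping that makes the explicit $\epsilon$, the rounding tolerances, and the local rigidity constant interlock, is what the proof actually consists of.
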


See also the exposition in \cite{CohnKumar2004}.  Aside from $\R^{24}$, the
optimal lattices are known only in up to eight dimensions
\cite{Blichfeldt1935,Vetcinkin1982}.

Cohn and Miller observed that the hypothetical auxiliary functions proving
sharp bounds in $\R^8$ and $\R^{24}$ have additional structure, which has
not yet been explained.  The patterns are prettiest if we rescale the
function $f$ and its input so that $f(0)=\widehat{f}(0)=1$, in which case
the linear programming bounds amount to minimizing the radius $r$ such that
$f(x) \le 0$ for $|x| \ge r$ (see Theorem~3.2 in \cite{CohnElkies2003}).
Then the quadratic Taylor coefficients appear to be rational numbers:

\begin{conjecture}[Cohn and Miller \cite{CohnMiller2016}]
The quadratic Taylor coefficients of the optimal radial functions $f$ and
$\widehat{f}$, normalized as above with $f(0)=\widehat{f}(0)=1$, are
rational numbers when $n=8$ or $n=24$, as shown in
Table~\ref{table:coeffdata}.
\end{conjecture}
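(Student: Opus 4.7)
The plan is to produce, for $n=8$ and $n=24$, an explicit admissible radial function $f$ satisfying the linear programming hypotheses of Theorem~\ref{thm:euclidean} with root structure matching $E_8$ (respectively, $\Lambda_{24}$) and with both Taylor expansions at the origin having rational quadratic coefficients. Because the conjecture asserts both optimality (in the normalization $f(0)=\widehat{f}(0)=1$) and arithmetic structure, I would first address existence and then rationality, recognizing that the existence half is itself the sphere packing problem in these dimensions (settled in $\R^8$ by \cite{Viazovska2016} after this manuscript was written).

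First I would decompose $f = f_+ + f_-$ into eigenfunctions of the Fourier transform with eigenvalues $\pm 1$, so that the problem splits into constructing two radial functions whose zeros on $[r_0,\infty)$ are exactly the norms $\sqrt{2k}$ of the minimal lattice shell and beyond, with prescribed tangency. The natural parameterization is as Laplace transforms of quasimodular forms: writing $f_\pm(x) = \int_0^\infty \varphi_\pm(it)\,e^{-\pi t |x|^2}\,dt$ for modular forms $\varphi_\pm$ of appropriate weight for $\Gamma(2)$, the required zero structure at $|x| = \sqrt{2k}$ translates into interpolation conditions on the Fourier expansion of $\varphi_\pm$ at the cusps. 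The weights are determined by the constraint that the Fourier transform act as $\pm 1$; for $n=8$ the relevant weights are $4$ and $3/2$ (and $12$, $1/2$ for $n=24$), and the forms can be built from Eisenstein series $E_k$ together with theta series of $E_8$ or the Leech lattice.

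Next I would extract the quadratic Taylor coefficient. If $f(x) = 1 + c_2 |x|^2 + O(|x|^4)$, then differentiating $f = \int \varphi(it) e^{-\pi t |x|^2}\,dt$ twice at $x=0$ yields $c_2 = -\pi \int_0^\infty t \,\varphi(it)\,dt$, and similarly for $\widehat{f}$. These period integrals can be evaluated termwise using the $q$-expansion of $\varphi$, producing a sum $\sum_m a_m \cdot m^{-2}$ of rational multiples of $\zeta(2)$ and similar values, together with finitely many boundary contributions from the modular transformation. The quasimodularity of $E_2$ (the weight-$2$ anomaly) introduces an explicit rational correction term. Because the Fourier coefficients $a_m$ come from theta series of $E_8$ or $\Lambda_{24}$, hence are integers, and because the transcendental factors $\pi^{-2}\zeta(2)$ are themselves rational, the result is a rational quadratic coefficient.

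The hard part will be pinning down the correct auxiliary modular form uniquely and proving the cancellation of transcendental periods. Uniqueness of the LP-optimal $f$ is not known a priori, so one must either characterize $f$ by its interpolation data (an analogue of the Radchenko-Viazovska interpolation formula) or show that every optimizer has the same quadratic coefficient by a rigidity argument based on double roots of $f$ and $\widehat{f}$. The cancellation of non-rational periods reduces, via the Eichler-Shimura-style expression of period integrals of weight-$k$ forms in terms of critical $L$-values, to verifying that the specific linear combination appearing in $c_2$ involves only critical values in the range where they are known to be rational multiples of $\pi^{k-1}$. This is ultimately a finite arithmetic check once the modular form is identified; the genuine obstacle is constructing that form unconditionally, which for $n=24$ still remains open.
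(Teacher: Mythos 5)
The statement you are trying to prove is presented in the paper as a \emph{conjecture}, not a theorem: the paper offers no proof at all, only the numerical evidence in Table~\ref{table:coeffdata} (agreement with the proposed rational values to many decimal places, and fifty decimal places for the sharpness of the bound itself). So there is no argument in the paper to compare yours against, and the honest assessment is that your proposal is a research program rather than a proof. To its credit, the program you sketch --- splitting $f$ into $\pm 1$ Fourier eigenfunctions, realizing each as a Laplace transform of a (quasi)modular form for $\Gamma(2)$ or $\Gamma_\theta$ built from Eisenstein series and theta series of $E_8$ or $\Lambda_{24}$, and then evaluating $c_2 = -\pi\int_0^\infty t\,\varphi(it)\,dt$ as a period --- is essentially the strategy that Viazovska introduced for $n=8$ and that was later extended to $n=24$; so the outline is pointed in the right direction.

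But as written, every load-bearing step is open. First, existence: you need the magic function to exist with the correct sign conditions ($f \le 0$ outside the packing radius, $\widehat f \ge 0$ everywhere), which is exactly the sharpness conjecture of Cohn and Elkies; at the time of this paper it was unproven in both dimensions, and your proposal does not supply it. Second, uniqueness: the conjecture is about \emph{the} optimal radial functions, and you correctly note that without a rigidity or interpolation argument it is not even clear that the quadratic coefficient is well defined; you name the needed tool (a Radchenko--Viazovska-type interpolation basis) but do not construct it. Third, rationality: the reduction of $c_2$ to critical $L$-values plus a quasimodular correction from $E_2$ is plausible, but the cancellation of transcendental contributions is asserted, not verified --- and it is precisely the content of the conjecture. (As a minor point, the weights you quote for the eigenfunction constructions do not match the ones that actually occur; the $-1$ eigenfunction in dimension $n$ uses weight $2 - n/2$, not $3/2$ or $1/2$.) In short: you have correctly identified what a proof would have to look like, but you have not produced one, and your own closing paragraph concedes as much.
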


\begin{table}
\caption{Numerically computed Taylor series coefficients of the
hypothetical sphere packing functions in $\R^n$, normalized so $f(0) = \widehat{f}(0)=1$.}
\label{table:coeffdata}
\begin{center}
\begin{tabular}{ccccc}
\toprule
$n$ & function & order & coefficient & conjecture\\
\midrule $8$ & $f$ & $2$ & $-2.7000000000000000000000000000\dots$ & $-27/10$\\
$8$ & $\widehat{f}$ & $2$ & $-1.5000000000000000000000000000\dots$ & $-3/2$\\
$24$ & $f$ & $2$ & $-2.6276556776556776556776556776\dots$ & $-14347/5460$\\
$24$ & $\widehat{f}$ & $2$ & $-1.3141025641025641025641025641\dots$ & $-205/156$\\
$8$ & $f$ & $4$ & $\phantom{-}4.2167501240968298210998965628\dots$ & ?\\
$8$ & $\widehat{f}$ & $4$ & $-1.2397969070295980026220596589\dots$ & ?\\
$24$ & $f$ & $4$ & $\phantom{-}3.8619903167183007758184168473\dots$ & ?\\
$24$ & $\widehat{f}$ & $4$ & $-0.7376727789015322303799539712\dots$ & ?\\
\bottomrule
\end{tabular}
\end{center}
\end{table}

Because $f$ and $\widehat{f}$ are even functions, the odd-degree Taylor
coefficients vanish.  The fourth-degree coefficients shown in
Table~\ref{table:coeffdata} remain unidentified.  If all the coefficients
could be determined, it would open the door to solving the sphere packing
problem in $\R^8$ and $\R^{24}$.

\end{document}